\title{On Gibbs measures and topological solitons of exterior equivariant wave maps}
\author{Bjoern Bringmann}
\date{\today}
\address
{Bjoern Bringmann\\
School of Mathematics\\
Institute for Advanced Study, Princeton, NJ 08540 \& Department of Mathematics, Princeton University, Princeton, NJ 08544}
\email{bjoern@ias.edu}
\begin{document}

\maketitle

\begin{abstract}
We consider $k$-equivariant wave maps from the exterior spatial domain $\mathbb{R}^3\backslash B(0,1)$ into the target $\mathbb{S}^3$. This model has infinitely many topological solitons $Q_{n,k}$, which are indexed by their topological degree $n\in \mathbb{Z}$. For each $n\in \mathbb{Z}$ and  $k\geq 1$, we prove the existence and invariance of a Gibbs measure supported on the homotopy class of $Q_{n,k}$. As a corollary, we obtain that soliton resolution fails for random initial data. Since soliton resolution is known for initial data in the energy space, this reveals a sharp contrast between deterministic and probabilistic perspectives. 
\end{abstract}

\tableofcontents

\section{Introduction}

The wave maps equation is one of the most prominent evolution equations of mathematical physics. We initially consider wave maps $u\colon \R^{1+3}\rightarrow \mathbb{S}^3$, which are critical points of the Lagrangian
\begin{equation}\label{intro:eq-Lagrangian}
\mathcal{L}(u,\partial_t u) := \int_{\mathbb{R}^{1+3}} \dt \dx \, 
\Big( - \big| \partial_t u \big|_g^2 + \sum_{j=1}^3 \big| \partial_{x^j} u \big|^3_g \Big). 
\end{equation}
Here, $g$ denotes the induced Riemannian metric on $\mathbb{S}^3 \subseteq \mathbb{R}^4$. In this article, we are primarily interested in a simplified model for the wave maps $u\colon \R^{1+3}\rightarrow \mathbb{S}^3$, which involves the following two simplifications: 
\begin{enumerate}[label=(\roman*)]
    \item We require that the wave map $u\colon \R^{1+3} \rightarrow \mathbb{S}^3$ is $k$-equivariant, where $k\in \mathbb{N}$. To be precise, we require that 
    \begin{equation}
            u(t,r,\omega) = \Big( \sin\big(\phi(t,r)\big) \Omega_k(\omega), \cos \big( \phi(t,r)\big) \Big), 
    \end{equation}
    where $(r,\omega)\in (0,\infty)\times \mathbb{S}^2$ are polar coordinates on $\mathbb{R}^3$, $\phi \colon \R \times (0,\infty) \rightarrow \R$ is a scalar field, and $\Omega_k \colon \mathbb{S}^2 \rightarrow \mathbb{S}^2$ is a harmonic map with eigenvalue $k (k+1)$. 
    The scalar field $\phi$ describes the angle between the wave map $u$ and the north pole $N=(0,0,0,1)$. 
    \item We replace the spatial domain $\mathbb{R}^3$ with the exterior spatial domain $\mathbb{R}^3 \backslash B(0,1)$ and impose zero Dirichlet boundary conditions. Since this breaks the scaling symmetry of the wave maps equation, it effectively turns the wave maps equation from energy-supercritical into energy-subcritical. 
\end{enumerate}
The resulting initial value problem for the scalar field $\phi=\phi(t,r)$, which is called the exterior $k$-equivariant wave maps equation, can be written as 
\begin{equation}\label{intro:eq-phi}
\begin{cases}
\begin{alignedat}{3}
\partial_t^2 \phi - \partial_r^2 \phi - \frac{2}{r} \partial_r \phi + \frac{k(k+1)}{2r^2} \sin\big( 2 \phi \big) &= 0 \hspace{15ex} &&(t,r) \in \mathbb{R} \times (1,\infty), \\ 
\phi(t,1) &=0 &&t \in \mathbb{R}, \\ 
\big( \phi, \partial_t \phi \big)(0,r) &= \big( \phi_0, \phi_1 \big)(r) &&r \in (1,\infty). 
\end{alignedat}
\end{cases}
\end{equation}
This evolution equation has the conserved energy
\begin{equation}\label{intro:eq-energy}
E_k \big( \phi, \phi_t \big) := \frac{1}{2} \int_1^\infty \dr \, r^2 \Big( (\partial_t \phi)^2 + (\partial_r \phi)^2 + \tfrac{k(k+1)}{r^2} \sin^2 \big( \phi \big)\Big). 
\end{equation}
For any smooth solution of \eqref{intro:eq-phi} with finite energy, there exists an integer $n\in \Z$ such that
\begin{equation}\label{intro:eq-n}
\lim_{r\rightarrow \infty} \phi(t,r) = n \pi
\end{equation}
for all $t\in \R$. Due to the symmetry $\phi \mapsto -\phi$ of \eqref{intro:eq-phi}, we can restrict to the case $n\geq 0$. Since $\phi$ represents the angle between the wave map $u$ and the north pole $N=(0,0,0,1)\in \mathbb{S}^3$,  the nonnegative integer represents the topological degree of the wave map. The energy space of \eqref{intro:eq-energy} can therefore be decomposed into the connected components
\begin{equation}\label{intro:eq-connected}
\Conn_{n,k} := \bigg\{ (\phi_0,\phi_1) \colon 
\int_1^\infty \dr \, r^2 \Big( (\partial_r \phi_0)^2 + \phi_1^2 \Big) <\infty, \, \phi_0(1)=0, \, \lim_{r\rightarrow \infty} \phi_0(r)= n \pi \bigg\}. 
\end{equation}
One of the most interesting features of \eqref{intro:eq-phi} is that each connected component $\Conn_{n,k}$ contains a unique minimizer of the energy $E_k$ given by $(\phi_0,\phi_1)=(Q_{n,k},0)$. The function $Q_{n,k}$ is a harmonic map, i.e., a solution of the stationary equation
\begin{equation}
- \partial_r^2 Q_{n,k} - \frac{2}{r} \partial_r Q_{n,k} + \frac{k(k+1)}{2r^2} \sin\big( 2 Q_{n,k} \big) =0. 
\end{equation}
We emphasize that this is a feature of \emph{exterior} equivariant wave maps, since equivariant wave maps on $\mathbb{R}^{1+3}$ do not have any stationary solutions with finite energy \cite{S88,SS98}. The exterior equivariant wave maps in \eqref{intro:eq-phi} were first introduced in \cite{BSSS92} as an alternative to the Skyrme equation \cite{S61}, which is a different simplification of the wave maps equation. It was further studied analytically and numerically in  \cite{BCM12}, which advertised \eqref{intro:eq-phi} as a model problem for soliton resolution. Soliton resolution was first proven for \eqref{intro:eq-phi} in the case $k=1$ in \cite{KLS14,LS13} and in the general case $k\geq 1$ in \cite{KLLS15} and is recorded in the following theorem. 

\begin{theorem}[\cite{KLLS15}]\label{intro:thm-deterministic}
For any $k\geq 1$, $n\geq 0$, and $(\phi_0,\phi_1)\in \Conn_{n,k}$, there exists a unique global solution $\phi$ of \eqref{intro:eq-phi}. Furthermore, $\phi$ scatters to the soliton $(Q_{n,k},0)$. 
\end{theorem}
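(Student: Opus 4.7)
The natural approach is to follow the Kenig--Merle concentration-compactness / rigidity scheme, adapted to exterior wave equations via the channels-of-energy technique of Duyckaerts--Kenig--Merle. The first move is to reduce to the trivial homotopy class by setting $\psi := \phi - Q_{n,k}$. Since $Q_{n,k}$ satisfies the stationary equation, $\psi$ solves a semilinear wave equation of the schematic form
\begin{equation*}
\partial_t^2 \psi - \partial_r^2 \psi - \tfrac{2}{r} \partial_r \psi + \tfrac{k(k+1)}{r^2} \cos(2 Q_{n,k}) \psi = \mathcal{N}(\psi; Q_{n,k})
\end{equation*}
on $(1,\infty)$ with Dirichlet condition $\psi(t,1)=0$ and $\psi(t,r)\to 0$ as $r\to\infty$. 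The nonlinearity $\mathcal{N}$ contains both a linear potential part (from Taylor-expanding $\sin(2Q_{n,k}+2\psi)$) and a genuinely nonlinear remainder that is at least quadratic in $\psi$ with a $r^{-2}$ factor. The plan is then to prove that every finite-energy $\psi$ of this equation is global and scatters to $0$.

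For local well-posedness and global existence, I would work in the energy space endowed with the distance $\|(\partial_r \psi, \partial_t \psi)\|_{L^2(r^2\,\mathrm{d}r)}$. The exterior constraint $r\geq 1$ eliminates the scaling invariance and the angular potential $k(k+1)/r^2$ is harmless away from the origin, so one expects Strichartz estimates for the free 3d radial wave equation to carry over, yielding local well-posedness. Conservation of the full energy $E_k$ together with the fact that $(Q_{n,k},0)$ is the unique minimizer of $E_k$ on $\Conn_{n,k}$ gives uniform-in-time $H^1\times L^2$ bounds on $\psi$, hence global existence.

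For scattering, I would run the concentration-compactness argument: assume towards contradiction that the set of initial data for which scattering fails is nonempty, and extract a critical solution $\psi_*$ whose trajectory is pre-compact in the energy space (modulo the only remaining symmetry, time translation, since there is no scaling). The rigidity step is then to show $\psi_* \equiv 0$. Here the key tool is the exterior channels-of-energy estimate for the 3d radial wave equation: any nonzero finite-energy radial free wave on $\{r\geq 1\}$ radiates a fixed positive fraction of its energy into the exterior light cone $\{r\geq 1+|t|\}$ as $t\to \pm\infty$. Projecting onto the orthogonal complement of the (at most finite-dimensional) space of stationary solutions of the linearized operator around $Q_{n,k}$, the same channels estimate applies to $\psi_*$, forcing the non-radiating part to be stationary and then, via a variational argument using the minimality of $Q_{n,k}$, to vanish.

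The main obstacle is the rigidity step. Handling the linear potential $k(k+1)r^{-2}\cos(2Q_{n,k})$, which has long-range asymptotics and for $k\geq 2$ is not small, requires a careful analysis of the distorted wave operator $-\partial_r^2 - 2r^{-1}\partial_r + k(k+1)r^{-2}\cos(2Q_{n,k})$, including identification of its threshold resonances and a proof that the exterior-energy lower bound persists for odd (and even) Cauchy data up to a finite-dimensional obstruction. The treatment of even/odd data parity separately, and the identification of the obstruction space as the tangent space to the family of stationary solutions, is exactly where \cite{KLLS15} does the most delicate work.
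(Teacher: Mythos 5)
First, note that Theorem \ref{intro:thm-deterministic} is not proven in this paper at all: it is imported verbatim from \cite{KLLS15} (building on \cite{KLS14,LS13} for $k=1$), so there is no in-paper argument to compare yours against. Judged against the actual proof in those references, your outline correctly identifies the overall architecture: reduction to the trivial homotopy class around $Q_{n,k}$, an energy-subcritical local theory plus energy conservation for global existence, and a concentration-compactness/rigidity scheme whose rigidity step rests on exterior channels-of-energy estimates, with time translation as the only residual symmetry when extracting the critical element.

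Two caveats. The relevant free equation is not the $3$d radial wave: after the substitution $\phi = Q_{n,k}+\varepsilon$ with $u=r\varepsilon$, the linear part $-\partial_r^2 - \tfrac{2}{r}\partial_r + \tfrac{k(k+1)}{r^2}$ is conjugate to the radial Laplacian in dimension $d=2k+3\geq 5$, and in odd dimensions $d\geq 5$ the exterior energy lower bound \emph{fails} for general data; it holds only after projecting off an explicit finite-dimensional resonance space, or separately for data of the form $(f,0)$ and $(0,g)$. You gesture at this (``up to a finite-dimensional obstruction''), but identifying that projection, showing it is compatible with the long-range potential $k(k+1)r^{-2}\cos(2Q_{n,k})$, and closing the rigidity argument is essentially the entire content of \cite{KLLS15}, not a technicality to be deferred. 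Moreover, as written your text is a roadmap rather than a proof: the profile decomposition adapted to the exterior domain, the perturbation theory needed to extract the critical element, and the rigidity step itself are named but not executed. So the proposal is a faithful summary of the strategy of the cited proof, but it does not constitute an independent proof of the theorem.
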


Since the publication of \cite{KLLS15}, there has been much further progress on soliton resolution for equivariant wave maps equations. We particularly highlight the recent breakthrough \cite{JL21}, in which soliton resolution was obtained for two-dimensional equivariant wave maps (on the full spatial domain $\R^2$). \\

Due to Theorem \ref{intro:thm-deterministic}, the deterministic theory of \eqref{intro:eq-phi} is fully understood. In this article, we study \eqref{intro:eq-phi} from a probabilistic perspective, which reveals interesting new aspects. One of the most central directions of research in random dispersive equations, which is inspired by statistical mechanics, concerns the existence and invariance of Gibbs measures. The existence (or construction) of Gibbs measures was initially studied by constructive quantum field theorists (see e.g. the monograph \cite{GJ87}). More recently, it has been studied via stochastic quantization \cite{PW81}, which relies on singular stochastic partial differential equations \cite{AK20,BG18,GH21,MW17,MW20}. 
The invariance of Gibbs measures under dispersive equations was first studied in seminal works of Bourgain \cite{B94} and Zhidkov \cite{Z94}, which treat one-dimensional nonlinear Schrödinger and wave equations, respectively. In recent years, there has also been much progress on invariant Gibbs measures for nonlinear Schrödinger and wave equations in two and three dimensions \cite{B94,B20II,BDNY22,DNY19,DNY20,DNY21,GKO18,OOT21}. 
We emphasize that many of the articles cited above only treat compact domains (such as the periodic box $\mathbb{T}^d$).  Since the exterior equivariant wave maps equation \eqref{intro:eq-phi} is set on the semi-infinite interval $[1,\infty)$, we are interested in the infinite-volume limit of Gibbs measures, which has been considered in \cite{B00,GH21,FO76,MW17,TW+,X14}. 
For a more detailed literature review on the existence and invariance of Gibbs measures, we refer the reader to the introductions of  \cite{GH21} and \cite{B20II,BDNY22}, respectively.  \\

In the following, we study Gibbs measures corresponding to each topological degree $n\geq 0$ and all equivariance-indices $k\geq 1$. Since $\Conn_{n,k}$ from \eqref{intro:eq-connected} is an affine rather than linear space, we first introduce the shift operator $\tau_{n,k}$, which is defined by 
\begin{equation}\label{intro:eq-shift}
\tau_{n,k} \big( \varphi_0, \varphi_1 \big) := \big( Q_{n,k} +\varphi_0, \varphi_1 \big). 
\end{equation}
We then formally define the Gibbs measure $\vec{\mu}_{n,k}$ as the push-forward 
\begin{equation}\label{intro:eq-Gibbs}
\vec{\mu}_{n,k} := \big( \tau_{n,k} \big)_{\#} \vec{\mu}^{\, 0}_{n,k}, 
\end{equation}
where $\vec{\mu}^{\, 0}_{n,k}$ is formally defined by 
\begin{equation}\label{intro:eq-Gibbs-shifted}
`` \, \mathrm{d}\vec{\mu}^{\, 0}_{n,k}( \varphi_0, \varphi_1) = \mathcal{Z}^{-1} \exp \Big( - E_k \big( Q_{n,k} + \varphi_0, \varphi_1 \big) \Big) \mathrm{d}\varphi_0 \mathrm{d}\varphi_1 ".   
\end{equation}
We emphasize that \eqref{intro:eq-Gibbs-shifted} is purely formal, since the energy will later turn out to be infinite on the support of $\vec{\mu}^{\, 0}_{n,k}$ and the infinite-dimensional Lebesgue measure $\mathrm{d}\varphi_0\mathrm{d}\varphi_1$ cannot be defined rigorously. In our main theorem, we prove that the Gibbs measure $\vec{\mu}_{n,k}$ can be constructed rigorously and is invariant under the dynamics of \eqref{intro:eq-phi}. In the following statement, $0<\delta\ll 1$ is a fixed but arbitrary parameter and the weighted Hölder spaces are as in Definition \ref{prelim:def-spaces} below. 

\begin{theorem}\label{intro:thm-main}
For all topological degrees $n\geq 0$ and equivariance-indices  $k\geq 1$, the Gibbs measure $\vec{\mu}_{n,k}$ exists and is supported on the state space
\begin{equation}\label{intro:eq-State}
\begin{aligned}
\State := \bigg\{& \big( \phi_0, \phi_1 \big) \colon 
 r \big( \phi_0- Q_{n,k} \big) \in C_0^{0,1/2-\delta,-1/2-\delta}([1,\infty)), \\ 
 &\,  r \phi_1(r) \in C^{-1,1/2-\delta,-1/2-\delta}([1,\infty)) \bigg\}. 
 \end{aligned}
\end{equation}
Furthermore, the exterior equivariant wave maps equation \eqref{intro:eq-phi} is deterministically globally well-posed on $\State$ and the Gibbs measure $\vec{\mu}_{n,k}$ is invariant under the dynamics. 
\end{theorem}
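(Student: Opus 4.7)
The plan starts with the substitution $\rho := r(\phi_0 - Q_{n,k})$, $\pi := r\phi_1$, which turns the radial Laplacian $-\partial_r^2 - \frac{2}{r}\partial_r$ into $-\frac{1}{r}\partial_r^2$, the weighted measure $r^2\,dr$ into $dr$, and the Dirichlet condition $\phi_0(1)=0$ into $\rho(1)=0$. Expanding $E_k(Q_{n,k}+\varphi_0,\varphi_1)$ around the soliton and eliminating the linear-in-$\varphi_0$ cross terms via integration by parts (using the harmonic map equation for $Q_{n,k}$ together with its decay $Q_{n,k} - n\pi = O(r^{-(k+1)})$ at infinity, coming from the decaying solution of the equation linearized at $n\pi$) yields
\begin{equation*}
E_k(Q_{n,k}+\varphi_0,\varphi_1) - E_k(Q_{n,k},0) = \tfrac{1}{2}\langle L_{n,k}\varphi_0,\varphi_0\rangle + \tfrac{1}{2}\|\varphi_1\|_{L^2(r^2 dr)}^2 + \mathcal{V}(\varphi_0),
\end{equation*}
where $L_{n,k} := -\Delta + k(k+1) r^{-2}\cos(2 Q_{n,k})$ is the linearized operator and $\mathcal{V}$ is a Taylor remainder of the sine-Gordon-type potential. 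I would define the Gaussian base measure $\vec{g}_{n,k}$ with position covariance $L_{n,k}^{-1}$ and velocity covariance the identity on $L^2(r^2\,dr)$; in the reduced variables this is a Brownian-motion-type process $\rho$ pinned at $\rho(1)=0$ together with an independent white noise $\pi$, whose samples have exactly the Hölder regularity $1/2-\delta$ and $r^{1/2+\delta}$ growth encoded by $\State$.

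\textbf{Construction of the Gibbs measure.} Formally $d\vec{\mu}_{n,k}^{\,0} \propto e^{-\mathcal{V}(\varphi_0)}\,d\vec{g}_{n,k}$, and the decisive simplification compared to polynomial Gibbs measures is that $\sin^2$ is bounded, so no Wick-type renormalization is required. The key technical lemma is that $\mathcal{V}$ is $\vec{g}_{n,k}$-almost surely finite, which I would establish via the identity
\begin{equation*}
R(r,\varphi_0) = \cos(2 Q_{n,k})(\sin^2\varphi_0 - \varphi_0^2) + \sin(2 Q_{n,k})\bigl(\tfrac{1}{2}\sin(2\varphi_0) - \varphi_0\bigr),
\end{equation*}
which exhibits the cancellations between individually divergent pieces and yields $|R(r,\varphi_0)| \lesssim |\varphi_0|^3$ whenever $|\varphi_0|\leq 1$ and $|R| \lesssim 1+\varphi_0^2$ otherwise. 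Combined with the typical size $|\varphi_0(r)| \lesssim r^{-1/2+\delta}$, this forces $\int_1^\infty |R|\,dr<\infty$ almost surely. The normalization $Z := \int e^{-\mathcal{V}}\,d\vec{g}_{n,k}\in(0,\infty)$ would then follow from combining the pointwise bound with exponential moment estimates on the a.s.\ finite quantity $\sup_{r\geq 1}|\varphi_0(r)|$. Pushing forward by $\tau_{n,k}$ produces $\vec{\mu}_{n,k}$ as a probability measure on $\State$.

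\textbf{Well-posedness for rough data.} Writing $\phi = Q_{n,k} + \varphi$, equation \eqref{intro:eq-phi} becomes
\begin{equation*}
\partial_t^2\varphi - \Delta\varphi + V_{n,k}\varphi + \mathcal{N}(r,\varphi) = 0, \qquad V_{n,k} = k(k+1) r^{-2}\cos(2 Q_{n,k}),
\end{equation*}
where the nonlinearity $\mathcal{N}$ is pointwise bounded by $O(r^{-2})$ and Lipschitz in $\varphi$ thanks to the sine-Gordon structure. I would apply the Da Prato--Debussche/Bourgain--Burq--Tzvetkov decomposition $\varphi = \Phi + v$, where $\Phi$ is the free evolution of the random data under the linear equation $(\partial_t^2 - \Delta + V_{n,k})\Phi = 0$ (retaining the Brownian-type regularity of the data in $\State$) and $v$ solves a perturbed equation with bounded Lipschitz forcing. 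A standard fixed-point argument in a higher-regularity energy space then gives local well-posedness for $v$, and globalization is deferred to the invariance step via Bourgain's trick.

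\textbf{Invariance and main obstacle.} To close the argument, I would apply Bourgain's invariant-measure scheme: introduce a joint spatial cutoff on $[1,L]$ and a distorted-Fourier frequency cutoff at scale $N$ (both are needed because $L_{n,k}$ has purely absolutely continuous spectrum on the half-line), observe that the resulting finite-dimensional Hamiltonian flow $S_{L,N}(t)$ preserves the truncated Gibbs measure $\vec{\mu}_{n,k}^{\,L,N}$ by Liouville's theorem, and combine uniform-in-$L,N$ measure estimates with $\vec{\mu}_{n,k}$-almost sure convergence $S_{L,N}(t)\to S(t)$ on $\State$ to obtain simultaneously global well-posedness on the support of $\vec{\mu}_{n,k}$ and the invariance of $\vec{\mu}_{n,k}$ under the full flow. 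The principal difficulty I anticipate is the harmonic analysis underlying this truncation step: the continuous spectrum of $L_{n,k}$ and the unbounded spatial domain force one to replace the usual compact-domain arsenal (eigenfunction expansions, $X^{s,b}$ spaces on a torus, Sobolev embeddings) with a distorted Fourier calculus that is quantitatively uniform across all frequency scales and compatible with the unbounded spatial growth of Brownian-type Gaussian objects, all while keeping dependence on the topological degree $n$ and the equivariance index $k$ under control.
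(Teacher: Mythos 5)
Your setup is right — the change of variables $\rho = r(\phi_0-Q_{n,k})$, the Gaussian base measure with covariance $(-\partial_r^2+k(k+1)r^{-2}\cos(2Q_{n,k}))^{-1}$, and the identification of the density as a second-order Taylor remainder of the sine-Gordon potential all match the paper. The genuine gap is in your normalization step. You claim $Z=\int e^{-\mathcal V}\,d\vec g_{n,k}\in(0,\infty)$ follows from the pointwise bound on the remainder together with "exponential moment estimates on the a.s.\ finite quantity $\sup_{r\ge1}|\varphi_0(r)|$." This does not close. The remainder satisfies $|R|\lesssim\min(|\varphi_0|^3,\,1+\varphi_0^2)$, so the best you can extract from the weighted sup-norm $X:=\|r^{1/2-\delta}\varphi_0\|_{L^\infty}$ is $|\mathcal V|\lesssim X^{2+\theta}$ for some small $\theta>0$ (the exponent cannot be pushed below $2$, because for large field values the remainder is genuinely quadratic, coming from the subtracted term $\cos(2Q_{n,k})\varphi_0^2$). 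Since $X$ is only sub-Gaussian, $\mathbb E[e^{CX^{2+\theta}}]=\infty$, and even at the quadratic threshold integrability of $e^{-\mathcal V}$ is a borderline statement that depends on a \emph{quantitative} comparison between the quadratic form hidden in $\mathcal V$ and the Cameron--Martin form of $\vec g_{n,k}$. This is exactly what the paper's Lemma \ref{bounds:lem-exponential} supplies: via the Bou\'e--Dupuis formula, with the Taylor expansion performed around the drift $\zeta$ rather than the Gaussian field, the dangerous term $-q\tfrac{k(k+1)}{2}\int r^{-2}\cos(2Q_{n,k})\zeta^2$ is absorbed into $\tfrac12\int(\partial_r\zeta)^2$ using Hardy's inequality with the sharp constant $4$ (Lemma \ref{prelim:lem-Hardy}) and the coercivity of the linearized operator (Lemma \ref{prelim:lem-soliton}); the resulting constraint \eqref{bounds:eq-exponential-q}, $q<1+\tfrac{1}{4k(k+1)}$, shows how tight the margin is. Your proposal contains no mechanism for this cancellation, and the remark that "no Wick-type renormalization is required" — while true — obscures the fact that the construction is still critical in this quadratic sense.

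On the dynamics your plan is workable but structurally different and heavier than needed. Because the nonlinearity is globally bounded and Lipschitz, the paper obtains \emph{deterministic} global well-posedness on $\State$ directly from d'Alembert's formula and Gronwall (Proposition \ref{dynamics:prop-gwp}); no Da Prato--Debussche decomposition and no Bourgain globalization are used. More importantly, the "principal difficulty" you identify — a distorted Fourier calculus adapted to the continuous spectrum of $L_{n,k}$ on the half-line, uniform in frequency and in $n,k$ — is sidestepped entirely in the paper: the invariance argument first restricts to finite intervals $[1,R]$ (discrete spectrum), uses a finite-dimensional truncation in the \emph{flat} sine basis of $-\partial_r^2$ together with Liouville's theorem (Section \ref{section:invariance-finite}), and then removes the spatial truncation by finite speed of propagation and weak convergence of the finite-volume Gibbs measures (Section \ref{section:invariance-semi-infinite}). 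As written, your route leaves both the exponential integrability and the uniform distorted-Fourier estimates as unresolved obstacles, the first of which is the mathematical core of the theorem.
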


\begin{remark}
Due to the definition of the weighted Hölder spaces (Definition \ref{prelim:def-spaces}), the initial position $\phi_0$ from Theorem \ref{intro:thm-main} satisfies
\begin{equation*}
\big| \phi_0(r) - Q_{n,k}(r) \big| \lesssim_{\phi_0,n,k,\delta} r^{-1/2+\delta}
\end{equation*} 
for all $r\geq 1$. In contrast, if the initial position $\phi_0$ is as in the connected component from \eqref{intro:eq-connected}, then the radial Sobolev embedding implies that
\begin{equation*}
\big| \phi_0(r) - Q_{n,k}(r) \big| \lesssim_{\phi_0,n,k} r^{-1/2}
\end{equation*} 
for all $r\geq 1$. Thus, while the initial data drawn from $\vec{\mu}_{n,k}$ relaxes to the topological soliton as $r\rightarrow \infty$, the pointwise decay rate is slower than for initial data in the energy class. 
\end{remark}

To the best of our knowledge, Theorem \ref{intro:thm-main} is the first result on the existence and invariance of Gibbs measures which are supported near topological solitons. 
The most difficult part of our main theorem is the existence of the Gibbs measure, which is proven in two steps: In the first step, we study a family of Gaussian measures (Section \ref{section:Gaussian}). The corresponding covariance operators are given by the inverses of the one-dimensional Schrödinger operators
\begin{equation}\label{intro:eq-Schroedinger-operator}
-\partial_r^2 + \frac{k(k+1)}{r^2} \cos\big( 2 Q_{n,k}\big), 
\end{equation}
which involve the topological soliton $Q_{n,k}$. In order to obtain growth and Hölder estimates for the family of Gaussian measures, we rely on Green's function estimates for \eqref{intro:eq-Schroedinger-operator}. 

In the second step, we control the Radon-Nikodym derivatives of the Gibbs measures with respect to the Gaussian measures (Section \ref{section:Gibbs}). Our argument relies on the variational approach of Barashkov and Gubinelli \cite{BG18}, which has also been used in \cite{B20I,OOT21}. In contrast to the argument in \cite{BG18}, however, the objective function in the variational problem is expanded around the drift term rather than the Gaussian term (see Remark \ref{Gibbs:rem-expansion}). \\

In comparison to the construction of the Gibbs measure, the proof of the dynamical aspects of Theorem \ref{intro:thm-main} is rather simple (and all ingredients are essentially contained already in \cite{Z94}). The reason is that, as stated in Theorem \ref{intro:thm-main}, \eqref{intro:eq-phi} is deterministically globally well-posed on the state space $\State$, and thus our argument neither relies on the random structure of the solution (as in \cite{B96,B20II,BDNY22,DNY19,OOT21}) nor Bourgain's globalization argument (as in \cite{B94,B96}). The proof of invariance is slightly technical, since it requires a finite-dimensional approximation of \eqref{intro:eq-phi}, but ultimately follows from similar ingredients as in the deterministic well-posedness theory. \\

Theorem \ref{intro:thm-main} has an interesting consequence for the long-time dynamics of certain solutions of \eqref{intro:eq-phi}, which we record in the following corollary. This corollary involves the linearization of \eqref{intro:eq-phi} around the topological soliton $Q_{n,k}$, which is given by
\begin{equation}\label{intro:eq-phi-lin}
\partial_t^2 \phi_{\textup{lin}} - \partial_r^2 \phi_{\textup{lin}} - \frac{2}{r} \partial_r \phi_{\textup{lin}} + \frac{k(k+1)}{r^2} \cos\big( 2Q_{n,k}\big) \phi_{\textup{lin}} =0.
\end{equation}
In light of Theorem \ref{intro:thm-main}, we are particularly interested in \eqref{intro:eq-phi-lin} with initial data in the linear (rather than affine) state space
\begin{equation*}
  \State^{\textup{lin}} := \Big\{ \big( \phi_0, \phi_1 \big) \colon 
 r  \phi_0  \in C_0^{0,1/2-\delta,-1/2-\delta}([1,\infty)), \, 
  r \phi_1(r) \in C^{-1,1/2-\delta,-1/2-\delta}([1,\infty)) \Big\}.  
\end{equation*}
\begin{corollary}\label{intro:cor-failure-soliton}
Let $n\geq 0$ and let $k\geq 1$. Then, soliton resolution for \eqref{intro:eq-phi} fails $\vec{\mu}_{n,k}$-almost surely. More precisely, there exists an event $A_{n,k}\subseteq \State$, where $\State$ is as in \eqref{intro:eq-State}, such that  $\vec{\mu}_{n,k}(A_{n,k})=1$ and such that the following holds for all $(\phi_0,\phi_1)\in A_{n,k}$: \\ 
Let $\phi$ be the unique global solution of \eqref{intro:eq-phi} with initial data $(\phi_0,\phi_1)$. 
Furthermore, let $\phi_{\textup{lin}}^{+}$ and $\phi_{\textup{lin}}^{-}$  be any solutions of the linearized equation \eqref{intro:eq-phi-lin} with initial data in $\State^{\textup{lin}}$. Then, we have that
\begin{equation}\label{intro:eq-failure-soliton}
\limsup_{t\rightarrow \, \pm \infty} 
\big\| r (\phi - Q_{n,k} - \phi_{\textup{lin}}^\pm )(t,r) \big\|_{(C^{0,1/2-\delta,-1/2-\delta} \times C^{-1,1/2-\delta,-1/2-\delta})([1,2])} >0. 
\end{equation}
\end{corollary}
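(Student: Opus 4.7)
The plan is to combine Poincaré recurrence for the $\vec{\mu}_{n,k}$-invariant flow from Theorem~\ref{intro:thm-main} with a local decay property for the linearized flow around $Q_{n,k}$.

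First I would set up the recurrence. By Theorem~\ref{intro:thm-main}, the solution map $\Phi(t)$ of \eqref{intro:eq-phi} is a continuous flow on the Polish space $\State$ preserving the Borel probability measure $\vec{\mu}_{n,k}$. Poincaré recurrence, applied to a countable basis of open balls in $\State$, then yields a set $R_{n,k}\subseteq\State$ with $\vec{\mu}_{n,k}(R_{n,k})=1$ on which every orbit returns to every $\State$-neighborhood of its initial condition along sequences $t_j^\pm\to\pm\infty$. Denote by $\mathcal{N}$ the norm on the left-hand side of \eqref{intro:eq-failure-soliton}. For $(\phi_0,\phi_1)\in R_{n,k}$ and any candidate $\phi_{\textup{lin}}^+$, I would evaluate the hypothetical soliton-resolution identity at $t=t_j^+$; combining $\mathcal{N}((\phi-Q_{n,k}-\phi_{\textup{lin}}^+)(t_j^+),(\partial_t\phi-\partial_t\phi_{\textup{lin}}^+)(t_j^+))\to 0$ with $\Phi(t_j^+)(\phi_0,\phi_1)\to(\phi_0,\phi_1)$ in $\State$, the triangle inequality gives
$$\bigl(\phi_{\textup{lin}}^+(t_j^+),\,\partial_t\phi_{\textup{lin}}^+(t_j^+)\bigr)\;\longrightarrow\;\bigl(\phi_0-Q_{n,k},\,\phi_1\bigr)\quad\text{in }\mathcal{N}.$$

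Next I would exploit the spectral structure of the linearized operator $H_{n,k}:=-\partial_r^2+\tfrac{k(k+1)}{r^2}\cos(2Q_{n,k})$ on $L^2((1,\infty),dr)$ with Dirichlet condition at $r=1$, arising from \eqref{intro:eq-phi-lin} after the Liouville reduction $u=r\phi_{\textup{lin}}$. Since $(Q_{n,k},0)$ is the unique minimizer of $E_k$ in $\Conn_{n,k}$ and the potential is short-range, $H_{n,k}$ is nonnegative and its spectrum splits as $[0,\infty)\cup\sigma_p(H_{n,k})$, with $\sigma_p(H_{n,k})$ a finite (possibly empty) set of strictly positive eigenvalues. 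Let $V_b\subseteq\State^{\textup{lin}}$ be the corresponding finite-dimensional bound-state subspace and $P_c,P_b$ the associated spectral projectors, so every admissible $\phi_{\textup{lin}}^+$ decomposes as $\phi_{\textup{lin}}^+=P_c\phi_{\textup{lin}}^++P_b\phi_{\textup{lin}}^+$. Local energy decay for the half-line wave equation $\partial_t^2 u+H_{n,k}u=0$ yields $\mathcal{N}(P_c\phi_{\textup{lin}}^+(t),\partial_t P_c\phi_{\textup{lin}}^+(t))\to 0$ as $|t|\to\infty$, while $P_b\phi_{\textup{lin}}^+(t)$ is almost periodic and at every time remains in the finite-dimensional subspace $V_b$. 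The display above therefore forces $(\phi_0-Q_{n,k},\phi_1)|_{[1,2]}\in V_b|_{[1,2]}$.

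To conclude, I define
$$A_{n,k}\;:=\;R_{n,k}\,\setminus\,\bigl\{(\phi_0,\phi_1)\in\State:\,(\phi_0-Q_{n,k},\phi_1)|_{[1,2]}\in V_b|_{[1,2]}\bigr\}.$$
Since $\vec{\mu}_{n,k}=(\tau_{n,k})_\#\vec{\mu}^{\,0}_{n,k}$ and $\vec{\mu}^{\,0}_{n,k}$ is absolutely continuous with respect to the non-degenerate Gaussian measure with covariance $H_{n,k}^{-1}$ constructed in Section~\ref{section:Gaussian}, the finite-dimensional subspace excluded above receives zero mass and $\vec{\mu}_{n,k}(A_{n,k})=1$. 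The statement at $-\infty$ follows verbatim using the recurrence times $t_j^-\to-\infty$. The main obstacle will be the dispersive input of the second paragraph: finiteness of $\sigma_p(H_{n,k})$, absence of a threshold resonance at zero energy, and the local energy decay in the weighted Hölder topology of \eqref{intro:eq-State} all require a Jost-function-type resolvent analysis built around the implicitly-defined soliton $Q_{n,k}$. Everything else is a routine mixture of measure theory and the continuous dependence on initial data provided by Theorem~\ref{intro:thm-main}.
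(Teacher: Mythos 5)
Your first paragraph (Poincar\'e recurrence for the invariant flow, evaluated along return times $t_j^\pm$, forcing $(\phi_{\textup{lin}}^\pm(t_j^\pm),\partial_t\phi_{\textup{lin}}^\pm(t_j^\pm))\to(\phi_0-Q_{n,k},\phi_1)$ on $[1,2]$) is sound and is the same starting point as the paper. The gap is in your second paragraph: the claimed local energy decay $\mathcal{N}(P_c\phi_{\textup{lin}}^+(t),\partial_tP_c\phi_{\textup{lin}}^+(t))\to 0$ is \emph{false} for data in $\State^{\textup{lin}}$. Such data decay only like $r^{-1/2+\delta}$ and lie outside the energy space, and the remark immediately following the corollary in the paper points out that the linearized equation \eqref{intro:eq-phi-lin} itself carries an invariant Gaussian measure supported on $\State^{\textup{lin}}$; by the very same Poincar\'e recurrence you invoke for the nonlinear flow, a generic linear solution returns infinitely often to a neighborhood of its own initial data and does not decay even locally. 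Consequently the set of limit points of $(\phi_{\textup{lin}}^+(t_j),\partial_t\phi_{\textup{lin}}^+(t_j))|_{[1,2]}$ is not confined to a finite-dimensional bound-state subspace $V_b|_{[1,2]}$ (indeed it can be essentially anything in the support of the restricted Gaussian measure), so no contradiction is reached and the measure-zero conclusion for $A_{n,k}$ does not follow. Since the corollary allows $\phi_{\textup{lin}}^\pm$ to be \emph{arbitrary} solutions with data in $\State^{\textup{lin}}$, including non-decaying ones, any argument resting on dispersion of the linear profile is structurally blocked; this is precisely the subtlety that required the corollary to be corrected in an earlier version of the manuscript.

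The paper avoids long-time information about the linear flow entirely. It shows that if soliton resolution held, then by the group properties, finite speed of propagation, and the boundedness of $\vec{\Psi}_{\textup{lin}}(-\tau)$ for $\tau\in[0,1/4]$ only, the quantity
\begin{equation*}
\sup_{\tau\in[0,1/4]}\big\|\vec{\Psi}_{\textup{lin}}(-\tau)\vec{\Psi}(\tau)\vec{\Psi}(t)(\psi_0,\psi_1)-\vec{\Psi}(t)(\psi_0,\psi_1)\big\|_{(\Czero\times\Cmone)([1,3/2])}
\end{equation*}
would have to tend to $0$ as $t\to\infty$; the candidate scattering data $(\psi_0^+,\psi_1^+)$ cancel out of this expression. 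On the other hand, almost surely the nonlinearity is not identically zero near $r=1$, so the short-time linear and nonlinear flows genuinely differ at time $\tau$, and Poincar\'e recurrence transports this fixed discrepancy to arbitrarily large times, giving the contradiction. If you want to salvage your approach you would need to replace the dispersive input by an argument of this ``short-time mismatch plus recurrence'' type; as written, the proposal does not close.
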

While \eqref{intro:eq-failure-soliton} is formulated using the same norm as in the definition of the state space $\State$, our argument yields similar conclusions in many other norms (see Remark \ref{proof:rem-generalization}). \\
This corollary is an easy consequence of the properties of the Gibbs measure $\vec{\mu}_{n,k}$ and Poincar\'{e}'s recurrence theorem (see Section \ref{section:proof}). 
The striking aspect of Corollary \ref{intro:cor-failure-soliton} is that soliton resolution fails for certain  $(\phi_0,\phi_1) \in \State$, i.e., the global solution does not decompose into a sum of $Q_{n,k}$ and a linear wave. Since soliton resolution holds for initial data with finite energy (Theorem \ref{intro:thm-deterministic}), this implies that the asymptotic behaviour for random initial data is different from the asymptotic behaviour for smooth initial data.
\begin{remark}
Using the Gaussian measure from Section \ref{section:Gaussian}, we also obtain an invariant Gaussian measure of the linearized wave equation \eqref{intro:eq-phi-lin} which is supported on $\State^{\textup{lin}}$. As a consequence, there exists solutions $\phi_{\textup{lin}}$ of \eqref{intro:eq-phi-lin} with initial data in $\State^{\textup{lin}}$ which do not decay (even locally in space) as time goes to infinity. In light of this, the failure of soliton resolution for initial data in $\State$ may not be too surprising, but it is still interesting that it can be proven. 
\end{remark}

\textbf{Acknowledgements:} The author thanks the anonymous referee and Leonardo Tolomeo for discovering a problem with  Corollary \ref{intro:cor-failure-soliton} in an earlier version of this manuscript. The author thanks  Rowan Killip, Jeremy Marzuola, Igor Rodnianski, and Casey Rodriguez for interesting and helpful discussions. 
The author was partially supported by the NSF under Grant No. DMS-1926686. \\

\section{Preparations} 

In this section, we make necessary preparations for the rest of this article. In Subsection \ref{section:prelim-notation}, we recall basic notation.
In Subsection \ref{section:prelim-analysis} and Subsection \ref{section:prelim-wave}, we recall basic facts from real analysis and the analysis of wave equations, respectively.
In Subsection \ref{section:prelim-change}, we restrict the exterior equivariant wave maps equation \eqref{intro:eq-phi} to finite intervals and introduce a change of variables. Finally, in Subsection \ref{section:prelim-finite-dimensional}, we introduce a finite-dimensional approximation of \eqref{intro:eq-phi}.

\subsection{Notation}\label{section:prelim-notation} 
Let $A,B>0$. We write $A\lesssim B$ if  there exists a constant $C=C(n,k,\delta)>0$ such that $A\leq C B$ is satisfied, where $n$, $k$, and $\delta>0$ are as in Theorem \ref{intro:thm-main}. If the constant $C$ depends on additional parameters, this dependence is indicated through subscripts. For example, if $C$ also depends on $\epsilon>0$, we write $A\lesssim_\epsilon B$. We also write $A\gtrsim B$ if $B\lesssim A$. Finally, we write $A\sim B$ if $A\lesssim B$ and $B\lesssim A$. \\

We further let $R_0=R_0(n,k)\geq 1$ be a sufficiently large radius. In the following, all statements for finite intervals of the form $[1,R]$ will only be made for $R\geq R_0$, which guarantees that the properties from Lemma \ref{prelim:lem-soliton} below are satisfied.

\subsection{Basic facts from analysis} \label{section:prelim-analysis}
In this subsection, we recall a few basic facts from analysis. We first recall the definition of $L^2$-based Sobolev spaces.

\begin{definition}[$L^2$-based Sobolev spaces]\label{prelim:def-Sobolev}
Let $I$ be either the finite interval $[1,R]$, where $R\geq 1$, or the semi-infinite interval $[1,\infty)$. For all smooth, compactly supported $\varphi\colon I \rightarrow \R$, we define the homogeneous norms
\begin{align*}
\big\| \varphi \big\|_{L^2(I)}^2 := \int_I \dr \, |\varphi(r)|^2, 
\quad 
\big\| \varphi \big\|_{\dot{H}^1(I)}^2 := \int_I \dr \, |\partial_r \varphi(r)|^2, 
\quad \text{and} \quad 
\big\| \varphi \big\|_{\dot{H}^2(I)}^2 := \int_I \dr \, |\partial_r^2 \varphi(r)|^2. 
\end{align*}
Furthermore, we define the inhomogeneous norms
\begin{align*}
\big\| \varphi \big\|_{H^1(I)}^2
&:=\big\| \varphi \big\|_{L^2(I)}^2 + \big\| \varphi \big\|_{\dot{H}^1(I)}^2, \\ 
\big\| \varphi \big\|_{H^2(I)}^2
&:=\big\| \varphi \big\|_{L^2(I)}^2 
+ \big\| \varphi \big\|_{\dot{H}^1(I)}^2
+ \big\| \varphi \big\|_{\dot{H}^2(I)}^2.  
\end{align*}
We define the corresponding inhomogeneous function spaces $L^2(I)$, $H^1(I)$, and $H^2(I)$ as the closure of $C^\infty_c(I)$ with respect to the corresponding norms. Furthermore, we define $\dot{H}_0^1(I)$ as the closure of $C^\infty_c(\mathring{I})$, where $\mathring{I}$ is the interior of $I$, with respect to the $\dot{H}^1(I)$-norm. 
\end{definition}

In addition to the $L^2$-based norms, we also work with weighted Hölder norms, which are introduced in the following definition.

\begin{definition}[Weighted Hölder spaces]\label{prelim:def-spaces}
Let $I$ be either the finite interval $[1,R]$, where $R\geq 1$, or the semi-infinite interval $[1,\infty)$, 
let $\kappa \leq 0$, and let $\alpha \in [0,1)$. Then, we
define
    \begin{equation*}
    \big\| \varphi \big\|_{\Czero(I)} 
    := \sup_{r\in I} \big| r^\kappa \varphi(r) \big| 
    + \sup_{\substack{r,\rho \in I\colon \\ r \neq \rho}}
    \bigg| \max(r,\rho)^\kappa \frac{\varphi(r) - \varphi(\rho)}{|r-\rho|^\alpha}\bigg| 
    \end{equation*}
We define the corresponding function space $\Czero(I)$ as the closure of $C^\infty_c(I)$ with respect to the $\Czero(I)$-norm. We also define
\begin{align*}
\Czero_0(I) &:= \Big\{ \varphi \in \Czero(I)\colon \varphi\big|_{\partial I}=0 \Big\}, \\ 
\Czero_{(0)}(I) &:= \Big\{ \varphi \in \Czero(I)\colon \varphi(1)=0 \Big\}. 
\end{align*}
Furthermore, for any locally integrable $\varphi\colon I \rightarrow \R$, we define 
\begin{equation*}
\big\| \varphi \big\|_{\Cmone(I)}
:= \Big\| \int_1^r \drho \, \varphi(\rho) \Big\|_{\Czero(I)}.
\end{equation*}
Finally, we define the corresponding function space $\Cmone(I)$ as the closure of $C^\infty_c(I)$ with respect to the $\Cmone$-norm. 
\end{definition}

We now make a few remarks regarding Definition \ref{prelim:def-spaces}. 
\begin{enumerate}
    \item Since our function spaces (such as $C^{0,\alpha,\kappa}$) are defined as the closure of $C^\infty_c(I)$, our function spaces are slightly different from the usual  Hölder spaces. In particular, all of our function spaces are separable. 
    \item The $\Czero_{(0)}$-spaces, in which the zero Dirichlet boundary condition is only enforced at $r=1$, will be used to compare Gaussian and Gibbs measures defined on different intervals (see e.g. Proposition \ref{Gibbs:prop-Gibbs}). 
    \item The precise definition of the $C^{-1,\alpha,\kappa}$-norm, which contains the integral of $\varphi$, is motivated by d'Alembert's formula (Lemma \ref{prelim:lem-dAlembert}). By using integration by parts, it is easy to see that elements of $C^{-1,\alpha,\kappa}$ are distributions. 
\end{enumerate}

To simplify the notation, we also define the unweighted Hölder norms, i.e., the weighted Hölder norms with $\kappa=0$, by  
\begin{align*}
\big\| \varphi \big\|_{C^{0,\alpha}(I)} := \big\| \varphi \big\|_{C^{0,\alpha,0}(I)} 
\qquad \text{and} \qquad 
\big\| \varphi \big\|_{C^{-1,\alpha}(I)} := \big\| \varphi \big\|_{C^{-1,\alpha,0}(I)}. 
\end{align*}

Finally, we recall a special case of Hardy's inequality.

\begin{lemma}[Hardy's inequality]\label{prelim:lem-Hardy}
For all $R\geq 1$ and all $\zeta \in H^1([1,R])$ satisfying $\zeta(1)=0$, it holds that 
\begin{equation*}
\int_1^R \dr \, \frac{\zeta^2}{r^2} \leq 4 \int_1^R \dr \, (\partial_r \zeta)^2. 
\end{equation*}
\end{lemma}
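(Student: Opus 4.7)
The plan is to prove Hardy's inequality by a standard integration-by-parts argument combined with Cauchy--Schwarz. The key observation is the identity $\partial_r(-1/r) = 1/r^2$, which lets us transfer a derivative from the weight onto $\zeta$ at the cost of a boundary term that is controlled by the Dirichlet condition at $r=1$.

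First I would assume by density that $\zeta \in C^\infty_c(\mathring{I})$ is smooth (actually, by Definition \ref{prelim:def-Sobolev} elements of $H^1([1,R])$ are already limits of such functions, but the condition $\zeta(1)=0$ is preserved only under the weaker density of smooth functions vanishing at $1$; one then passes to the limit using Fatou on the left-hand side). Writing
\begin{equation*}
\int_1^R \dr \, \frac{\zeta(r)^2}{r^2}
= \int_1^R \dr \, \zeta(r)^2 \, \partial_r\!\Big( {-}\frac{1}{r}\Big),
\end{equation*}
I would integrate by parts. This gives
\begin{equation*}
\int_1^R \dr \, \frac{\zeta^2}{r^2}
= -\frac{\zeta(R)^2}{R} + \zeta(1)^2 + 2 \int_1^R \dr \, \frac{\zeta(r)\,\partial_r \zeta(r)}{r}.
\end{equation*}
The boundary term at $r=1$ vanishes by hypothesis, and the boundary term at $r=R$ is non-positive, so the left-hand side is bounded above by $2 \int_1^R \zeta(r)\partial_r \zeta(r)/r \, \dr$.

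Next I would apply Cauchy--Schwarz, splitting the weight as $1/r = (1/r) \cdot 1$:
\begin{equation*}
2 \int_1^R \dr \, \frac{\zeta\,\partial_r \zeta}{r}
\leq 2 \Big( \int_1^R \dr \, \frac{\zeta^2}{r^2} \Big)^{\!1/2} \Big( \int_1^R \dr \, (\partial_r \zeta)^2 \Big)^{\!1/2}.
\end{equation*}
Setting $I := \int_1^R \zeta^2/r^2 \, \dr$ and $J := \int_1^R (\partial_r \zeta)^2 \, \dr$, the two displays together yield $I \leq 2\sqrt{I}\sqrt{J}$, and hence $I \leq 4J$, which is the claimed inequality.

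There is no real obstacle here; the only subtlety is justifying the integration by parts and the finiteness of the intermediate integral $I$. One way to avoid any circular reasoning is to first prove the inequality with the left-hand side replaced by $\int_1^R \zeta^2/\max(r,\epsilon^{-1} )^{-2} \wedge r^{-2} \, \dr$, or more simply to work with $\zeta \in C^\infty([1,R])$ satisfying $\zeta(1)=0$, for which all quantities are manifestly finite, and then conclude in the general case by density using that $C^\infty$-functions vanishing at $r=1$ are dense in $\{\zeta \in H^1([1,R]) \colon \zeta(1)=0\}$.
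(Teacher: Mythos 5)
Your proof is correct. The paper states this lemma as a recalled standard fact and gives no proof of its own, so there is nothing to compare against; your argument (integrate $\zeta^2\,\partial_r(-1/r)$ by parts, drop the nonpositive boundary term at $r=R$, use $\zeta(1)=0$, then apply Cauchy--Schwarz and absorb) is the classical one and yields exactly the constant $4$. One small simplification: since $r\geq 1$ on $[1,R]$, the weight $r^{-2}$ is bounded by $1$, so $\int_1^R \zeta^2/r^2\,\dr \leq \|\zeta\|_{L^2}^2 < \infty$ automatically and your precautions about the finiteness of the intermediate integral are unnecessary; the density reduction to smooth $\zeta$ vanishing at $r=1$ suffices to justify the integration by parts.
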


At the end of this subsection, we introduce extension and restriction operators. 

\begin{definition}[Extension operator]\label{prelim:def-extension}
For any $1\leq R<\infty$ and any $f\colon (1,R) \rightarrow \mathbb{R}$, we define 
$\Ext_R f \colon \R \rightarrow \R$
as the  extension of $f$ which is odd around both $r=1$ and $r=R$. Similarly, for any $f\colon (1,\infty) \rightarrow \R$, we define $\Ext_\infty f \colon \R \rightarrow \R$ as the extension of $f$ which is odd around $r=1$. 
\end{definition}

In the following lemma, we list a few basic properties of the extension operator.

\begin{lemma}[Properties of extension operator]\label{prelim:lem-extension}
For all $1\leq R < \infty$, there exist maps $\ext_R\colon \R \rightarrow [1,R]$ and $\sigma_R \colon \R \rightarrow \{0,1\}$ such that 
\begin{equation*}
\big( \Ext_R f \big) (r) = (-1)^{\sigma_R(r)} f \big( \ext_R(r) \big)
\end{equation*}
for all $f\colon (1,R) \rightarrow \R$. Furthermore, the maps $\ext_R$ and $\sigma_R$ satisfy the following properties: 
\begin{enumerate}[label=(\roman*)]
    \item $\ext_R$ is linear and has slope $\pm 1$ on all intervals of the form $ m \cdot (R-1) + (1,R)$, where $m\in \Z$. 
    \item $\ext_R(r)=r$ for all $r\in (1,R)$. 
    \item $\sigma_R$ is constant on all intervals of the form $ m \cdot (R-1) + (1,R)$, where $m\in \Z$. 
    \item $\sigma_R(r)=0$ for all $r\in (1,R)$. 
\end{enumerate}
With the obvious modifications, the same properties also hold in the semi-infinite case $R=\infty$. 
\end{lemma}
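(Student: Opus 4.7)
The plan is to construct $\ext_R$ and $\sigma_R$ by unwinding the dynamics generated by the two reflections defining the extension. The reflection about $r=1$ is $\rho_1(r) = 2 - r$, that about $r = R$ is $\rho_R(r) = 2R - r$, and their composition $\rho_R \circ \rho_1$ is the translation $r \mapsto r + 2(R-1)$. Together these generate an infinite dihedral group whose fundamental domain is $[1, R]$; the orbit of any $r \in \R$ meets $(1, R)$ in exactly one point (away from a countable set of boundary images), and the parity of the shortest word in $\rho_1, \rho_R$ sending $r$ into $[1, R]$ records the sign attached to $r$ by $\Ext_R$.

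Concretely, for $r \in m \cdot (R-1) + (1, R)$ with $m \in \Z$ I would set
\begin{equation*}
\ext_R(r) := \begin{cases} r - m(R-1), & m \text{ even}, \\ (m+1)(R-1) + 2 - r, & m \text{ odd}, \end{cases} \qquad \sigma_R(r) := m \bmod 2,
\end{equation*}
and assign arbitrary values (say $1$ and $0$) at the countable exceptional set of endpoints. The resulting $\ext_R$ is the standard triangle wave of period $2(R-1)$ mapping $\R$ onto $[1, R]$ with alternating slopes $\pm 1$, and $\sigma_R$ is its parity record. The identity $(\Ext_R f)(r) = (-1)^{\sigma_R(r)} f(\ext_R(r))$ would then be checked by induction on $|m|$: the base case $m = 0$ is the tautology $\ext_R(r) = r$, $\sigma_R(r) = 0$; the induction step uses that applying one further reflection (about $r = 1$ if $m < 0$, about $r = R$ if $m \geq 1$) transports $r$ to the adjacent fundamental domain, flips the sign, and is compatible with the oddness of $\Ext_R f$ about $r = 1$ and $r = R$ built into its definition.

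Properties (i)--(iv) would then follow by direct inspection of the explicit formula: (i) and (iii) because $\ext_R$ is affine of slope $\pm 1$ and $\sigma_R$ is constant on each interval $m \cdot (R-1) + (1, R)$, and (ii) and (iv) because for $m = 0$ the formula collapses to $\ext_R(r) = r$ and $\sigma_R(r) = 0$. The semi-infinite case $R = \infty$ is even simpler, since only $\rho_1$ is present: I would set $\ext_\infty(r) := r$, $\sigma_\infty(r) := 0$ for $r > 1$, and $\ext_\infty(r) := 2 - r$, $\sigma_\infty(r) := 1$ for $r < 1$.

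The main obstacle is essentially absent: this lemma is a bookkeeping statement introducing notation to be reused in later sections rather than a result requiring any substantial analytic input. The only care needed lies in the indexing conventions, namely in ensuring that the integer $m$ from the lemma's statement is aligned with the parity of the dihedral word so that $m = 0$ recovers the identity on $(1, R)$ with $\sigma_R = 0$, as required by (ii) and (iv).
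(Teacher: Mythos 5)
Your proposal is correct and is essentially an explicit write-out of what the paper's one-line proof (``the properties follow directly from the definition of the extension operator'') leaves implicit: the triangle-wave formula for $\ext_R$ and the parity $\sigma_R = m \bmod 2$ are exactly the maps implicit in the doubly-odd extension, and your verification of (i)--(iv) and of the semi-infinite case matches the intended argument. No substantive difference in approach.
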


\begin{proof}
The properties follow directly from the definition of the extension operator.
\end{proof}

\begin{definition}[Restriction operators]\label{prelim:def-restriction} Let $1 \leq L \leq R < \infty$. For any smooth $\varphi \colon [1,R] \rightarrow \R$, we define
$\Rest[L][R] \varphi \colon [1,L] \rightarrow \R$ by 
\begin{equation}\label{prelim:eq-rest}
\Rest[L][R] \varphi := \varphi \big|_{[1,L]}. 
\end{equation}
Furthermore, we define $\RestZ[L][R] \varphi \colon [1,L] \rightarrow \R$ by
\begin{equation*}
\RestZ[L][R]\varphi (r) 
= \begin{cases}
\begin{aligned}
\varphi(r)\hspace{22ex} &\qquad \textup{if} \, \, 1\leq r \leq L-1, \\ 
\varphi(L-1) + \big( r- (L-1) \big) \big( \varphi(L)-\varphi(L-1) \big) 
&\qquad \textup{if} \, \, L-1 \leq r \leq L. 
\end{aligned}
\end{cases}
\end{equation*}
Finally, we define
\begin{equation*}
\vecRest[L][R] := \Rest[L][R] \otimes \Rest[L][R] \qquad \text{and} \qquad 
\vecRestZ[L][R] := \RestZ[L][R] \otimes \Rest[L][R]. 
\end{equation*}
\end{definition}
Throughout this article, we will primarily work with the restriction operator $\Rest[L][R]$. However, it can sometimes be important to maintain the zero Dirichlet boundary conditions, and then $\RestZ[L][R]$ will be used. 

\subsection{Wave equations and solitons} \label{section:prelim-wave}

We now recall properties of the one-dimensional  wave equation on the finite interval $[1,R]$ and semi-infinite interval $[1,\infty)$. We first state d'Alembert's formula, which involves the extension operators from Definition \ref{prelim:def-extension}. 

\begin{lemma}[d'Alembert's formula]\label{prelim:lem-dAlembert}
Let $1\leq R < \infty$, let $f\in C^\infty_c((1,R))$, let $g\in C^\infty([1,R])$, and let $h\in C^\infty(\R\times [1,R])$. Then, the unique solution of the initial-boundary value problem 
\begin{equation}
\begin{cases}
\begin{alignedat}{3}
\partial_t^2 u - \partial_r^2 u &= h \hspace{10ex} (&&t,r) \in \R \times (1,R), \\ 
u(t,1)=u(t,R)&=0 &&t\in \R, \\ 
u(0,r)=f(r), \quad u_t(0,r)&=g(r) &&r\in (1,R)
\end{alignedat}
\end{cases}
\end{equation}
is given by 
\begin{equation}
\begin{aligned}
u(t,r) &= \frac{(\Ext_R f)(r+t) + (\Ext_R f)(r-t)}{2} + \frac{1}{2} \int_{r-t}^{r+t} \drho \,  (\Ext_R g)(\rho) \\
&+ \frac{1}{2} \int_{0}^t \ds \int_{r-(t-s)}^{r+(t-s)} \drho \, (\Ext_R h)(s,\rho). 
\end{aligned}
\end{equation}
With the obvious modifications, the same formula also holds in the semi-infinite case $R=\infty$. 
\end{lemma}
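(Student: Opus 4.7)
The plan is to reduce to the classical d'Alembert formula on the whole line $\R$ by the method of images, exploiting the oddness of the extensions about both endpoints $r=1$ and $r=R$ to enforce the Dirichlet boundary conditions.

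First I would define $\tilde{u}\colon \R\times\R\to\R$ to be the right-hand side of the claimed formula, but now interpreted for all $r\in \R$ rather than restricted to $[1,R]$. Writing $\tilde{f} = \Ext_R f$, $\tilde{g} = \Ext_R g$, and $\tilde{h} = \Ext_R h$, the classical whole-line d'Alembert formula shows that $\tilde{u}$ is the unique classical solution of $\partial_t^2 \tilde{u} - \partial_r^2 \tilde{u} = \tilde{h}$ on $\R\times\R$ with initial data $\tilde{u}(0,\cdot) = \tilde{f}$ and $\partial_t \tilde{u}(0,\cdot) = \tilde{g}$. Restricting $\tilde{u}$ to $r\in(1,R)$ and using Lemma \ref{prelim:lem-extension}(ii), which asserts that $\Ext_R$ is the identity on $(1,R)$, shows that $u := \tilde{u}|_{\R\times[1,R]}$ solves the inhomogeneous wave equation with source $h$ and initial data $(f,g)$ as required.

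Next I would verify the Dirichlet conditions $u(t,1) = u(t,R) = 0$. By Definition \ref{prelim:def-extension}, each of $\tilde{f}$, $\tilde{g}$, and $\tilde{h}(s,\cdot)$ is odd about both $r=1$ and $r=R$. At $r=1$, this immediately gives $\tilde{f}(1+t) + \tilde{f}(1-t) = 0$; moreover, the integrals $\int_{1-t}^{1+t}\tilde{g}(\rho)\,d\rho$ and $\int_{1-(t-s)}^{1+(t-s)}\tilde{h}(s,\rho)\,d\rho$ vanish since the integrands are odd about the midpoint of the symmetric integration interval. The same argument applied at $r=R$ closes the verification. Uniqueness is then a standard energy estimate: if $w = u_1 - u_2$ solves the homogeneous equation with trivial initial and boundary data, then $\tfrac{d}{dt}\tfrac{1}{2}\int_1^R (w_t^2 + w_r^2)\,dr = [w_t w_r]_{r=1}^{R}$, which vanishes since $w(t,1) = w(t,R) = 0$ forces $\partial_t w = 0$ at the endpoints. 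Hence the energy remains zero, forcing $w \equiv 0$.

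The semi-infinite case $R=\infty$ is handled identically, with only the oddness about $r=1$ being needed, and all integrals are well-defined since they remain over bounded intervals in $\rho$. The main obstacle, to the extent there is one, is purely notational: one must ensure that the odd-reflection structure recorded in Lemma \ref{prelim:lem-extension} is precisely aligned with the endpoints of the integration intervals appearing in the formula, so that the boundary symmetries genuinely kill all three terms simultaneously. Once this bookkeeping is handled, the entire statement reduces to the classical whole-line theory.
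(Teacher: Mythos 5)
Your argument is correct and is precisely the standard method-of-images proof that the paper leaves implicit (the lemma is stated without proof, as a classical fact), with the reduction to the whole-line d'Alembert formula, the oddness of $\Ext_R f$, $\Ext_R g$, $\Ext_R h$ about $r=1$ and $r=R$ killing all three terms at the endpoints, and uniqueness via the energy identity. The only caveat, which the lemma's own hypotheses already share, is that $\Ext_R g$ and $\Ext_R h$ are genuinely continuous across the reflection points only under compatibility conditions such as $g(1)=g(R)=0$; absent these, the formula defines the solution in the usual distributional sense, which is how it is used later in the paper.
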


In order to simplify the notation, we make the definition
\begin{equation*}
\Duh_R \big[ h \big] 
:=  \frac{1}{2} \int_{0}^t \ds \int_{r-(t-s)}^{r+(t-s)} \drho \, (\Ext_R h)(s,\rho).
\end{equation*}

We now state a precise definition of the topological solitons $Q_{n,k}$, which were informally introduced 
in the introduction.

\begin{definition}[Topological solitons \cite{BCM12,LS13}]\label{prelim:def-soliton}
For all $n\geq 0$ and $k\geq 1$, we define $Q_{n,k}$ as the unique minimizer of 
\begin{equation}
\frac{1}{2} \int_1^\infty \dr \, r^2 \Big( (\partial_r \phi)^2 + \frac{k(k+1)}{r^2} \sin^2\big( \phi \big) \Big) 
\end{equation}
subject to the boundary conditions $\phi(1)=0$ and $\lim_{r\rightarrow \infty} \phi(r) = n\pi$. For notational purposes, it is convenient
to also define $Q_{0,0}(r) \equiv 0$.
\end{definition}

The case $n=k=0$ will only be needed in the definition and analysis of the white noise measure (Definition \ref{Gaussian:def-white-noise}). In the following lemma, we recall
basic properties of the topological solitons.

\begin{lemma}[\protect{Topological solitons \cite{LS13,KLLS15}}]\label{prelim:lem-soliton}
For all $n\geq 0$ and $k\geq 1$, there exists an $\alpha=\alpha_{n,k}\in \mathbb{R}$ such that
\begin{equation*}
\Big| Q_{n,k}(r) - \Big( n \pi - \frac{\alpha}{r^{k+1}} \Big) \Big| \lesssim_{n,k} r^{-3(k+1)}
\end{equation*}
is satisfied for all $r\geq 1$. Furthermore, there exists a constant $c_{n,k}>0$ such that 
\begin{equation*}
\int_1^R \dr \, \psi \, \big( - \partial_r^2 + \tfrac{k(k+1)}{2r^2} \cos\big( 2 Q_{n,k}\big) \big) \,  \psi \geq c_{n,k} \int_1^R \dr \, |\partial_r \psi|^2 
\end{equation*}
for all $R\geq R_0$ and all $\psi \in \dot{H}^1_0([1,R])$. 
\end{lemma}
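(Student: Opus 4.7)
The two parts of the lemma are essentially independent. The first is an asymptotic ODE analysis at infinity, while the second reduces, somewhat cleanly, to an algebraic consequence of the minimality of $Q_{n,k}$ together with the specific coefficient $1/2$ in the potential.

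\emph{Part 1 (asymptotic expansion).} I would set $\psi(r) := Q_{n,k}(r) - n\pi$. Since $\sin(2(n\pi + \psi)) = \sin(2\psi)$, the shifted profile $\psi$ satisfies the same stationary ODE as $Q_{n,k}$, with the boundary condition $\psi(\infty) = 0$. Expanding $\sin(2\psi) = 2\psi - \tfrac{4}{3}\psi^3 + O(\psi^5)$, the linearization at $\psi = 0$ is the Euler-type equation
\begin{equation*}
-\psi'' - \tfrac{2}{r}\psi' + \tfrac{k(k+1)}{r^2}\psi \;=\; 0,
\end{equation*}
whose indicial exponents are $k$ and $-(k+1)$. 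Decay at infinity selects the $r^{-(k+1)}$ branch, so $\psi(r) = -\alpha\,r^{-(k+1)} + w(r)$ for some $\alpha = \alpha_{n,k} \in \R$ with $w = o(r^{-(k+1)})$. Substituting back, the remainder $w$ satisfies the same linear Euler equation with forcing of size $O(\psi^3/r^2) = O(r^{-3(k+1)-2})$; inverting by variation of parameters in the fundamental pair $\{r^k, r^{-(k+1)}\}$ yields $w(r) = O(r^{-3(k+1)})$. I would make this rigorous via a contraction mapping in the weighted space $\{w \colon \sup_{r \geq 1} r^{3(k+1)}|w(r)| < \infty\}$.

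\emph{Part 2 (coercivity).} Introduce the auxiliary operator $L_0 := -\partial_r^2 + (k(k+1)/r^2)\cos(2Q_{n,k})$, so that the operator in the lemma satisfies the algebraic identity $L = \tfrac{1}{2}(-\partial_r^2 + L_0)$. Any $\psi \in \dot{H}_0^1([1,R])$, extended by zero to $[1,\infty)$, defines a compactly supported perturbation $\delta\phi := \psi/r$ of $Q_{n,k}$ within $\Conn_{n,k}$ (the boundary conditions at $r=1$ and at infinity are preserved, and the energy of the perturbation is finite by Hardy's inequality). Computing the second variation of $E_k$ at the minimizer and performing the substitution $\psi = r\,\delta\phi$, together with two integration by parts using $\psi(1) = \psi(R) = 0$, gives
\begin{equation*}
0 \;\leq\; \delta^2 E_k[Q_{n,k}](\delta\phi,\delta\phi) \;=\; \int_1^R \dr \, \psi \, L_0 \psi .
\end{equation*}
Combining this with the identity $L = \tfrac{1}{2}(-\partial_r^2 + L_0)$ and another integration by parts yields
\begin{equation*}
\int_1^R \dr \, \psi \, L \psi \;=\; \tfrac{1}{2} \int_1^R \dr \, (\partial_r \psi)^2 + \tfrac{1}{2}\int_1^R \dr \, \psi L_0 \psi \;\geq\; \tfrac{1}{2}\int_1^R \dr \, (\partial_r \psi)^2,
\end{equation*}
which is the claimed inequality with $c_{n,k} = 1/2$.

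\emph{Main obstacle.} Because Part 2 reduces to the algebraic identity above and the mere nonnegativity of the second variation, the more substantive technical step is the asymptotic expansion in Part 1. The challenge there is to extract two full powers of $r^{-(k+1)}$ from the cubic nonlinearity. This requires uniform control of the tail $\sin(2\psi) - 2\psi + \tfrac{4}{3}\psi^3$ and sharp decay bounds on the Green's function of the Euler operator, both of which are standard ODE ingredients but must be balanced carefully inside the fixed-point contraction in the weighted norm.
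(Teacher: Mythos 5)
The paper offers no proof of this lemma at all — it is recalled verbatim from \cite{LS13,KLLS15} — so the only question is whether your argument is sound, and it essentially is. Your Part 2 is a genuinely more elementary route than the cited references: the identity $-\partial_r^2 + \tfrac{k(k+1)}{2r^2}\cos(2Q_{n,k}) = \tfrac12\big(-\partial_r^2\big) + \tfrac12 A_{n,k,R}$ combined with the nonnegativity of the second variation of $E_k$ at the minimizer (your computation $\int_1^R r^2(\partial_r(\psi/r))^2 = \int_1^R(\partial_r\psi)^2$ after the boundary terms vanish is correct) immediately gives the inequality with $c_{n,k}=1/2$, with no spectral analysis. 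Be aware, though, of what this does \emph{not} give: the references actually establish coercivity of the full linearized operator $A_{n,k,R}$ (no factor $1/2$), which requires ruling out zero-energy resonances and is not a consequence of minimality alone; several later steps in the paper (e.g.\ the claim $A_{n,k,R}\succeq c_{n,k}A_{0,k,R}$ in the Green's function estimates, and the positive definiteness of $A_{n,k,R}$) lean on that stronger version rather than on the literal statement you prove. So your proof matches the statement as written, but one should not read it as a substitute for the quantitative nondegeneracy proved in \cite{LS13,KLLS15}.

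Two smaller points on Part 1. First, "decay at infinity selects the $r^{-(k+1)}$ branch" needs an a priori input before the contraction can be run: from finite energy one gets $|Q_{n,k}(r)-n\pi|\lesssim r^{-1/2}$ by Cauchy--Schwarz, and one must bootstrap this (via the integral equation on $[r_0,\infty)$ for $r_0$ large) up to the rate $r^{-(k+1)}$ before the cubic error is small enough to close the fixed point in the weighted space; you gesture at this but it is the one place where the sketch hides real work. Second, your bookkeeping is otherwise consistent: the indicial roots $k$ and $-(k+1)$, the cubic error $O(r^{-2}\psi^3)=O(r^{-3(k+1)-2})$, and the variation-of-parameters output $O(r^{-3(k+1)})$ (the Wronskian of $r^k, r^{-(k+1)}$ in the self-adjoint form $-(r^2w')'+k(k+1)w$ is the constant $2k+1$) all check out.
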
 

\subsection{Restriction to finite intervals and change of variables}\label{section:prelim-change}
In order to rigorously construct the Gibbs measures, we first need to replace the infinite interval in \eqref{intro:eq-phi} by a finite interval. To this end, we let $R\geq R_0$. We then consider
\begin{equation}\label{prelim:eq-phi-R} 
\begin{cases}
\begin{alignedat}{3}
\partial_t^2 \phi_R - \partial_r^2 \phi_R - \frac{2}{r} \partial_r \phi_R + \frac{k(k+1)}{2r^2} \sin\big( 2 \phi_R \big) &= 0 \hspace{20ex} &&(t,r) \in \mathbb{R} \times (1,R), \\ 
\phi_R(t,1) &=0  &&t \in \mathbb{R}, \\ 
\phi_R(t,R) &=Q_{n,k}(R) &&t \in \mathbb{R}, \\ 
\big( \phi_R, \partial_t \phi_R \big)(0,r) &= \big( \phi_{R,0}, \phi_{R,1} \big)(r) &&r \in (1,R). 
\end{alignedat}
\end{cases}
\end{equation}
In \eqref{prelim:eq-phi-R}, we impose the Dirichlet condition $\phi_R(t,R)=Q_{n,k}(R)$, which will guarantee that the limit of $\phi_R$ as $R\rightarrow \infty$ lies in the same homotopy class as $Q_{n,k}$. 
In order for \eqref{prelim:eq-phi-R} to be consistent at $r=R$, we also require that the initial data satisfies $\phi_{0,R}(R)=Q_{n,k}(R)$. The initial-boundary value problem \eqref{prelim:eq-phi-R} has the conserved energy
\begin{equation}\label{prelim:eq-energy}
E_{k,R}\big( \phi_R, \partial_t \phi_R \big) = 
\frac{1}{2} \int_1^R \dr \, r^2 \Big( (\partial_t \phi_R)^2 + (\partial_r \phi_R)^2 + \tfrac{k(k+1)}{r^2} \sin^2 \big( \phi_R \big) \Big). 
\end{equation}
We now introduce a change of variables which separates the topological soliton $Q_{n,k}$ and converts the variable-coefficient operator $\partial_r^2 + 2 r^{-1} \partial_r$ into $\partial_r^2$. To be precise, we write
\begin{equation}\label{prelim:eq-change-of-variables}
\phi_R = Q_{n,k} + r^{-1} \psi_R. 
\end{equation}
The new unknown $\psi_R$ is a solution of the initial-boundary value problem
\begin{equation}\label{prelim:eq-psi}
\begin{cases}
\begin{alignedat}{3}
\partial_t^2 \psi_R - \partial_r^2 \psi_R &= - r^{-1} \Nl\big( r^{-1} \psi_R \big) \hspace{9.5ex} &&(t,r) \in \R \times (1,R), \\ 
\psi_R(t,1) &=0 &&t \in \mathbb{R}, \\ 
\psi_R(t,R) &=0 &&t \in \mathbb{R}, \\ 
\big( \psi_R, \partial_t \psi_R \big)(0,r) &= r \big( \phi_{R,0}-Q_{n,k}, \phi_{R,1} \big)(r) &&r \in (1,R),
\end{alignedat}
\end{cases}
\end{equation}
where 
\begin{equation}\label{prelim:eq-Nl}
\Nl(\varphi) := \frac{k(k+1)}{2} \Big( \sin\big( 2 (Q_{n,k}+ \varphi)\big) - \sin\big( 2 Q_{n,k} \big) \Big). 
\end{equation}
Since the linearization of  
$\sin( 2 (Q_{n,k} +  r^{-1} \psi_R))
- \sin( 2 Q_{n,k}  )$ is $\cos(2Q_{n,k}) (2r^{-1} \psi_R)$, we define a linear operator
\begin{equation}\label{prelim:eq-A}
A_{n,k,R} \colon \mathcal{D}\big( A_{n,k,R}\big) \subseteq L^2([1,R])\rightarrow L^2([1,R]) 
\end{equation}
by 
\begin{equation*}
\mathcal{D}\big( A_{n,k,R}\big) := \big( \dot{H}_0^1 \medcap H^2\big)([1,R])  \quad \text{and}\quad A_{n,k,R}\, \psi_R := \Big( - \partial_r^2 + \frac{k(k+1)}{r^2} \cos\big( 2 Q_{n,k} \big) \Big) \psi_R
\end{equation*}
for all $\psi_R \in \mathcal{D}\big( A_{n,k,R}\big)$. Since $-\partial_r^2$ is self-adjoint and  the multiplication operator corresponding to $\cos(2Q_{n,k})/r^2$ is bounded and self-adjoint, it follows that $A_{n,k,R}$ is self-adjoint. Furthermore, it follows from Lemma \ref{prelim:lem-soliton} that $A_{n,k,R}$ is positive definite. \\

The energy of $\phi_R$ defined as in \eqref{prelim:eq-energy} can also be written in terms of the new unknown $\psi_R$. A direct computation shows that 
\begin{equation}\label{prelim:eq-energy-psi}
\begin{aligned}
E_{k,R}\big( \phi_R, \partial_t \phi_R \big) 
=\, E_{k,R}\big( Q_{n,k}, 0 \big) +
\widetilde{E}_{n,k,R}\big( \psi_R , \partial_t \psi_R \big), 
\end{aligned}
\end{equation}
where 
\begin{equation}
\begin{aligned}
&\widetilde{E}_{n,k,R}\big( \psi_R , \partial_t \psi_R \big) \\
:=& \,   
\frac{1}{2} \int_1^R \dr \Big( (\partial_t \psi_R)^2 + (\partial_r \psi_R)^2  \Big) \\
+&\, \frac{k(k+1)}{2} \int_1^R \dr \Big( \sin\big( Q_{n,k} + r^{-1} \psi_R \big)^2 - \sin\big(Q_{n,k}\big)^2 
- \sin\big( 2Q_{n,k} \big) r^{-1} \psi_R  \Big). 
\end{aligned}
\end{equation}
In the following, the energy $\widetilde{E}_{n,k,R}$ is often decomposed as

\begin{equation}
\begin{aligned}
\widetilde{E}_{n,k,R}\big( \psi_R , \partial_t \psi_R \big)=\frac{1}{2} \int_1^R \dr \Big( (\partial_t \psi_R)^2 + (\partial_r \psi_R)^2 + \frac{k(k+1)}{r^2} \cos\big( 2 Q_{n,k} \big) \psi_R^2 \Big) + V_{n,k,R}(\psi_R),
\end{aligned}
\end{equation}
where the higher-order term $V_{n,k,R}(\psi_R)$ is defined by
\begin{align}
V_{n,k,L}(\psi_R) &:= \frac{k(k+1)}{2} \int_1^L \dr \, \mathscr{V}_{n,k}(\psi_R) \label{prelim:eq-V}, \\ 
\mathscr{V}_{n,k}(\psi_R)&:= 
 \sin^2\big(Q_{n,k}+r^{-1} \psi_R\big) -  \hspace{-0.25ex}  \sin^2\hspace{-0.5ex}\big( Q_{n,k} \big) -  \hspace{-0.25ex}  \sin \hspace{-0.25ex} \big( 2 Q_{n,k} \big) r^{-1} \psi_R  - \hspace{-0.25ex} \cos\big( 2 Q_{n,k} \big) (r^{-1} \psi_R)^2. \label{prelim:eq-scrV}
 \end{align}
We note that the integral density $\scrV_{n,k}$ corresponds to the error in the second-order Taylor expansion of $ \sin^2\big(Q_{n,k}+r^{-1} \psi_R\big)$.

\subsection{Finite-dimensional approximations}\label{section:prelim-finite-dimensional}

In order to prove the invariance of the Gibbs measure, we need to introduce finite-dimensional approximations of the Gibbs measure and dynamics. Our finite-dimensional truncation is based on the eigenfunctions of the differential operator $-\partial_r^2$ with Dirichlet boundary conditions.  We recall that the corresponding orthonormal basis of eigenfunctions is given by 
\begin{equation*}
\bigg\{ \frac{2}{\sqrt{R-1}} \sin \Big( \pi n \frac{r-1}{R-1}\Big) \colon n \geq 1 \bigg\}.     
\end{equation*}
We define $\Proj$ as the $L^2$-orthogonal projection onto the finite-dimensional space 
\begin{equation*}
V_{R,\leq N} := \operatorname{span} \bigg( \bigg\{  \frac{2}{\sqrt{R-1}} \sin \Big( \pi n \frac{r-1}{R-1}\Big) \colon 1 \leq n \leq N \bigg\} \bigg).   
\end{equation*}
We note that $V_{R,\leq N}$ contains functions with frequencies $\lesssim \hspace{-0.5ex} N/R$ (rather than $\lesssim \hspace{-0.5ex} N$). Since the finite-dimensional approximations will only be used for fixed $R\geq 1$, this does not create any problems. In the following lemma, we record a few elementary properties of the projection $\Proj$. 

\begin{lemma}[Properties of $\Proj$]\label{prelim:lem-projection}
Let $R\geq 1$, let $N\geq 1$, and let $\alpha \in [0,1)$. Then, it holds for all $f\in C_0^{0,\alpha}([1,R])$ that
\begin{align}
\Big\| \Proj f \Big\|_{L^2([1,R])} 
&\lesssim R^{1/2} \big\| f \big\|_{L^\infty([1,R])}, 
\label{prelim:eq-projection-1} \\ 
\Big\| \Proj f \Big\|_{L^\infty([1,R])} 
&\lesssim N \big\| f \big\|_{L^\infty([1,R])}, 
\label{prelim:eq-projection-2} \\ 
\Big\| \big( 1- \Proj \big) f \Big\|_{L^2([1,R])} 
&\lesssim R^{1/2} \bigg( \frac{R}{N} \bigg)^\alpha \big\| f \big\|_{C^{0,\alpha}([1,R])}. 
\label{prelim:eq-projection-3}
\end{align}
\end{lemma}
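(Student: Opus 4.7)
All three estimates follow by combining the $L^2$-contractivity of the orthogonal projection $\Proj$ with pointwise bounds on the Dirichlet sine basis, which after $L^2$-normalization satisfies $\|e_n\|_{L^\infty([1,R])}\lesssim R^{-1/2}$. I would treat \eqref{prelim:eq-projection-1} and \eqref{prelim:eq-projection-2} as warm-ups and focus on \eqref{prelim:eq-projection-3}, which is the only nontrivial bound.

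For \eqref{prelim:eq-projection-1}, simply $\|\Proj f\|_{L^2([1,R])}\leq \|f\|_{L^2([1,R])}\leq (R-1)^{1/2}\|f\|_{L^\infty([1,R])}$. For \eqref{prelim:eq-projection-2}, writing $\Proj f = \sum_{n=1}^N \langle f,e_n\rangle e_n$, each inner product is bounded by $|\langle f,e_n\rangle|\leq \|f\|_{L^\infty}\|e_n\|_{L^1}\lesssim R^{1/2}\|f\|_{L^\infty}$, each $\|e_n\|_{L^\infty}\lesssim R^{-1/2}$, and summing over $n\leq N$ produces the stated bound of $N\|f\|_{L^\infty}$.

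The main step is \eqref{prelim:eq-projection-3}, for which the plan is to reduce the truncation error to a best-approximation estimate in $V_{R,\leq N}$ and then invoke Jackson's theorem. Since $f\in C_0^{0,\alpha}([1,R])$ vanishes at both endpoints, iterated odd reflection extends $f$ to a $2(R-1)$-periodic function $\widetilde{f}\in C^{0,\alpha}(\mathbb{R})$ whose Hölder seminorm is controlled by $\|f\|_{C^{0,\alpha}([1,R])}$; the estimate across the reflection points uses precisely the vanishing of $f$ at the endpoints to handle Hölder pairs straddling those points. Jackson's theorem, rescaled to period $2(R-1)$, then produces a trigonometric polynomial $\widetilde{p}_N$ of degree $\leq N$ with
\[
\|\widetilde{f}-\widetilde{p}_N\|_{L^\infty(\mathbb{R})}\lesssim \Big(\frac{R}{N}\Big)^{\alpha}\|f\|_{C^{0,\alpha}([1,R])}.
\]
Replacing $\widetilde{p}_N$ by its odd symmetrization about $r=1$, i.e., $\tfrac{1}{2}(\widetilde{p}_N(r)-\widetilde{p}_N(2-r))$, does not worsen this error because $\widetilde{f}$ is itself odd about $r=1$. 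Moreover, since each basis function $\sin(n\pi(r-1)/(R-1))$ is automatically odd about $r=R$ as well, the symmetrized polynomial $p_N$ lies in $V_{R,\leq N}$, so $\Proj p_N = p_N$. Then
\[
\|(1-\Proj)f\|_{L^2([1,R])} = \|(1-\Proj)(f-p_N)\|_{L^2([1,R])}\leq \|f-p_N\|_{L^2([1,R])}\leq (R-1)^{1/2}\|f-p_N\|_{L^\infty([1,R])},
\]
and combining with the Jackson bound yields \eqref{prelim:eq-projection-3}.

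The one conceptual obstacle is \eqref{prelim:eq-projection-3}: a direct Fourier-tail calculation based on $|\langle f,e_n\rangle|\lesssim n^{-\alpha}$, which is all that Hölder regularity gives through the one-commutator trick, produces a divergent tail whenever $\alpha\leq 1/2$ and in any case loses an extra factor of $N^{1/2}$. The Jackson/best-approximation detour is what produces the clean rate $(R/N)^{\alpha}$ uniformly for all $\alpha\in[0,1)$.
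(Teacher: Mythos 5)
Your proofs of \eqref{prelim:eq-projection-1} and \eqref{prelim:eq-projection-2} coincide with the paper's (trivial $L^2$-boundedness plus $L^\infty\hookrightarrow L^2$, and the crude bound $|\sin|\leq 1$ on the kernel). For \eqref{prelim:eq-projection-3} you take a genuinely different route: the paper proves the two endpoint cases --- $\alpha=0$ directly from \eqref{prelim:eq-projection-1}, and the Lipschitz case via the spectral (Bernstein/Poincar\'e) estimate $\|(1-\Proj)f\|_{L^2}\lesssim (R/N)\|f'\|_{L^2}\lesssim R^{1/2}(R/N)\|f'\|_{L^\infty}$ --- and then interpolates in $\alpha$, using that $C^{0,\alpha}$ is the real interpolation space between $C^0$ and $C^{0,1}$. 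Your argument instead goes through best approximation: odd periodic extension (where the vanishing at $r=1,R$ is indeed what makes the extension continuous and keeps the H\"older seminorm under control across reflection points), rescaled Jackson's theorem, and odd symmetrization about $r=1$ to land the approximant in $V_{R,\leq N}$ (correctly noting that oddness about $r=R$ is then automatic), after which $\|(1-\Proj)f\|_{L^2}=\|(1-\Proj)(f-p_N)\|_{L^2}\leq (R-1)^{1/2}\|f-p_N\|_{L^\infty}$ closes the estimate. Both arguments are correct and give the same rate; the paper's is shorter and self-contained modulo the standard interpolation identity for H\"older spaces, while yours avoids interpolation theory entirely at the cost of importing Jackson's theorem and checking the symmetry bookkeeping of the extension. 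Your closing remark is also apt: the naive Fourier-tail computation from $|\langle f,e_n\rangle|\lesssim n^{-\alpha}$ does fail for $\alpha\leq 1/2$, which is exactly why either interpolation or best approximation is needed here.
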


\begin{remark}
The second inequality \eqref{prelim:eq-projection-2} is rather crude and can be improved significantly (using estimates for the Dirichlet kernel). Since it will only be used in soft arguments, however, the precise dependence on $N$ is inessential. 
\end{remark}

\begin{proof}
The first inequality \eqref{prelim:eq-projection-1} follows from the $L^2$-boundedness of $\Proj$ and the embedding $L^\infty \hookrightarrow L^2$. To prove the second inequality \eqref{prelim:eq-projection-2}, we note that
\begin{equation*}
P_N f(r) = \frac{2}{R-1} \sum_{n=1}^N \sin \Big( \pi n \frac{r-1}{R-1} \Big) \int_1^R \drho \, \sin \Big( \pi n \frac{\rho-1}{R-1} \Big) f(\rho). 
\end{equation*}
The desired inequality then follows from the trivial estimate $|\sin(x)|\leq 1$. The third inequality \eqref{prelim:eq-projection-3} with $\alpha=0$ follows from \eqref{prelim:eq-projection-1}. Furthermore, it holds that 
\begin{equation*}
\Big\| \big( 1 - \Proj \big) f \Big\|_{L^2} \lesssim \frac{R}{N} \big\| f^\prime \big\|_{L^2} \lesssim R^{\frac{1}{2}} \frac{R}{N} \big\| f^\prime \big\|_{L^\infty}. 
\end{equation*}
The general case $\alpha \in (0,1)$ of \eqref{prelim:eq-projection-3} then follows by interpolation. 
\end{proof}

Equipped with $\Proj$, we now define the frequency-truncated energy 
\begin{equation*}
\begin{aligned}
&\widetilde{E}_{n,k,R}^{(N)}\big( \psi^{(N)}_R, \partial_t \psi^{(N)}_R \big) \\
:=& \,  
\frac{1}{2} \int_1^R \dr \Big( (\partial_t \psi_R^{(N)})^2 + (\partial_r \psi_R^{(N)})^2 \Big) \\
+&\, \tfrac{k(k+1)}{2} \int_1^R \dr \Big( \sin\big( Q_{n,k} + r^{-1} \Proj \psi_R^{(N)} \big)^2 - \sin\big(Q_{n,k}\big)^2 
- 2 \sin\big( Q_{n,k} \big) r^{-1} \Proj \psi_R^{(N)}  \Big). 
\end{aligned}
\end{equation*}
The energy $\widetilde{E}_{n,k,R}^{(N)}$ leads to the frequency-truncated initial-boundary value problem 
\begin{equation}\label{prelim:eq-psi-R-N}
\begin{cases}
\begin{alignedat}{3}
\partial_t^2 \psi_R^{(N)} - \partial_r^2 \psi_R^{(N)} &= - \Proj \Big( r^{-1} \Nl\big( r^{-1} \Proj \psi_R^{(N)} \big) \Big) \hspace{9.5ex} &&(t,r) \in \R \times (1,R), \\ 
\psi_R^{(N)}(t,1) &=0 &&t \in \mathbb{R}, \\ 
\psi_R^{(N)}(t,R) &=0 &&t \in \mathbb{R}, \\ 
\big( \psi_R^{(N)}, \partial_t \psi_R^{(N)} \big)(0,r) &= r \big( \phi_{R,0}-Q_{n,k}, \phi_{R,1} \big)(r) &&r \in (1,R). 
\end{alignedat}
\end{cases}
\end{equation}

\section{Gaussian measures}\label{section:Gaussian}

As discussed in the introduction, the construction of the Gibbs measures is performed in two steps.
In the first step, which is the subject of this section, we analyze a family of Gaussian measures. Throughout this section, we let $n\geq 0$ and $k\geq 1$ or $n=k=0$ (as in Definition \ref{prelim:def-soliton}). Furthermore, we let $R\geq R_0$, where $R_0$ is as in Section \ref{section:prelim-notation}. 

\begin{definition}[Gaussian measures]\label{Gaussian:def-Gaussian}
We define $\scrg_{n,k,R}$ as the Gaussian measure on $L^2([1,R])$ with covariance operator $A_{n,k,R}^{-1}$, where $A_{n,k,R}$ is as in \eqref{prelim:eq-A}. 
\end{definition}

\begin{remark}\label{Gaussian:rem-representation}
The Gaussian measure $\scrg_{n,k,R}$ is supported on $L^2((1,R))$ since $A_{n,k,R}^{-1}$ is a trace-class operator (for a fixed $R\geq 1$). We can also represent $\scrg_{n,k,R}$ as the law of 
\begin{equation}
\sum_{m=1}^\infty \frac{g_m}{\lambda_m} e_m, 
\end{equation}
where $(e_m)_{m=1}^\infty$ is an orthonormal basis of eigenfunctions of $A_{n,k,R}$ with eigenvalues $(\lambda_m^2)_{m=1}^\infty$ and $(g_m)_{m=1}^\infty$ is a sequence of independent, standard, real-valued Gaussians. 
\end{remark}

In the following proposition, we obtain growth and Hölder estimates for samples from the Gaussian measure $\scrg_{n,k,R}$. 

\begin{proposition}[Gaussian measures]\label{Gaussian:prop-Gaussian}
Let $0<\epsilon\ll 1$ and define $\alpha:=1/2-\epsilon$ and $\kappa:= -1/2-\epsilon$.  Then, it holds for all $p\geq 1$ that 
\begin{equation}\label{Gaussian:eq-Gaussian-e0}
\E_{\scrg_{n,k,R}} \Big[ \big\| \psi \big\|_{\Czero([1,R])}^p \Big]^{1/p} \lesssim_\epsilon \sqrt{p}. 
\end{equation}
Furthermore, it holds for all $r\in [1,R]$ that
\begin{equation}\label{Gaussian:eq-Gaussian-e3}
\E_{\scrg_{n,k,R}} \Big[ \psi(r)^2 \Big] \gtrsim \Big( 1 - \frac{r}{R} \Big) \big( r-1 \big). 
\end{equation}
\end{proposition}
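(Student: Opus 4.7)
The measure $\scrg_{n,k,R}$ is the mean-zero Gaussian on $L^2([1,R])$ whose covariance kernel equals the Green's function $G_{n,k,R}(r,\rho)$ of $A_{n,k,R}$ with Dirichlet boundary data, so every moment computation reduces to a statement about $G_{n,k,R}$. My plan is to first establish sharp pointwise estimates on $G_{n,k,R}$ and then to combine these with Gaussian hypercontractivity and a Kolmogorov continuity argument. Starting from the classical representation
\begin{equation*}
G_{n,k,R}(r,\rho) = -W^{-1} u_1(\min(r,\rho)) \, u_2(\max(r,\rho)),
\end{equation*}
with $u_1, u_2$ the fundamental solutions of $A_{n,k,R} u = 0$ satisfying $u_1(1)=0$ and $u_2(R)=0$ and $W$ their (constant) Wronskian, I would prove the diagonal bound $G_{n,k,R}(r,r) \sim (r-1)(R-r)/(R-1)$, which yields \eqref{Gaussian:eq-Gaussian-e3} since $(1 - r/R)(r-1) \leq (r-1)(R-r)/(R-1)$. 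Near the endpoints the potential is bounded, giving $u_1(r) \sim r - 1$ as $r \to 1^+$ and $u_2(r) \sim R - r$ as $r \to R^-$; in the bulk, Lemma \ref{prelim:lem-soliton} gives $\cos(2 Q_{n,k}(r)) \to 1$ as $r \to \infty$, so a Sturm/Pr\"ufer-type comparison with the explicitly solvable asymptotic equation $-u'' + k(k+1) r^{-2} u = 0$ (whose fundamental solutions scale like $r^{k+1}$ and $r^{-k}$) determines the behavior of $u_1, u_2, W$ across the three regimes $r \sim 1$, $1 \ll r \ll R$, and $r \sim R$.

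For \eqref{Gaussian:eq-Gaussian-e0}, the additional input is the increment-variance estimate $G_{n,k,R}(r,r) + G_{n,k,R}(\rho,\rho) - 2 G_{n,k,R}(r,\rho) \lesssim |r-\rho|$, uniform in $R$, which follows by differentiating the fundamental-solution formula and using that the unit jump in $\partial_r\partial_\rho G_{n,k,R}$ across the diagonal records the delta source in the Green's equation. Gaussian hypercontractivity then upgrades these variance bounds to
\begin{equation*}
\E_{\scrg_{n,k,R}}\big[|\psi(r) - \psi(\rho)|^p\big]^{1/p} \lesssim \sqrt{p}\, |r-\rho|^{1/2}, \qquad \E_{\scrg_{n,k,R}}\big[|\psi(r)|^p\big]^{1/p} \lesssim \sqrt{p}\, r^{1/2}.
\end{equation*}
After multiplying by the weight $\max(r,\rho)^\kappa$ with $\kappa = -1/2 - \epsilon$ and using $|r-\rho| \leq \max(r,\rho)$ to redistribute powers, the pointwise Gaussian moments are controlled by $\sqrt{p}$ uniformly in $r, \rho, R$. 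A Kolmogorov continuity argument (taking $p$ large enough relative to $\epsilon$ to absorb the dyadic metric entropy of $[1,R]$, or equivalently a Besov--H\"older embedding $B^{1/2-\epsilon/2}_{p,p} \hookrightarrow C^{1/2-\epsilon,\kappa}$) then converts these pointwise moment estimates into \eqref{Gaussian:eq-Gaussian-e0}, with the claimed $\sqrt{p}$-dependence inherited from hypercontractivity.

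The technical heart of the argument is the Green's function lower bound, uniform in $R \geq R_0$. The coercivity of $A_{n,k,R}$ from Lemma \ref{prelim:lem-soliton} delivers a matching upper bound $G_{n,k,R}(r,r) \lesssim (r-1)(R-r)/(R-1)$ via a standard variational argument, but by itself it does not provide pointwise positivity or growth of the fundamental solutions. Extracting the lower bound requires genuine ODE analysis and genuinely uses the asymptotic profile of $Q_{n,k}$ in Lemma \ref{prelim:lem-soliton}, in particular the fact that $\cos(2 Q_{n,k})$ tends to $1$ at both $r=1$ and $r=\infty$. Uniformity in $R$ is essential, since this estimate must survive the infinite-volume limit in Section \ref{section:Gibbs}.
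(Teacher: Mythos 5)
Your proposal is correct in outline and ends with the same two ingredients as the paper (Gaussian hypercontractivity plus a weighted, dyadic Kolmogorov continuity argument applied to the increment bound $\E[|\psi(r)-\psi(\rho)|^2]\lesssim|r-\rho|$ and the diagonal bound $G_{n,k,R}(r,r)\lesssim r$), but the route to the Green's function estimates is genuinely different. You work with the Sturm--Liouville representation $G=-W^{-1}u_1(\min)u_2(\max)$ and propose to control $u_1,u_2,W$ by ODE comparison in three regimes; the paper instead solves the $n=0$ case explicitly (an Euler equation, since $Q_{0,k}\equiv 0$), and for $n\geq 1$ uses a second-order resolvent expansion of $A_{n,k,R}^{-1}$ around $A_{0,k,R}^{-1}$, exploiting that $|\cos(2Q_{n,k})-1|\lesssim r^{-4}$ makes the perturbation an integrable kernel, together with $|G_{n,k,R}(r,\rho)|^2\leq G_{n,k,R}(r,r)G_{n,k,R}(\rho,\rho)$ and operator monotonicity to seed the iteration. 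The paper's device confines all $r,\rho$-dependence to the explicit $G_{0,k,R}$, so the derivative bounds $|\partial_r G_{n,k,R}|\lesssim 1$ come for free; in your approach these same bounds require controlling $u_1'u_2/W$ uniformly in $R$, which is the delicate step. In particular, for $n\geq 1$ the potential $k(k+1)r^{-2}\cos(2Q_{n,k})$ is genuinely negative on an intermediate compact region (where $Q_{n,k}$ passes through $\pi/2,3\pi/2,\dots$), so your comparison with $-u''+k(k+1)r^{-2}u=0$ does not apply there; you would need to invoke the coercivity of Lemma \ref{prelim:lem-soliton} to rule out conjugate points of $u_1$ and to keep $W$ bounded away from zero uniformly in $R$. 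This is doable but is exactly the part your sketch leaves implicit.

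One assessment in your last paragraph is off: the lower bound \eqref{Gaussian:eq-Gaussian-e3} does \emph{not} require any ODE analysis or the asymptotics of $Q_{n,k}$. The paper gets it in two lines from the trivial pointwise bound $\cos(2Q_{n,k})\leq 1$, which gives $A_{n,k,R}\preceq A_{0,k,R}$ as quadratic forms, hence $G_{n,k,R}(r,r)\geq G_{0,k,R}(r,r)$ by operator monotonicity of the inverse, and then the explicit formula for $G_{0,k,R}(r,r)$ yields $\gtrsim(1-r/R)(r-1)$. Your reduction $(1-r/R)(r-1)\leq (r-1)(R-r)/(R-1)$ is fine, so your route also closes, but it is considerably harder than necessary at precisely the point you single out as the technical heart.
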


\begin{remark} Proposition \ref{Gaussian:prop-Gaussian} shows  that $\psi(r)$ grows slower than $r^{1/2+\epsilon}$ for all $\epsilon>0$. Thus, the growth rate of $\psi(r)$ is as for Brownian motion, which corresponds to the case $n=k=0$ (in the limit $R\rightarrow \infty$). 
\end{remark}

The proof of Proposition \ref{Gaussian:prop-Gaussian} is postponed until Subsection \ref{section:Gaussian-control} below. While Proposition \ref{Gaussian:prop-Gaussian} yields uniform estimates in $R \geq R_0$, it does not (explicitly) contain the convergence in the infinite-volume limit $R\rightarrow \infty$, which is the subject of the next lemma. 

\begin{lemma}[Infinite-volume limit]\label{Gaussian:lem-infinite}
Let $\alpha:=1/2-\delta$ and let $\kappa:= -1/2-\delta$. Then, there exists a unique Gaussian measure $\scrg_{n,k}$ supported on $\Czero_0([1,\infty))$ which satisfies 
\begin{equation}\label{Gaussian:eq-infinite-e1}
\big( \Rest[L][\infty]\big)_\# \scrg_{n,k} 
= \operatorname{w-lim}\displaylimits_{R\rightarrow \infty} 
\big( \Rest[L][R]\big)_\# \scrg_{n,k,R}
\end{equation}
for all $L\geq 1$. In \eqref{Gaussian:eq-infinite-e1}, the limit refers to the weak limit on $\Czero_{(0)}([1,L])$ (see Definition \ref{prelim:def-spaces} and Definition \ref{appendix:def-weak-convergence}) and $( \Rest[L][\infty])_\# $ and $( \Rest[L][R])_\# $ denote push-forwards. 
\end{lemma}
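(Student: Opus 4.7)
The plan is to construct $\scrg_{n,k}$ by first establishing, for each $L\geq 1$, the weak limit $\nu_L := \operatorname{w-lim}_{R\rightarrow\infty} (\Rest[L][R])_\# \scrg_{n,k,R}$ on $\Czero_{(0)}([1,L])$, then verifying the consistency $(\Rest[L'][L])_\# \nu_L = \nu_{L'}$ for $L'\leq L$, and finally assembling $\{\nu_L\}_L$ into a single Gaussian measure on $\Czero_0([1,\infty))$ via Kolmogorov extension.

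For the weak convergence step, fix $L\geq 1$. Applying Proposition~\ref{Gaussian:prop-Gaussian} with $\epsilon$ replaced by $\delta/2$ produces $R$-uniform moment bounds in the stronger norm $C^{0,1/2-\delta/2,-1/2-\delta/2}([1,R])$; restriction to $[1,L]$ and Markov's inequality then yield tightness of the family $\{(\Rest[L][R])_\# \scrg_{n,k,R}\}_{R\geq R_0}$ in $\Czero_{(0)}([1,L])$, thanks to the compact Arzel\`a--Ascoli embedding
\begin{equation*}
C^{0,1/2-\delta/2,-1/2-\delta/2}([1,L]) \hookrightarrow \Czero_{(0)}([1,L]).
\end{equation*}
Every subsequential weak limit is a centered Gaussian measure on a separable Banach space, hence determined by its covariance kernel. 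For $r,\rho\in[1,L]$, that kernel is the Green's function $G_{n,k,R}(r,\rho)$ of $A_{n,k,R}$ with Dirichlet data at $r=1$ and $r=R$, which admits the representation
\begin{equation*}
G_{n,k,R}(r,\rho) \,=\, \frac{\psi^-(\min(r,\rho))\,\psi^+_R(\max(r,\rho))}{W(\psi^-,\psi^+_R)},
\end{equation*}
where $\psi^-$ is the $R$-independent solution of $A_{n,k,\infty}\psi=0$ vanishing at $r=1$ and $\psi^+_R$ is the solution vanishing at $r=R$. Using the tail $Q_{n,k}(r)-n\pi = O(r^{-k-1})$ from Lemma~\ref{prelim:lem-soliton}, the potential in \eqref{intro:eq-Schroedinger-operator} is a short-range perturbation of $k(k+1)/r^2$ at large $r$, whose decaying Frobenius branch is $r^{-k}$. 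A variation-of-parameters argument on $[L,\infty)$ then yields a unique (up to scaling) decaying solution $\psi^+_\infty$ of $A_{n,k,\infty}\psi=0$ and, after a normalization keeping the Wronskian bounded away from zero, the convergence $\psi^+_R\to\psi^+_\infty$ locally uniformly on $[1,\infty)$. Hence $G_{n,k,R}\to G_{n,k,\infty}$ pointwise on $[1,L]^2$, which identifies the subsequential limit uniquely as the centered Gaussian $\nu_L$ with covariance $G_{n,k,\infty}\big|_{[1,L]^2}$, and upgrades tightness to full weak convergence.

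Consistency $(\Rest[L'][L])_\# \nu_L = \nu_{L'}$ for $1\leq L'\leq L$ is immediate from the factorization $\Rest[L'][R] = \Rest[L'][L]\circ\Rest[L][R]$ and continuity of the restrictions. The consistent family $\{\nu_L\}$ defines a centered Gaussian process on $[1,\infty)$ with covariance $G_{n,k,\infty}$. Passing the Proposition~\ref{Gaussian:prop-Gaussian} bounds to the limit via Fatou and applying Kolmogorov's continuity criterion produces a version with trajectories in $\Czero_0([1,\infty))$ almost surely; its law is $\scrg_{n,k}$. Uniqueness of $\scrg_{n,k}$ follows because $(\Rest[L][\infty])_\# \scrg_{n,k}=\nu_L$ for every $L\geq 1$ and the Borel $\sigma$-algebra of the separable space $\Czero_0([1,\infty))$ is generated by the cylinder sets coming from such restrictions. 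The main obstacle is the Green's function convergence $G_{n,k,R}\to G_{n,k,\infty}$ with quantitative control on the Wronskian normalization; this reduces to an ODE-asymptotics question at infinity where the $r^{-k-1}$ tail from Lemma~\ref{prelim:lem-soliton} makes the perturbation of $-\partial_r^2 + k(k+1)/r^2$ integrable and isolates the decaying branch up to scaling.
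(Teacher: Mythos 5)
Your overall architecture coincides with the paper's: reduce to the existence of the weak limit on $\Czero_{(0)}([1,L])$ for each $L$, get tightness from Proposition \ref{Gaussian:prop-Gaussian} (via the compact embedding of a slightly stronger H\"older norm), identify subsequential limits through the covariance, and assemble the consistent family by Kolmogorov extension. Where you genuinely diverge is in the key analytic step, the convergence of the covariance kernels $G_{n,k,R}$ as $R\to\infty$. The paper proves this (Lemma \ref{Gaussian:lem-Green-convergence}) by a weighted energy estimate for the difference $(R/r)^{\eta}\big(G_{n,k,R'}(\cdot,\rho)-G_{n,k,R}(\cdot,\rho)\big)$, combining the coercivity of Lemma \ref{prelim:lem-soliton} with Hardy's inequality; this yields a quantitative Cauchy rate $L^{1+\eta}R^{-\eta}$ in $L^2_r$ uniformly in $\rho\in[1,L]$ and never needs an explicit limiting kernel. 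You instead use the Sturm--Liouville product representation $G_{n,k,R}=\psi^-(r_<)\psi^+_R(r_>)/W$ and ODE asymptotics at infinity, which buys an explicit formula for $G_{n,k,\infty}$ but requires more care at two points you currently gloss over: (i) the convergence of the normalized $\psi^+_R$ to the principal (decaying) solution $\psi^+_\infty$ is a disconjugacy/nonoscillation statement, and (ii) the nonvanishing of the limiting Wronskian requires that $\psi^-$ lies on the \emph{growing} branch $r^{k+1}$ at infinity. Neither follows from the soliton tail alone; both need the positive definiteness of $A_{n,k,R}$ from Lemma \ref{prelim:lem-soliton} (if $\psi^-$ decayed it would be a zero mode in $\dot H^1_0$, and disconjugacy on $[1,R]$ for all $R\geq R_0$ is exactly what the coercivity provides). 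With those inputs made explicit your route closes, and your identification of subsequential limits via the covariance of point evaluations is equivalent to the paper's testing against $\xi_L\in C^\infty_c((1,L))$ (pointwise convergence of $G_{n,k,R}$ together with the uniform bound $|G_{n,k,R}|\lesssim\min(r,\rho)$ from Lemma \ref{Gaussian:lem-Green-estimate} upgrades to the $L^1([1,L]^2)$ convergence the paper uses).
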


Just as for Proposition \ref{Gaussian:prop-Gaussian}, the proof of Lemma \ref{Gaussian:lem-infinite} is postponed until Subsection \ref{section:Gaussian-control} below. 
In addition to the Gaussian measures $\scrg_{n,k,R}$ and $\scrg_{n,k}$, which will be used to describe the random initial position, we also need a measure describing the random initial velocity. This measure is the white noise measure, which is defined in the following definition. 

\begin{definition}[White noise measure]\label{Gaussian:def-white-noise}
We define the white noise measure $\scrw_R$ as the push-forward of $\scrg_{0,0,R}$ under the distributional derivative $\partial_r$. 
\end{definition}

Since the potential energy in \eqref{prelim:eq-V} and \eqref{prelim:eq-scrV} only depends on the position but not the velocity, the white noise measure $\scrw_R$ plays a less important role in this article than the Gaussian measures $\scrg_{n,k,R}$. 
In the following corollary, we record the properties of the white noise measure, which easily follow from the corresponding properties of the Gaussian measures. 

\begin{corollary}[White noise measure]\label{Gaussian:cor-white-noise}
Let $0<\epsilon\ll 1$, let $\alpha:=1/2-\epsilon$, and let $\kappa:= -1/2-\epsilon$.  Then, it holds for all $p\geq 1$ that 
\begin{equation}\label{Gaussian:eq-white-noise-e0}
\E_{\scrw_R} \Big[ \big\| \psi \big\|_{\Cmone([1,R])}^p \Big]^{1/p} \lesssim_\epsilon \sqrt{p}. 
\end{equation}
Furthermore, there exists a unique probability measure $\scrw$ supported on $\Cmone([1,\infty))$ which satisfies 
\begin{equation*}
\big( \Rest[L][\infty] \big)_\# \scrw = \operatorname{w-lim}\displaylimits_{R\rightarrow \infty} \big( \Rest[L][R] \big)_\# \scrw_R
\end{equation*}
for all $L\geq 1$. 
\end{corollary}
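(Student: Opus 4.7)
The plan is to deduce Corollary \ref{Gaussian:cor-white-noise} from Proposition \ref{Gaussian:prop-Gaussian} and Lemma \ref{Gaussian:lem-infinite} applied with $n=k=0$. The guiding observation is that Definition \ref{prelim:def-spaces} was set up precisely so that distributional differentiation is an isometry from the subspace of $\Czero$-functions vanishing at $r=1$ onto $\Cmone$; since $Q_{0,0}\equiv 0$, samples from $\scrg_{0,0,R}$ automatically satisfy this Dirichlet condition, and $\scrw_R$ is by definition the push-forward of $\scrg_{0,0,R}$ under $\partial_r$.

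For the moment bound \eqref{Gaussian:eq-white-noise-e0}, I would first record that if $\chi \sim \scrg_{0,0,R}$, then $\chi(1)=0$ almost surely. This follows from the spectral representation in Remark \ref{Gaussian:rem-representation}: the eigenfunctions of $A_{0,0,R}=-\partial_r^2$ all vanish at $r=1$ by the Dirichlet boundary condition, and Proposition \ref{Gaussian:prop-Gaussian} guarantees that the series defining $\chi$ converges uniformly on $[1,R]$, so the limit inherits the vanishing. Setting $\psi:=\partial_r\chi$, the fundamental theorem of calculus gives $\int_1^r \drho\, \psi(\rho)=\chi(r)$, and hence directly from Definition \ref{prelim:def-spaces}
\begin{equation*}
\big\| \psi \big\|_{\Cmone([1,R])} = \big\| \chi \big\|_{\Czero([1,R])}.
\end{equation*}
The desired estimate then drops out of \eqref{Gaussian:eq-Gaussian-e0}.

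For the infinite-volume claim, I would define $\scrw := (\partial_r)_\# \scrg_{0,0}$, where $\scrg_{0,0}$ is the measure produced by Lemma \ref{Gaussian:lem-infinite}. Two elementary points then need to be checked: (i) restriction commutes with distributional differentiation on continuous functions vanishing at $r=1$, so that $\Rest[L][R]\circ \partial_r = \partial_r \circ \Rest[L][R]$; and (ii) $\partial_r$ is a homeomorphism from $\Czero_{(0)}([1,L])$ onto $\Cmone([1,L])$, which is built into Definition \ref{prelim:def-spaces}. Granting these, one has
\begin{equation*}
\big(\Rest[L][R]\big)_\# \scrw_R = (\partial_r)_\# \big(\Rest[L][R]\big)_\# \scrg_{0,0,R} \xrightarrow[R\to\infty]{\mathrm{w}} (\partial_r)_\# \big(\Rest[L][\infty]\big)_\# \scrg_{0,0} = \big(\Rest[L][\infty]\big)_\# \scrw
\end{equation*}
by Lemma \ref{Gaussian:lem-infinite} combined with the continuous-mapping theorem. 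Uniqueness of $\scrw$ on $\Cmone([1,\infty))$ follows because the family of marginals under $\{\Rest[L][\infty]\}_{L\geq 1}$ determines the measure.

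There is no serious obstacle here: the corollary is a bookkeeping exercise that propagates the already-established Gaussian estimates through the isometric identification $\partial_r\colon \Czero_{(0)}\to \Cmone$. The only point requiring a moment's care is verifying that samples from $\scrg_{0,0,R}$ almost surely vanish at $r=1$, and I expect this to follow immediately from the spectral representation in Remark \ref{Gaussian:rem-representation}.
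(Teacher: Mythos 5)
Your proposal is correct and is essentially the paper's argument: the paper's proof simply states that the corollary "follows directly from the definitions of the $\Czero$ and $\Cmone$-norms, the definition of $\scrw_R$, Proposition \ref{Gaussian:prop-Gaussian}, and Lemma \ref{Gaussian:lem-infinite}," and your write-up is exactly the elaboration of that — the identity $\|\partial_r\chi\|_{\Cmone}=\|\chi\|_{\Czero}$ for $\chi(1)=0$, plus the continuous-mapping theorem for the push-forward under $\partial_r$ in the infinite-volume limit. No gaps.
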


\begin{proof} This follows directly from the definitions of the $\Czero$ and $\Cmone$-norms, the definition of $\scrw_R$, and Proposition \ref{Gaussian:prop-Gaussian}, and Lemma \ref{Gaussian:lem-infinite}.
\end{proof}

\subsection{The Green's functions} 
In order to prove Proposition \ref{Gaussian:prop-Gaussian}, we require estimates for the Green's function of the operator $A_{n,k,R}$, which is defined in the following definition. 

\begin{definition}[Green's functions]\label{Gaussian:def-Green}
We define $G_{n,k,R}\colon [1,R] \times [1,R]\rightarrow \R$ as the Green's function corresponding to the operator $A_{n,k,R}$, i.e., as the solution of the initial value problem
\begin{equation}
\begin{cases}
\Big( - \partial_r^2 + \frac{k(k+1)}{r^2} \cos\big( 2 Q_{n,k} \big) \Big) G_{k,n,R}(r,\rho) = \delta(r-\rho), \\[1ex]
G_{k,n,R}(1,\rho) = G_{k,n,R}(R,\rho) = 0. 
\end{cases}
\end{equation}
\end{definition}

Since $A_{n,k,R}$ is self-adjoint, the Green's function $G_{n,k,R}$ is symmetric, i.e., satisfies
\begin{equation}\label{Gaussian:eq-Green-symmetry}
G_{n,k,R}(r,\rho) = G_{n,k,R}(\rho,r)
\end{equation}
for all $(r,\rho)\in [1,R]^2$. In the next lemma, we state two representations of the Green's functions. The first representation, which is only available in the special case $n=0$, is explicit. The second representation, which holds for general $n\geq 0$, is an expansion of $G_{n,k,R}$ around $G_{0,k,R}$. 

\begin{lemma}[Representations of the Green's functions]\label{Gaussian:lem-representation}
We have the following two identities. 
\begin{enumerate}[label=(\roman*)]
    \item\label{Gaussian:item-representation-1} The case $n=0$: For all $1\leq r \leq \rho \leq R$, it holds that 
    \begin{equation*}
    G_{0,k,R}(r,\rho) = \frac{1}{1+2\gamma} \frac{R^{1+2\gamma}- \rho^{1+2\gamma}}{R^{1+2\gamma}-1} \Big( \rho^{-\gamma} r^{1+\gamma} - \rho^{-\gamma} r^{-\gamma} \Big),
    \end{equation*}
    where 
    \begin{equation*}
    \gamma= \gamma_k := \sqrt{ \frac{1}{4} + k (k+1)} - 1/2 \geq 0. 
    \end{equation*}
    \item\label{Gaussian:item-representation-2} The case $n\geq 1$:  For all $1\leq r,\rho \leq R$, it holds that 
    \begin{align*}
    G_{n,k,R}(r,\rho) &= G_{0,k,R}(r,\rho) \\
    &+ k (k+1) \int_1^R \du \, G_{0,k,R}(r,u) 
    \, \frac{\cos\big( 2 Q_{n,k}(u)\big)-1}{u^2} \,  G_{0,k,R}(u,\rho) \\
    &+ k^2 (k+1)^2 \int_1^R \du \int_1^R \dv  \bigg[ \, G_{0,k,R}(r,u) 
    \, \frac{\cos\big( 2 Q_{n,k}(u)\big)-1}{u^2} \, G_{n,k,R}(u,v) \\
    &\hspace{2.5ex}\times  
    \, \frac{\cos\big( 2 Q_{n,k}(v)\big)-1}{v^2} \, G_{0,k,R}(v,\rho) \bigg].  
    \end{align*}
\end{enumerate}
\end{lemma}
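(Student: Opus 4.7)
My plan for part (i) is to solve the Green's function problem for the Euler-type operator directly. Since $Q_{0,k}\equiv 0$ by Definition \ref{prelim:def-soliton}, we have $\cos(2Q_{0,k})\equiv 1$ and hence $A_{0,k,R}=-\partial_r^2+k(k+1)/r^2$. The homogeneous equation $-\psi''+k(k+1)\psi/r^2=0$ is an Euler equation whose indicial polynomial $a^2-a-k(k+1)=0$ has roots $a=1+\gamma$ and $a=-\gamma$ with $\gamma=\sqrt{1/4+k(k+1)}-1/2$. Forming
\[
\psi_L(r):=r^{1+\gamma}-r^{-\gamma},\qquad \psi_R(r):=r^{1+\gamma}-R^{1+2\gamma}\,r^{-\gamma},
\]
which vanish at $r=1$ and $r=R$ respectively, the standard Sturm--Liouville formula gives $G_{0,k,R}(r,\rho)=-\psi_L(r_<)\psi_R(r_>)/W$, where $W:=\psi_L\psi_R'-\psi_L'\psi_R$ is the (constant) Wronskian. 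A direct computation, using $\psi_L'(r)=(1+\gamma)r^\gamma+\gamma r^{-\gamma-1}$ and the analogous expression for $\psi_R'$, collapses the $r^{1+2\gamma}$ and $r^{-1-2\gamma}$ contributions and yields
\[
W=(1+2\gamma)\bigl(R^{1+2\gamma}-1\bigr).
\]
Factoring $\rho^{-\gamma}$ out of $\psi_R(\rho)=-(R^{1+2\gamma}\rho^{-\gamma}-\rho^{1+\gamma})=-\rho^{-\gamma}(R^{1+2\gamma}-\rho^{1+2\gamma})$ and substituting produces exactly the displayed formula for $r\le\rho$; the case $r\ge\rho$ follows from symmetry \eqref{Gaussian:eq-Green-symmetry}.

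My plan for part (ii) is to iterate the second resolvent identity. Writing $A_{n,k,R}=A_{0,k,R}+W_n$, where $W_n$ denotes the bounded multiplication operator by the function $k(k+1)(\cos(2Q_{n,k})-1)/r^2$, the identity $A_n^{-1}=A_0^{-1}-A_0^{-1}W_nA_n^{-1}$ follows from multiplying $A_n=A_0+W_n$ on the right by $A_n^{-1}$ and on the left by $A_0^{-1}$. Substituting this identity into its own $A_n^{-1}$-factor on the right gives the telescoping expansion
\[
A_{n,k,R}^{-1}=A_{0,k,R}^{-1}-A_{0,k,R}^{-1}W_nA_{0,k,R}^{-1}+A_{0,k,R}^{-1}W_nA_{n,k,R}^{-1}W_nA_{0,k,R}^{-1}.
\]
Translating each operator product into an integral convolution of Green's functions (i.e.\ writing $(K_1K_2)(r,\rho)=\int K_1(r,u)K_2(u,\rho)\,du$) and substituting the explicit form of $W_n$ reproduces the three-term identity of the lemma.

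The main obstacle is essentially just careful bookkeeping — there is no analytic subtlety. On the bounded interval $[1,R]$ with the bounded potential $(k(k+1)/r^2)\cos(2Q_{n,k})$, the operator $A_{n,k,R}$ is a positive self-adjoint operator with compact (in fact trace-class) inverse, so $G_{n,k,R}$ is a continuous function on $[1,R]^2$ and all the iterated integrals converge absolutely. The two places where one must be careful are: (a) getting the Wronskian in part (i) correct so that the precise constant $(1+2\gamma)(R^{1+2\gamma}-1)$ emerges after cancellation, and (b) keeping track of the sign/order of factors in the iterated resolvent expansion in part (ii). A useful sanity check for (b) is the monotonicity $A_{0,k,R}\ge A_{n,k,R}$ (since $\cos(2Q_{n,k})\le 1$), which forces $G_{n,k,R}\ge G_{0,k,R}$ pointwise and fixes the sign of the first-order correction.
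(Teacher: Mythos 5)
Your proof is correct and follows essentially the same route as the paper: for part (i) both arguments solve the Euler equation $-\psi''+k(k+1)r^{-2}\psi=0$ explicitly (the paper determines the four coefficients of the piecewise ansatz from the continuity, jump, and boundary conditions, while you package the identical computation into the Sturm--Liouville formula $G=-\psi_L(r_<)\psi_R(r_>)/W$; your Wronskian $(1+2\gamma)(R^{1+2\gamma}-1)$ and the final simplification are correct), and for part (ii) both iterate the second resolvent identity. One point deserves attention: your (correct) bookkeeping gives
\begin{equation*}
A_{n,k,R}^{-1}=A_{0,k,R}^{-1}-A_{0,k,R}^{-1}W_nA_{0,k,R}^{-1}+A_{0,k,R}^{-1}W_nA_{n,k,R}^{-1}W_nA_{0,k,R}^{-1},
\end{equation*}
so the first-order kernel correction is $-\,k(k+1)\int_1^R G_{0,k,R}(r,u)\,u^{-2}\big(\cos(2Q_{n,k}(u))-1\big)G_{0,k,R}(u,\rho)\,\mathrm{d}u$, which has the \emph{opposite} sign to the one displayed in the lemma; the paper's own intermediate identity \eqref{Gaussian:eq-representation-p4} starts from $A_n^{-1}=A_0^{-1}+A_n^{-1}(A_n-A_0)A_0^{-1}$, whose right-hand side actually equals $2A_0^{-1}-A_n^{-1}$, so the sign slip originates there. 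Your sign is the correct one --- it is the only one consistent with your own sanity check $G_{n,k,R}\ge G_{0,k,R}$ --- so you should not assert that your expansion ``reproduces'' the displayed identity verbatim; rather it corrects a harmless sign that is immaterial downstream, since Lemma \ref{Gaussian:lem-Green-estimate} only uses absolute values of the three summands.
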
 

\begin{remark}
In the case $n=k=0$, the Green's function is given by 
\begin{equation*}
G_{0,0,R}(r,\rho) = \frac{R-\rho}{R-1} (r-1) 
\end{equation*}
for all $1\leq r \leq \rho \leq R$. This corresponds to a Brownian bridge with starting point $r=1$ and endpoint $r=R$. 
\end{remark}

\begin{proof}
We prove the two identities in \ref{Gaussian:item-representation-1} and \ref{Gaussian:item-representation-2} separately. \\

\emph{Proof of \ref{Gaussian:item-representation-1}:} Since $Q_{0,k}=0$, $G_{0,k,R}$ is a solution of 
\begin{equation}\label{Gaussian:eq-representation-p1}
\Big( - \partial_r^2 + \frac{k(k+1)}{r^2} \Big) G_{0,k,R}(r,\rho) = \delta(r-\rho).     
\end{equation}
The characteristic polynomial equation corresponding to \eqref{Gaussian:eq-representation-p1} is given by 
$- \alpha (\alpha-1)  + k (k+1) = 0$, 
which has the roots 
\begin{equation*}
\bigg\{ \frac{1}{2} + \sqrt{ \frac{1}{4} + k (k+1)}, \frac{1}{2} -\sqrt{ \frac{1}{4} + k (k+1)} \bigg\} = \big\{ 1+ \gamma, - \gamma \big\}. 
\end{equation*}
Thus, the Green's function $G_{0,k,R}$ can be written as 
\begin{equation}\label{Gaussian:eq-representation-p2}
G_{0,k,R}(r,\rho) = 
\begin{cases}
\begin{tabular}{ll}
$a \, r^{1+\gamma} + b \, r^{-\gamma}$ & if $r\leq \rho$,  \\
$c \, r^{1+\gamma} + d \, r^{-\gamma}$ & if $r> \rho$,
\end{tabular}
\end{cases}
\end{equation}
where $a,b,c,d\in \R$ are parameters depending only on $\gamma$, $R$, and $\rho$. In addition to \eqref{Gaussian:eq-representation-p2}, the differential equation \eqref{Gaussian:eq-representation-p1} also implies the two conditions
\begin{equation}\label{Gaussian:eq-representation-p3} 
\lim_{r\uparrow \rho} G_{0,k,R}(r,\rho) = \lim_{r\downarrow \rho} G_{0,k,R}(r,\rho) 
\qquad \text{and} \qquad \lim_{r\uparrow \rho} \partial_r G_{0,k,R}(r,\rho) = \lim_{r\downarrow \rho} \partial_r G_{0,k,R}(r,\rho) +1 .  
\end{equation}
Together with the boundary conditions $G_{0,k,R}(1,\rho)=G_{0,k,R}(R,\rho)=0$, \eqref{Gaussian:eq-representation-p3} yields a linear system for the parameters $a,b,c,d\in \R$, whose solution leads to the desired identity. \\

\emph{Proof of \ref{Gaussian:item-representation-2}:} By using the resolvent identity twice, we obtain that 
\begin{equation}\label{Gaussian:eq-representation-p4}
\begin{aligned}
A_{n,k,R}^{-1} &= A_{0,k,R}^{-1} + A_{n,k,R}^{-1} \big( A_{n,k,R}- A_{0,k,R} \big) A_{0,k,R}^{-1} \\ 
&= A_{0,k,R}^{-1} + A_{0,k,R}^{-1} \big( A_{n,k,R}- A_{0,k,R} \big) A_{0,k,R}^{-1} \\ 
&+A_{0,k,R}^{-1} \big( A_{n,k,R}- A_{0,k,R} \big)  A_{n,k,R}^{-1} \big( A_{n,k,R}- A_{0,k,R} \big) A_{0,k,R}^{-1} 
\end{aligned}
\end{equation}
After converting this operator identity into an identity for the corresponding Green's functions, we obtain the desired identity. 
\end{proof}

In the next lemma, we obtain pointwise and derivative estimates for the Green's functions. These estimates will be the main ingredient in the growth and regularity estimates in Proposition \ref{Gaussian:prop-Gaussian}. 

\begin{lemma}[Growth and derivative estimates for the Green's functions]\label{Gaussian:lem-Green-estimate}
It holds for all $1\leq r,\rho\leq R$ that 
\begin{align}
\big| G_{k,n,R}(r,\rho) \big| &\lesssim \min(r,\rho), \label{Gaussian:eq-Green-estimate-1}\\ 
\big| \partial_r G_{k,n,R}(r,\rho) \big| &\lesssim 1, \label{Gaussian:eq-Green-estimate-2} \\
\big|  \partial_\rho G_{k,n,R}(r,\rho) \big| &\lesssim 1. \label{Gaussian:eq-Green-estimate-3}
\end{align}
\end{lemma}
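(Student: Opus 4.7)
I would treat the cases $n=0$ and $n\geq 1$ separately, exploiting the two representations in Lemma~\ref{Gaussian:lem-representation}. For the base case $n=0$, everything follows directly from the explicit formula in Lemma~\ref{Gaussian:lem-representation}(i). For $1 \leq r \leq \rho \leq R$, the prefactor $(R^{1+2\gamma}-\rho^{1+2\gamma})/(R^{1+2\gamma}-1)$ lies in $[0,1]$, and since $\gamma \geq 0$ and $r \leq \rho$,
\[ \rho^{-\gamma}(r^{1+\gamma}-r^{-\gamma}) \leq \rho^{-\gamma}r^{1+\gamma} \leq r^{-\gamma}r^{1+\gamma} = r = \min(r,\rho). \]
The symmetry \eqref{Gaussian:eq-Green-symmetry} handles the case $r \geq \rho$. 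Differentiating the explicit formula in $r$ (resp.\ $\rho$) produces terms like $\rho^{-\gamma}r^\gamma$, $\rho^{-\gamma}r^{-\gamma-1}$, along with contributions from differentiating the prefactor $(R^{1+2\gamma}-\rho^{1+2\gamma})/(R^{1+2\gamma}-1)$; each can be verified to be $O(1)$ on $[1,R]$ by elementary manipulations, using the suppression $R^{1+2\gamma}-1 \sim R^{1+2\gamma}$ against the worst-case growth of the leftover factors.

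For $n\geq 1$, I would use the expansion in Lemma~\ref{Gaussian:lem-representation}(ii), writing $G_{n,k,R} = G_{0,k,R} + T_1 + T_2$, where $T_1$ is the single-integral correction and $T_2$ is the double-integral term that still contains $G_{n,k,R}$. Set $V_n(u) := k(k+1)(\cos(2Q_{n,k}(u))-1)/u^2$. The essential input is Lemma~\ref{prelim:lem-soliton}: since $|Q_{n,k}(u)-n\pi| \lesssim u^{-(k+1)}$, one has $|\cos(2Q_{n,k}(u))-1| = 2\sin^2(Q_{n,k}(u)-n\pi) \lesssim u^{-2(k+1)}$ at infinity, so $u^j |V_n(u)| \in L^1([1,\infty))$ for $j=0,1,2$ uniformly in $R$. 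Combined with the bound $|G_{0,k,R}(r,u)| \lesssim \min(r,u)$ from the base case, one obtains $|T_1(r,\rho)| \lesssim \min(r,\rho)$ by splitting the $u$-integral at $u=\min(r,\rho)$ and $u=\max(r,\rho)$ and using the three $L^1$-estimates on $u^j|V_n(u)|$ for $j=0,1,2$.

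For $T_2$, I would run a bootstrap. For each fixed $R$, continuity of $G_{n,k,R}$ together with the Dirichlet boundary condition makes
\[ M_R := \sup_{r \neq \rho,\; r,\rho \in [1,R]} \frac{|G_{n,k,R}(r,\rho)|}{\min(r,\rho)} \]
finite, so plugging $|G_{n,k,R}(u,v)| \leq M_R\min(u,v)$ into $T_2$ and using the same integrability of $V_n$ yields
\[ M_R \leq C_0 + C_1 M_R \]
with constants $C_0,C_1$ independent of $R$. The derivative bounds follow analogously by differentiating the expansion in $r$ or $\rho$, substituting the freshly obtained bound on $G_{n,k,R}$ together with the derivative bound on $G_{0,k,R}$ from the base case.

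\textbf{Main obstacle.} The bootstrap closes immediately only if $C_1<1$, which is not automatic since the potential $V_n$ is not assumed small. The remedy is to iterate the resolvent identity further before stopping; each additional iteration contributes an extra factor of roughly $\int u|V_n(u)|\,du$ (times integral kernels of $G_{0,k,R}$), so after finitely many iterations the remainder coefficient is strictly less than one and the bootstrap closes uniformly in $R$. An alternative, possibly cleaner, route is a Sturm-Liouville construction: write $G_{n,k,R}(r,\rho) = \phi_-(\min(r,\rho))\phi_+(\max(r,\rho))/W$ for the two homogeneous solutions satisfying the prescribed boundary conditions at $1$ and $R$ and derive asymptotics for $\phi_\pm$ directly from the $k(k+1)/r^2$ leading term in the potential, which bypasses the bootstrap at the cost of additional ODE analysis.
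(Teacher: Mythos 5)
Your Step I ($n=0$, explicit formula) and your overall decomposition for $n\geq 1$ via the second-order resolvent expansion of Lemma~\ref{Gaussian:lem-representation} match the paper, as does your observation that the derivative bounds come for free because $r$ and $\rho$ only enter the expansion through $G_{0,k,R}$. The gap is exactly where you flag it: your treatment of the double-integral term $T_2$ does not close. The bootstrap $M_R \leq C_0 + C_1 M_R$ is useless unless $C_1<1$, and your proposed remedy --- iterating the resolvent identity further --- does not fix this. After $m$ iterations the remainder is $B^m A_{n,k,R}^{-1}$ with $B=A_{0,k,R}^{-1}(A_{0,k,R}-A_{n,k,R})$, and in your weighted sup-norm each extra power of $B$ multiplies the coefficient by the \emph{same} constant $C_1 \sim \int_1^\infty u\,|V_n(u)|\,du$ (times kernel bounds for $G_{0,k,R}$); if $C_1\geq 1$ the coefficient $C_1^m$ grows rather than shrinks. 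Smallness of the per-iteration factor is exactly what you cannot assume, since the potential $\cos(2Q_{n,k})-1$ is $O(1)$ near $r=1$ and is not small in any norm. (One could try to argue via the spectral radius of $B$, which is $<1$ uniformly in $R$ because $A_{n,k,R}\succeq c\,A_{0,k,R}$, but converting that into the pointwise weighted kernel bounds you need is a nontrivial extra argument you have not supplied.) The Sturm--Liouville alternative is viable in principle but is only sketched; the uniformity in $R$ of the Wronskian and of the solution $\phi_+$ normalized at $r=R$ is precisely the content one would have to prove.

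The paper sidesteps the self-reference entirely with a short a priori bound that you are missing: by Cauchy--Schwarz, $|G_{n,k,R}(r,\rho)|^2 = |\langle \delta_r, A_{n,k,R}^{-1}\delta_\rho\rangle|^2 \leq G_{n,k,R}(r,r)\,G_{n,k,R}(\rho,\rho)$, and by Lemma~\ref{prelim:lem-soliton} together with operator monotonicity of the inverse, $A_{n,k,R}^{-1}\preceq c_{n,k}^{-1}A_{0,k,R}^{-1}$, so $G_{n,k,R}(r,r)\leq c_{n,k}^{-1}G_{0,k,R}(r,r)\lesssim r$. This yields the unconditional bound $|G_{n,k,R}(u,v)|\lesssim \sqrt{uv}$, which, although weaker than $\min(u,v)$, is more than enough to make the double integral in $T_2$ converge (the potential contributes $u^{-6}v^{-6}$, so the integrand is $\lesssim u^{-9/2}v^{-9/2}$) and gives $|T_2|\lesssim 1\lesssim\min(r,\rho)$. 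I recommend replacing your bootstrap by this Cauchy--Schwarz plus operator-monotonicity step; with it, the rest of your argument goes through as written.
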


\begin{proof}
We split the proof into two steps. In the first step, we treat the special case $n=0$, which uses  Lemma \ref{Gaussian:lem-representation}.\ref{Gaussian:item-representation-1}. In the second step, we then treat the general case $n\geq 1$, which uses the resolvent identity from  Lemma \ref{Gaussian:lem-representation}.\ref{Gaussian:item-representation-2}.\\

\emph{Step I: The special case $n=0$.} We separately prove the three estimates \eqref{Gaussian:eq-Green-estimate-1}, \eqref{Gaussian:eq-Green-estimate-2}, and \eqref{Gaussian:eq-Green-estimate-3}. Due to the symmetry of the Green's functions, it suffices to treat the case $1\leq r \leq \rho\leq R$.
Using Lemma \ref{Gaussian:lem-representation}.\ref{Gaussian:item-representation-1}, we obtain the pointwise estimate
\begin{align*}
G_{0,k,R}(r,\rho) &= \frac{1}{1+2\gamma} \frac{R^{1+2\gamma}- \rho^{1+2\gamma}}{R^{1+2\gamma}-1} \Big( \rho^{-\gamma} r^{1+\gamma} - \rho^{-\gamma} r^{-\gamma} \Big) 
\leq \frac{1}{1+2\gamma} \cdot 1 \cdot r \lesssim r. 
\end{align*}
Similarly, we obtain the $\partial_r$-estimate 
\begin{align*}
\big| \partial_r G_{0,k,R}(r,\rho) \big| 
\leq \frac{1}{1+2\gamma} \frac{R^{1+2\gamma}- \rho^{1+2\gamma}}{R^{1+2\gamma}-1} \Big( (1+\gamma) \Big( \frac{r}{\rho}\Big)^\gamma + \gamma \rho^{-\gamma} r^{-1-\gamma} \Big) 
\lesssim 1. 
\end{align*}
In order to obtain the $\partial_\rho$-estimate, we first decompose
\begin{align}
\partial_\rho G_{k,0,R}(r,\rho) &= 
\frac{1}{1+2\gamma}  \partial_\rho \bigg( \frac{R^{1+2\gamma}- \rho^{1+2\gamma}}{R^{1+2\gamma}-1} \bigg)  \Big( \rho^{-\gamma} r^{1+\gamma} - \rho^{-\gamma} r^{-\gamma} \Big) \label{Gaussian:eq-Green-estimate-p1} \\ 
&+ \frac{1}{1+2\gamma}   \frac{R^{1+2\gamma}- \rho^{1+2\gamma}}{R^{1+2\gamma}-1} \partial_\rho  \Big( \rho^{-\gamma} r^{1+\gamma} - \rho^{-\gamma} r^{-\gamma} \Big).  \label{Gaussian:eq-Green-estimate-p2}
\end{align}
For the first summand \eqref{Gaussian:eq-Green-estimate-p1} and second summand \eqref{Gaussian:eq-Green-estimate-p2}, we then have that
\begin{equation*}
\big| \eqref{Gaussian:eq-Green-estimate-p1} \big| \lesssim \frac{\rho^{2\gamma}}{R^{1+2\gamma}} r \lesssim 1 \qquad \text{and} \qquad \big| \eqref{Gaussian:eq-Green-estimate-p2} \big| \lesssim 
\Big( \frac{r}{\rho} \Big)^{1+\gamma} +1 \lesssim 1.
\end{equation*}
This completes the proof of the three estimates \eqref{Gaussian:eq-Green-estimate-1}, \eqref{Gaussian:eq-Green-estimate-2}, and \eqref{Gaussian:eq-Green-estimate-3} in the special case $n=0$. \\ 

\emph{Step II: The general case $n\geq 1$.} We first prove for all $1\leq r \leq \rho \leq R$ that 
\begin{equation}\label{Gaussian:eq-Green-estimate-p3} 
\big| G_{n,k,R}(r,\rho) \big| \lesssim \sqrt{r\rho}.
\end{equation}
To this end, we use Cauchy-Schwarz, which implies
\begin{align*}
\Big| G_{n,k,R}(r,\rho)\Big|^2 &= \Big| \big \langle \delta_r, A_{n,k,R}^{-1} \delta_\rho \big\rangle_{L^2} \Big|^2 
\leq \big\| A_{n,k,R}^{-1/2}\delta_r \big\|_{L^2}^2 \cdot \big\| A_{n,k,R}^{-1/2} \delta_\rho \big\|_{L^2}^2 \\
&= \big\langle \delta_r , A_{n,k,R}^{-1} \delta_r \big\rangle  \cdot 
\big\langle \delta_\rho , A_{n,k,R}^{-1} \delta_\rho \big\rangle 
= G_{n,k,R}(r,r) G_{n,k,R}(\rho,\rho).  
\end{align*}
Thus, it suffices to treat the case $r=\rho$. Due to Lemma \ref{prelim:lem-soliton}, there exists a positive constant $c_{n,k}>0$ such that  
$A_{n,k,R} \succeq  c_{n,k} A_{0,k,R}$. 
Due to the operator monotonicity of the operator inverse, it follows that 
$A_{n,k,R}^{-1} \preceq c_{n,k}^{-1} A_{0,k,R}^{-1}$. 
At the level of the Green's function, it then follows that 
\begin{equation*}
G_{n,k,R}(r,r) \leq c_{n,k}^{-1} G_{0,k,R}(r,r) \lesssim r. 
\end{equation*}
This completes the proof of \eqref{Gaussian:eq-Green-estimate-p3}. \\

We now prove the desired estimates \eqref{Gaussian:eq-Green-estimate-1}, \eqref{Gaussian:eq-Green-estimate-2}, and \eqref{Gaussian:eq-Green-estimate-3}. Due to the symmetry of $G_{n,k,R}$, it suffices to prove \eqref{Gaussian:eq-Green-estimate-1} and \eqref{Gaussian:eq-Green-estimate-2}. We now recall the resolvent identity\footnote{Since we are using symmetry to only estimate the $r$-derivative, it would have been sufficient to use a first-order rather than second-order expansion in \eqref{Gaussian:eq-representation-p4}. For expository purposes, however, we prefer to work with the second-order expansion. As a result of the second-order expansion, both the $r$ and $\rho$-derivatives of  \eqref{Gaussian:eq-Green-estimate-p4}-\eqref{Gaussian:eq-Green-estimate-p6} can be estimated.} from Lemma \ref{Gaussian:lem-representation}.\ref{Gaussian:item-representation-2}, which yields
\begin{align}
    G_{n,k,R}(r,\rho) &= G_{0,k,R}(r,\rho) \label{Gaussian:eq-Green-estimate-p4}\\
    &+ k (k+1) \int_1^R \du \, G_{0,k,R}(r,u) 
    \, \frac{\cos\big( 2 Q_{n,k}(u)\big)-1}{u^2} \,  G_{0,k,R}(u,\rho)  \label{Gaussian:eq-Green-estimate-p5} \\
    &+ k^2 (k+1)^2 \int_1^R \du \int_1^R \dv  \bigg[ \, G_{0,k,R}(r,u) 
    \, \frac{\cos\big( 2 Q_{n,k}(u)\big)-1}{u^2} \, G_{n,k,R}(u,v) \label{Gaussian:eq-Green-estimate-p6}\\
    &\hspace{2.5ex}\times  
    \, \frac{\cos\big( 2 Q_{n,k}(v)\big)-1}{v^2} \, G_{0,k,R}(v,\rho) \bigg]. \notag  
    \end{align}
We emphasize that in all three terms \eqref{Gaussian:eq-Green-estimate-p4}, \eqref{Gaussian:eq-Green-estimate-p5}, and \eqref{Gaussian:eq-Green-estimate-p6}, the $r$ and $\rho$-variables only enter as arguments of $G_{0,k,R}$, which is crucial for upgrading the pointwise estimate \eqref{Gaussian:eq-Green-estimate-p3} to derivative estimates. 
It suffices to prove the pointwise estimate \eqref{Gaussian:eq-Green-estimate-1} and derivative estimate \eqref{Gaussian:eq-Green-estimate-2} separately for the three summands  \eqref{Gaussian:eq-Green-estimate-p4}, \eqref{Gaussian:eq-Green-estimate-p5}, and \eqref{Gaussian:eq-Green-estimate-p6}. \\

For the first summand \eqref{Gaussian:eq-Green-estimate-p4}, the estimates \eqref{Gaussian:eq-Green-estimate-1} and \eqref{Gaussian:eq-Green-estimate-2} have already been proven in the first step. For the second summand, Lemma \ref{prelim:lem-soliton} implies that 
\begin{align*}
\big| \eqref{Gaussian:eq-Green-estimate-p5} \big|
\lesssim \int_1^R \du \, \min(r,u) \times u^{-6} \times \min(u,\rho) \lesssim   \int_1^R \du \, u^{-4} \lesssim 1 \lesssim \min(r,\rho). 
\end{align*}
Similarly, we have that 
\begin{align*}
\big| \partial_r \eqref{Gaussian:eq-Green-estimate-p5} \big| 
\lesssim \int_1^R \du \,  \big| \partial_r G_{0,k,R}(r,u) \big| u^{-6} \min(u,\rho) 
\lesssim \int_1^R \du \, u^{-5} \lesssim 1. 
\end{align*}
It remains to treat the third summand \eqref{Gaussian:eq-Green-estimate-p6}. Using Lemma \ref{prelim:lem-soliton} and \eqref{Gaussian:eq-Green-estimate-p3}, it holds that 
\begin{align*}
    \big| \eqref{Gaussian:eq-Green-estimate-p6} \big|
    \lesssim \int_1^R \du \int_1^R \dv \, \min(r,u) \, u^{-6} \, \sqrt{uv} \, v^{-6} \, \min(v,\rho)  \lesssim    \int_1^R \du \int_1^R \dv \, u^{-9/2} v^{-9/2} \lesssim 1. 
\end{align*}
Similarly, we have that 
\begin{align*}
     \big| \partial_r \eqref{Gaussian:eq-Green-estimate-p6} \big|
    &\lesssim \int_1^R \du \int_1^R \dv \, \big| \partial_r G_{0,k,R}(r,u) \big| 
    u^{-6} \, \sqrt{uv} \, v^{-6} \, \min(v,\rho) \\
    &\lesssim \int_1^R \du \int_1^R \dv \, u^{-5} v^{-9/2} \lesssim 1. \qedhere
\end{align*}
\end{proof}

In the next lemma, we obtain a lower bound for the diagonal of the Green's function, which essentially matches the upper bound from Lemma \ref{Gaussian:lem-Green-estimate}.

\begin{lemma}[Lower bounds]\label{Gaussian:lem-lower-bound}
It holds for all $1\leq r \leq R$ that 
\begin{equation}
G_{n,k,R}(r,r) \gtrsim \Big( 1 - \frac{r}{R} \Big) \big( r-1 \big). 
\end{equation}
\end{lemma}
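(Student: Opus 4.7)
The plan is to reduce the general lower bound to the case $n=0$ via an operator comparison, and then to read off the bound from the explicit formula for $G_{0,k,R}$ in Lemma \ref{Gaussian:lem-representation}.\ref{Gaussian:item-representation-1}. For the comparison step, since $Q_{0,k}\equiv 0$ we have $A_{0,k,R}=-\partial_r^2+k(k+1)/r^2$, so
\begin{equation*}
A_{0,k,R}-A_{n,k,R}=\frac{k(k+1)}{r^2}\bigl(1-\cos(2Q_{n,k})\bigr)\geq 0
\end{equation*}
pointwise, which yields the quadratic form inequality $A_{n,k,R}\leq A_{0,k,R}$. Since both operators are positive definite (cf.\ Lemma \ref{prelim:lem-soliton}), operator monotonicity of the inverse on positive operators gives $A_{n,k,R}^{-1}\geq A_{0,k,R}^{-1}$. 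Translating this into an inequality for the integral kernels and letting the test function approximate a point mass at $r$ (using the continuity of both Green's functions provided by Lemma \ref{Gaussian:lem-Green-estimate} to pass to the pointwise limit) yields
\begin{equation*}
G_{n,k,R}(r,r)\geq G_{0,k,R}(r,r).
\end{equation*}

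It then suffices to prove $G_{0,k,R}(r,r)\gtrsim (1-r/R)(r-1)$. Specializing the formula of Lemma \ref{Gaussian:lem-representation}.\ref{Gaussian:item-representation-1} to $\rho=r$ gives
\begin{equation*}
G_{0,k,R}(r,r)=\frac{1}{1+2\gamma}\cdot\frac{R^{1+2\gamma}-r^{1+2\gamma}}{R^{1+2\gamma}-1}\cdot\bigl(r-r^{-2\gamma}\bigr).
\end{equation*}
For the first ratio, the elementary inequality $1-x^{1+2\gamma}\geq 1-x$ for $x\in[0,1]$ (valid since $1+2\gamma\geq 1$), combined with $R^{1+2\gamma}-1\leq R^{1+2\gamma}$, yields
\begin{equation*}
\frac{R^{1+2\gamma}-r^{1+2\gamma}}{R^{1+2\gamma}-1}\geq 1-\frac{r}{R}.
\end{equation*}
For the second factor, the function $g(r):=r-r^{-2\gamma}$ vanishes at $r=1$ and satisfies $g'(r)=1+2\gamma r^{-2\gamma-1}\geq 1$ on $[1,\infty)$, so $g(r)\geq r-1$ by integration. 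Multiplying the two lower bounds gives $G_{0,k,R}(r,r)\geq (1+2\gamma)^{-1}(1-r/R)(r-1)$, which is the desired inequality up to a constant depending only on $k$.

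The edge case $n=k=0$ is handled directly by the Brownian-bridge formula $G_{0,0,R}(r,r)=(R-r)(r-1)/(R-1)$ from the remark following Lemma \ref{Gaussian:lem-representation}, which is evidently $\geq (1-r/R)(r-1)$. The only mildly technical point in the general argument is the passage from the operator inequality $A_{n,k,R}^{-1}\geq A_{0,k,R}^{-1}$ to the pointwise statement on the diagonal of the Green's functions, since point masses are not in $L^2$; however, testing against $L^1$-normalized bumps concentrating at $r$ and appealing to the continuity of the Green's functions is standard and should present no real obstacle.
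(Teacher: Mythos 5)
Your proof is correct and follows essentially the same route as the paper: the comparison $A_{n,k,R}\preceq A_{0,k,R}$ from $\cos(2Q_{n,k})\leq 1$, operator monotonicity of the inverse to get $G_{n,k,R}(r,r)\geq G_{0,k,R}(r,r)$, and then the explicit formula for $G_{0,k,R}$ from Lemma \ref{Gaussian:lem-representation}. Your elementary lower bounds on the two factors are slightly more detailed than the paper's (which simply invokes $R\geq R_0\gg 1$), but the argument is the same.
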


\begin{proof}
Using the trivial estimate $\cos(2Q_{n,k})\leq 1$, it holds that 
\begin{equation*}
-\partial_r^2 + \frac{k(k+1)}{r^2} \cos\big( 2 Q_{n,k} \big)  \preceq -\partial_r^2 + \frac{k(k+1)}{r^2}. 
\end{equation*}
Due to the operator monotonicity of the inverse, it follows that $A_{n,k,R}^{-1} \succeq A_{0,k,R}^{-1}$. 
As a result, it follows for all $r\in [1,R]$ that 
\begin{equation*}
G_{n,k,R}(r,r) \geq G_{0,k,R}(r,r). 
\end{equation*}
Using Lemma \ref{Gaussian:lem-representation} and $R\geq R_0 \gg 1$, we obtain that
\begin{equation*}
G_{0,k,R}(r,r) = \frac{1}{1+2\gamma} \frac{R^{1+2\gamma}- r^{1+2\gamma}}{R^{1+2\gamma}-1} (r-r^{-2\gamma}) 
\gtrsim   \frac{R^{1+2\gamma}- r R^{2\gamma}}{R^{1+2\gamma}-1} (r-1) \gtrsim  \Big( 1 - \frac{r}{R} \Big) \big( r-1 \big),
\end{equation*}
which yields the desired estimate. 
\end{proof}

We now state and proof our last estimate for the Green's functions, which concerns the limit as $R\rightarrow \infty$. 

\begin{lemma}[Convergence of Green's functions]\label{Gaussian:lem-Green-convergence}
For all $L\geq 1$, it holds that 
\begin{equation}\label{Gaussian:eq-Green-convergence}
\lim_{R,R^\prime \rightarrow \infty} \int_1^L \dr \int_1^L \drho \, \big| G_{n,k,R}(r,\rho) - G_{n,k,R^\prime}(r,\rho) \big| =0. 
\end{equation}
\end{lemma}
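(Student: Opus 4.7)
The plan is to prove pointwise convergence of $G_{n,k,R}(r,\rho)$ on $[1,\infty)^2$ as $R \to \infty$ and then invoke dominated convergence: since $|G_{n,k,R}(r,\rho)| \lesssim \sqrt{r\rho} \leq L$ on $[1,L]^2$ uniformly in $R$ by Lemma \ref{Gaussian:lem-Green-estimate}, pointwise convergence will upgrade to $L^1([1,L]^2)$-convergence, and the $L^1$-Cauchy property \eqref{Gaussian:eq-Green-convergence} will then follow from the triangle inequality.

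For $n = 0$, pointwise convergence is immediate from the explicit formula in Lemma \ref{Gaussian:lem-representation}.\ref{Gaussian:item-representation-1}: for each $r \leq \rho$, the prefactor $(R^{1+2\gamma} - \rho^{1+2\gamma})/(R^{1+2\gamma} - 1)$ tends to $1$ as $R \to \infty$, yielding the explicit limit $G_{0,k,\infty}(r,\rho) = (1+2\gamma)^{-1}(\rho^{-\gamma}r^{1+\gamma} - \rho^{-\gamma}r^{-\gamma})$.

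For general $n \geq 1$, I would combine equicontinuity with uniqueness of the limit. The derivative estimates \eqref{Gaussian:eq-Green-estimate-2}--\eqref{Gaussian:eq-Green-estimate-3} show that the family $\{G_{n,k,R}|_{[1,M]^2} : R \geq M\}$ is uniformly bounded and equicontinuous for every $M \geq 1$, so Arzela--Ascoli and a diagonal extraction produce, for every sequence $R_j \to \infty$, a subsequence along which $G_{n,k,R_{j_\ell}}$ converges locally uniformly on $[1,\infty)^2$ to some limit $G_{n,k,\infty}$. To identify this limit, I would pass to $R \to \infty$ in the resolvent identity of Lemma \ref{Gaussian:lem-representation}.\ref{Gaussian:item-representation-2}: using the decay $|\cos(2Q_{n,k}(u)) - 1| \lesssim u^{-2(k+1)}$ (which follows from Lemma \ref{prelim:lem-soliton}) together with the pointwise bounds of Lemma \ref{Gaussian:lem-Green-estimate}, for fixed $r,\rho \in [1,L]$ the second-term integrand is dominated by $L^2 u^{-2k-4}$ and the third-term integrand by $L^2 u^{-2k-7/2} v^{-2k-7/2}$, both integrable uniformly in $R$. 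Dominated convergence then moves the subsequential limit inside the integrals, and the resulting integral equation together with $G_{n,k,\infty}(1,\rho)=0$ and the uniform bound $|G_{n,k,\infty}(r,\rho)| \lesssim \sqrt{r\rho}$ should characterize $G_{n,k,\infty}$ uniquely as the Green's function of the half-line operator $A_{n,k}$ on $[1,\infty)$. Since all subsequential limits must coincide with this unique object, the full family converges pointwise.

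The main obstacle is establishing the uniqueness of the limit: on the unbounded interval $[1,\infty)$ there is no second Dirichlet condition, so one must single out the correct decaying fundamental solution of $A_{n,k}\psi = 0$ at infinity. Since $\cos(2Q_{n,k}(r)) \to 1$ as $r \to \infty$, the operator is asymptotically $-\partial_r^2 + k(k+1)/r^2$, whose fundamental solutions are $r^{1+\gamma}$ and $r^{-\gamma}$ with $\gamma \geq 1$ for $k \geq 1$. The uniform bound $|G_{n,k,\infty}(r,\rho)| \lesssim \sqrt{r\rho}$ then rules out the growing branch, pinning down the decaying solution at infinity and hence the Green's function uniquely.
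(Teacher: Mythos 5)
Your route is genuinely different from the paper's. The paper does not pass through any compactness or limit-identification argument at all: it fixes $\rho\in[1,L]$, forms the weighted difference $w(r)=(R/r)^\eta\big(G_{n,k,R'}(r,\rho)-G_{n,k,R}(r,\rho)\big)$, observes that $w$ solves a boundary value problem whose only data is the boundary value $w(R)=G_{n,k,R'}(R,\rho)=O(L)$, and then runs a weighted energy estimate (integration by parts, Hardy's inequality, and the coercivity of Lemma \ref{prelim:lem-soliton}) to get the quantitative bound $\sup_{\rho\in[1,L]}\int_1^L \dr\,|G_{n,k,R'}(r,\rho)-G_{n,k,R}(r,\rho)|^2\lesssim L^{1+\eta}R^{-\eta}$. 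This gives the Cauchy property directly, with a rate, and never needs to construct or characterize a limiting half-line Green's function. Your approach (Arzel\`a--Ascoli from the derivative bounds of Lemma \ref{Gaussian:lem-Green-estimate}, passage to the limit in the resolvent identity of Lemma \ref{Gaussian:lem-representation} by dominated convergence, then uniqueness of the limit) is a legitimate alternative strategy and the domination estimates you give are correct.

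However, the uniqueness step as written has a genuine gap, precisely at the point you flag as "the main obstacle." The difference of two subsequential limits is a distributional solution $h$ of $A_{n,k}h=0$ on $(1,\infty)$ with $h(1)=0$ and $|h(r)|\lesssim\sqrt{r}$. Your argument rules out only the case where this homogeneous solution contains the growing branch $r^{1+\gamma}$; it does not address the resonant case in which the (one-dimensional) space of homogeneous solutions vanishing at $r=1$ happens to coincide with the decaying branch $\sim r^{-\gamma}$. In that case $h$ satisfies your growth bound and nothing in your argument forces $h=0$. To close this you must invoke the coercivity of Lemma \ref{prelim:lem-soliton}: a decaying homogeneous solution with $h(1)=0$ lies in $\dot H^1_0$ (after a limiting/cut-off argument to extend the lemma from $[1,R]$ to $[1,\infty)$, since the lemma is only stated on finite intervals), the boundary terms in the integration by parts vanish, and $0=\int_1^\infty \dr\, h\,A_{n,k}h\geq c_{n,k}\int_1^\infty\dr\,|\partial_r h|^2$ forces $h\equiv 0$. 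A second, smaller point: identifying the limit via the integral equation alone is not enough, because uniqueness of solutions of that fixed-point equation is not obvious (the kernel is not small); you should instead pass to the limit in the weak formulation of $A_{n,k,R}G(\cdot,\rho)=\delta_\rho$, which local uniform convergence permits, and then run the ODE uniqueness argument above. With these two repairs your proof goes through, but note that it delivers only qualitative convergence, whereas the paper's energy method is both shorter and quantitative.
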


Our argument is based on a weighted energy estimate. 

\begin{proof}
For expository purposes, we separate the proof into four steps. \\

\emph{Step 1: Setup.} Due to the limit and symmetry in $R$ and $R^\prime$, we may assume that $R^\prime \geq R \gg L$. We further fix $\rho\in [1,L]$ and let $0< \eta \ll 1$ remain to be chosen. We then define the weighted difference $w=w_{n,k,R,R^\prime,\rho}\colon [1,R]\rightarrow \R$ by 
\begin{equation}\label{Gaussian:eq-Green-convergence-p0}
w(r) := \Big( \frac{R}{r}\Big)^\eta \Big( G_{n,k,R^\prime}(r,\rho)- G_{n,k,R}(r,\rho) \Big). 
\end{equation}
A straightforward calculation shows that $w$ solves the initial-boundary value problem 
\begin{equation}\label{Gaussian:eq-Green-convergence-p1}
\begin{cases}
\Big( - \partial_r^2 + \frac{k(k+1)}{r^2} \cos\big( 2 Q_{n,k} \big) \Big) w(r) = - 2 \eta r^{-1} \partial_r w(r) + \eta (1-\eta) r^{-2} w(r), \\[1ex]
w(1) =0, \, w(R) = G_{k,n,R^\prime}(R,\rho). 
\end{cases}
\end{equation}
From Lemma \ref{Gaussian:lem-Green-estimate}, it also follows that
\begin{equation}\label{Gaussian:eq-Green-convergence-p2}
|w(R)| \lesssim |\rho| \lesssim L \qquad \text{and} \qquad |\partial_r w(R) | \lesssim 1. 
\end{equation}
\emph{Step 2: Weighted energy estimate.} In the second step, we prove the energy estimate
\begin{equation}\label{Gaussian:eq-Green-convergence-p3}
\int_1^R \dr \Big( (\partial_r w)^2 + \frac{k(k+1)}{r^2} \cos\big( 2 Q_{n,k}\big) w^2 \Big) 
\lesssim L + \eta \int_1^R \dr \, (\partial_r w)^2. 
\end{equation}
In order to prove \eqref{Gaussian:eq-Green-convergence-p3}, we multiply the ordinary differential equation in \eqref{Gaussian:eq-Green-convergence-p1} with $w$ and integrate by parts, which yields the identity 
\begin{equation}\label{Gaussian:eq-Green-convergence-p4}
\begin{aligned}
&\int_1^R \dr \Big( (\partial_r w)^2 + \frac{k(k+1)}{r^2} \cos\big( 2 Q_{n,k}\big) w^2 \Big)  \\
=&\, w(r) \partial_r w(r)\Big|_{r=1}^R - 2 \eta \int_1^R \dr \, r^{-1} w \partial_r w + \eta (1-\eta) \int_1^R \dr \, r^{-2} w^2. 
\end{aligned}
\end{equation}
Using the boundary conditions in \eqref{Gaussian:eq-Green-convergence-p1} and the estimates in \eqref{Gaussian:eq-Green-convergence-p2}, the boundary terms in \eqref{Gaussian:eq-Green-convergence-p4} can be estimated by 
\begin{equation*}
\bigg| w(r) \partial_r w(r)\Big|_{r=1}^R \bigg| 
= \bigg| w(R) \partial_r w(R) \bigg| \lesssim L. 
\end{equation*}
The second and third term in \eqref{Gaussian:eq-Green-convergence-p4} can be estimated using Cauchy-Schwarz and Hardy's inequality. This completes the proof of \eqref{Gaussian:eq-Green-convergence-p3}. \\

\emph{Step 3: Positive definiteness.} In this step, we show that 
\begin{equation}\label{Gaussian:eq-Green-convergence-p5}
\begin{aligned}
\int_1^R \dr \Big( (\partial_r w)^2 + \frac{k(k+1)}{r^2} \cos\big( 2 Q_{n,k}\big) w^2 \Big) + L  \gtrsim \int_1^R \dr \, \Big( (\partial_r w)^2 + r^{-2} w^2 \Big). 
\end{aligned}
\end{equation}
In order to utilize Lemma \ref{prelim:lem-soliton}, we need to replace $w$ with an element of $H_0^1([1,R])$. To this end, we let $\chi \colon \R \rightarrow [0,1]$ be a smooth cut-off function satisfying $\chi|_{[-1/4,1/4]}=1$ and $\chi|_{\R\backslash [-1/2,1/2]}=0$. We then define $\widetilde{w}$ by 
\begin{equation}\label{Gaussian:eq-Green-convergence-p6}
\widetilde{w}(r):= w(r) - \chi\Big( \frac{r-R}{R} \Big) w(R)
\end{equation}
and note that $\widetilde{w}$ satisfies the boundary conditions $\widetilde{w}(1)=\widetilde{w}(R)=0$. Using \eqref{Gaussian:eq-Green-convergence-p2}, it follows that 
\begin{equation}\label{Gaussian:eq-Green-convergence-p7}
\begin{aligned}
&\int_1^R \dr \Big( (\partial_r \widetilde{w} - \partial_r w)^2 + r^{-2} (\widetilde{w}-w)^2 \Big) \\ 
\lesssim& \, \bigg( \frac{1}{R^2} \int_1^R \dr \, \chi^\prime\Big( \frac{r-R}{R} \Big)^2 + \int_{R/2}^R \dr \, r^{-2} \bigg) w(R)^2  \\
\lesssim& \, R^{-1} w(R)^2 \lesssim R^{-1} L^2 \lesssim L. 
\end{aligned}
\end{equation}
The desired estimate \eqref{Gaussian:eq-Green-convergence-p5} can now be derived from Lemma \ref{prelim:lem-soliton} (applied to $\widetilde{w}$) and \eqref{Gaussian:eq-Green-convergence-p7}. \\

\emph{Step 4: Conclusion.} Provided that $0<\eta \ll 1$ is sufficiently small, \eqref{Gaussian:eq-Green-convergence-p3} and \eqref{Gaussian:eq-Green-convergence-p5} yield
\begin{equation}\label{Gaussian:eq-Green-convergence-p8}
\int_1^R \dr \, r^{-2} w^2 \leq \int_1^R \dr \, \Big( (\partial_r w)^2 + r^{-2} w^2 \Big) \lesssim L. 
\end{equation}
By restricting the domain of integration to $[1,L]$, inserting the definition of $w$ from \eqref{Gaussian:eq-Green-convergence-p0}, and recalling that $\rho\in[1,L]$ is arbitrary, it follows that
\begin{equation}\label{Gaussian:eq-Green-convergence-p9}
\sup_{\rho\in [1,L]} \int_1^L \dr \, \big| G_{n,k,R^\prime}(r,\rho)- G_{n,k,R}(r,\rho)\big|^2 \lesssim L^{1+\eta} R^{-\eta}. 
\end{equation}
Together with Hölder's inequality, this implies the desired estimate \eqref{Gaussian:eq-Green-convergence}. 
\end{proof}

\subsection{Control of Gaussian measure}\label{section:Gaussian-control}  We first recall a special case of Mercer's theorem (cf. \cite[Section III.5.4]{CH53}), which allows us to utilize our 
Green's function estimates (Lemma \ref{Gaussian:lem-Green-estimate}). 

\begin{lemma}\label{Gaussian:lem-Mercer}
For all $1\leq r,\rho\leq R$,  it holds that 
\begin{equation*}
\E_{\scrg_{n,k,R}} \Big[ \psi(r) \psi(\rho) \Big] = G_{n,k,R}(r,\rho). 
\end{equation*}
\end{lemma}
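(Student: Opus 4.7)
The plan is to combine the spectral representation of samples from Remark \ref{Gaussian:rem-representation} with the spectral decomposition of the integral kernel of $A_{n,k,R}^{-1}$, which is the essential content of Mercer's theorem in this setting.

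First, I would use Remark \ref{Gaussian:rem-representation} to write a sample as
\begin{equation*}
\psi = \sum_{m=1}^{\infty} \frac{g_m}{\lambda_m} e_m,
\end{equation*}
where $(e_m, \lambda_m^2)_{m\geq 1}$ is the spectral data of $A_{n,k,R}$ (with the $e_m$ an orthonormal basis of $L^2([1,R])$) and $(g_m)$ are i.i.d. standard real Gaussians. Using independence and the identity $\mathbb{E}[g_m g_{m'}] = \delta_{mm'}$, a formal computation then gives
\begin{equation*}
\mathbb{E}_{\scrg_{n,k,R}}\!\big[\psi(r)\psi(\rho)\big] = \sum_{m,m'} \frac{\mathbb{E}[g_m g_{m'}]}{\lambda_m \lambda_{m'}} e_m(r) e_{m'}(\rho) = \sum_{m=1}^\infty \frac{e_m(r) e_m(\rho)}{\lambda_m^2}.
\end{equation*}

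Next, I would identify the right-hand side with $G_{n,k,R}(r,\rho)$. By the spectral theorem, the operator $A_{n,k,R}^{-1}$ has integral kernel $\sum_m \lambda_m^{-2} e_m(r) e_m(\rho)$, and by Definition \ref{Gaussian:def-Green} this kernel is exactly the Green's function $G_{n,k,R}(r,\rho)$.

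The one technical point is justifying (a) the pointwise evaluation $\psi(r)$ and (b) the interchange of expectation and summation. Both follow from standard Sturm--Liouville theory on the bounded interval $[1,R]$: the eigenfunctions $e_m$ are continuous with uniformly controlled sup norms, Weyl asymptotics yield $\lambda_m^2 \gtrsim m^2$ (so $\sum_m \lambda_m^{-2} < \infty$, consistent with $A_{n,k,R}^{-1}$ being trace-class), and the diagonal bound $G_{n,k,R}(r,r) \lesssim r$ from Lemma \ref{Gaussian:lem-Green-estimate} shows that $\psi(r)$ has finite variance for each $r \in [1,R]$. These facts together give absolute convergence of the kernel series and, via dominated convergence applied to the partial sums $\psi_M := \sum_{m \leq M} \lambda_m^{-1} g_m e_m$, justify the passage to the limit. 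I do not expect any serious obstacle here; the proof is essentially a bookkeeping exercise built on the existing estimates.
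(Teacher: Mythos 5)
Your proposal is correct and follows exactly the paper's argument: both use the spectral representation of $\scrg_{n,k,R}$ from Remark \ref{Gaussian:rem-representation} to compute the covariance as $\sum_m \lambda_m^{-2} e_m(r) e_m(\rho)$ and identify this with the Green's function via Mercer's theorem. Your additional remarks on justifying pointwise evaluation and the interchange of sum and expectation are sound, though the paper treats these as implicit.
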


\begin{proof}
We rely on the representation of the Gaussian measure $\scrg_{n,k,R}$ from Remark \ref{Gaussian:rem-representation}. From this representation, it follows that
\begin{equation*}
    \E_{\scrg_{n,k,R}} \Big[ \psi(r) \psi(\rho) \Big] = \sum_{m=1}^\infty \frac{1}{\lambda_m^2} e_m(r) e_m(\rho) = G_{n,k,R}(r,\rho). \qedhere
\end{equation*}
\end{proof}

Equipped with Lemma \ref{Gaussian:lem-Mercer}, we now have all ingredients for our proof of Proposition \ref{Gaussian:prop-Gaussian}. 

\begin{proof}[Proof of Proposition \ref{Gaussian:prop-Gaussian}]
Using the definition of the $\Czero([1,R])$-norm, the estimate \eqref{Gaussian:eq-Gaussian-e0} can be reduced\footnote{In fact, \eqref{Gaussian:eq-Gaussian-e2} is stronger than the required estimate, since it contains $\max(r,\rho)^{-\epsilon}$ instead of $\max(r,\rho)^{-1/2-\epsilon}$.} to the two estimates 
\begin{align}
\E_{\scrg_{n,k,R}} \bigg[ \sup_{1\leq r\leq R} \bigg(\frac{|\psi(r)|}{r^{1/2+\epsilon}}\bigg)^p \bigg]^{1/p} &\lesssim_{\epsilon} \sqrt{p} \label{Gaussian:eq-Gaussian-e1},  \\ 
\E_{\scrg_{n,k,R}} \bigg[ \sup_{\substack{1\leq r,\rho \leq R \colon \\ r\neq s}} \bigg(   \frac{|\psi(r)-\psi(\rho)|}{\max(r,\rho)^\epsilon \cdot |r-\rho|^{(1-\epsilon)/2}}\bigg)^p \bigg]^{1/p} &\lesssim_{\epsilon} \sqrt{p} \label{Gaussian:eq-Gaussian-e2}. 
\end{align}
It suffices to treat the case $p\geq 10 \epsilon^{-1}$, since the case $p\leq 10 \epsilon^{-1}$ then follows from Hölder's inequality. 
The following argument is a combination of Mercer's theorem (Lemma \ref{Gaussian:lem-Mercer}), the Green's function estimate (Lemma \ref{Gaussian:lem-Green-estimate}), and Kolmogorov's continuity theorem (Lemma \ref{prelim:lem-kolmogorov}). \\

Using Mercer's theorem (Lemma \ref{Gaussian:lem-Mercer}) and the Green's function estimate (Lemma \ref{Gaussian:lem-Green-estimate}), we obtain for all $1\leq r,\rho \leq R$ that 
\begin{align*}
\E_{\scrg_{n,k,R}} \Big[ |\psi(r) - \psi(\rho)|^2 \Big] 
&= \E_{\scrg_{n,k,R}} \Big[ \psi(r)^2 \Big] - 2 \E_{\scrg_{n,k,R}} \Big[ \psi(r) \psi(\rho) \Big] + \E_{\scrg_{n,k,R}} \Big[ \psi(\rho)^2 \Big] \\
&= G_{n,k,R}(r,r) - 2 G_{n,k,R}(r,\rho) + G_{n,k,R}(\rho,\rho) \\ 
&\lesssim \Big( \max_{1\leq u \leq R} |\partial_r G_{n,k,R}(u,\rho)| 
+ \max_{1\leq u \leq R} |\partial_\rho G_{n,k,R}(r,u)| \Big) |r-\rho| \\
&\lesssim |r-\rho|. 
\end{align*}
Using Gaussian hypercontractivity (Lemma \ref{prelim:lem-properties-Gaussian}), we obtain for all $p\geq 1$ that 
\begin{equation*}
\E_{\scrg_{n,k,R}} \Big[ |\psi(r) - \psi(\rho)|^p \Big]^{1/p} \lesssim \sqrt{p} |r-\rho|.  
\end{equation*}
We now let $1\leq L\leq R$. Using Kolmogorov's continuity theorem (Lemma \ref{prelim:lem-kolmogorov}) with $\alpha=1/2-1/p$ and $\beta=(1-\epsilon)/2$ and using that $p\geq 10\epsilon^{-1}$, we obtain that 
\begin{align*}
&\E_{\scrg_{n,k,R}} \bigg[ \sup_{\substack{1\leq r,\rho \leq R \colon \\ r\neq s, \\ 
\max(r,\rho) \in [L/4,L]}} \bigg(   \frac{|\psi(r)-\psi(\rho)|}{\max(r,\rho)^\epsilon \cdot |r-\rho|^{(1-\epsilon)/2}}\bigg)^p \bigg]^{1/p} \\
\lesssim_\epsilon& L^{-\epsilon} \, \E_{\scrg_{n,k,R}} \bigg[ \sup_{\substack{1\leq r,\rho \leq L \colon \\ r\neq s}} \bigg(   \frac{|\psi(r)-\psi(\rho)|}{ |r-\rho|^{(1-\epsilon)/2}}\bigg)^p \bigg]^{1/p} \\
\lesssim_\epsilon& \sqrt{p} L^{-\epsilon} L^{\frac{1}{p}+\frac{1}{2}-\frac{1-\epsilon}{2}} \lesssim \sqrt{p} L^{-\epsilon/4}. 
\end{align*}

After summing over all dyadic $L\in [1,R]$, this yields the Hölder estimate \eqref{Gaussian:eq-Gaussian-e2}. The growth estimate \eqref{Gaussian:eq-Gaussian-e1} then directly follows from the boundary condition $\psi(1)=0$ and the Hölder estimate \eqref{Gaussian:eq-Gaussian-e2}. 
It now only remains to prove the lower bound \eqref{Gaussian:eq-Gaussian-e3}. Using Lemma \ref{Gaussian:lem-Mercer}, it holds that 
\begin{equation*}
\E_{\scrg_{n,k,R}} \Big[ \psi(r)^2 \Big] = G_{n,k,R}(r,r). 
\end{equation*}
Using Lemma \ref{Gaussian:lem-lower-bound}, we directly obtain the desired estimate.
\end{proof} 

It remains to prove Lemma \ref{Gaussian:lem-infinite}, which concerns the infinite-volume limit of the Gaussian measures. 

\begin{proof}[Proof of Lemma \ref{Gaussian:lem-infinite}]
It suffices to prove the existence of the weak limit 
\begin{equation}\label{Gaussian:eq-infinite-p1} 
\scrg_{n,k,(L)} := 
 \operatorname{w-lim}\displaylimits_{R\rightarrow \infty} 
\big( \Rest[L][R]\big)_\# \scrg_{n,k,R}
\end{equation} 
on $\Czero_{(0)}([1,L])$ for all $L\geq 1$. Indeed, once \eqref{Gaussian:eq-infinite-p1} has been established, the Gaussian measure $\scrg_{n,k}$ can be constructed from $(\scrg_{n,k,(L)})_{L\geq 1}$ via Kolmogorov's extension theorem. From Proposition \ref{Gaussian:prop-Gaussian}, it follows that the Gaussian measures $((\Rest[L][R])_\# \scrg_{n,k,R})_{R\geq 1}$ are tight on $\Czero_{(0)}([1,L])$. Due to Prokhorov's theorem, it therefore only remains to establish the uniqueness of weak subsequential limits of $((\Rest[L][R])_\# \scrg_{n,k,R})_{R\geq 1}$. For any $\xi_L \in C^\infty_c((1,L))$, the law of the random variable
\begin{equation}\label{Gaussian:eq-infinite-p2}
\psi_L \in \Czero_{(0)}([1,L]) \mapsto \int_1^L \dr \,  \xi_L(r) \psi_L(r) 
\end{equation}
with respect to the Gaussian measure $(\Rest[L][R])_\# \scrg_{n,k,R}$ is a normal distribution with mean zero and variance
\begin{equation}\label{Gaussian:eq-infinite-p3}
\int_1^L \dr \int_1^L \drho \, G_{n,k,R}(r,\rho) \xi_L(r) \xi_L(\rho). 
\end{equation}
In order to prove the uniqueness of weak subsequential limits, it therefore suffices to prove the convergence of \eqref{Gaussian:eq-infinite-p2} as $R\rightarrow \infty$. This follows directly from the convergence of the Green's functions $G_{n,k,R}$ as stated in Lemma \ref{Gaussian:lem-Green-convergence}. 
\end{proof}

\section{Existence of the Gibbs measures}\label{section:Gibbs}

In this section, we construct the Gibbs measures. As in Section \ref{section:Gaussian}, we continue to work with the unknown $\psi_R$ from \eqref{prelim:eq-change-of-variables}. In order to distinguish between the Gibbs measures in $\phi_R$ and $\psi_R$, we denote the corresponding Gibbs measures by $\vec{\mu}_{n,k,R}$ and $\vec{\nu}_{n,k,R}$, respectively. Throughout this section, we primarily work with $\vec{\nu}_{n,k,R}$, and later convert our result to $\vec{\mu}_{n,k,R}$. \\

In the first definition of this section, we introduce the Gibbs measures corresponding to the frequency-truncated $k$-equivariant wave maps equation \eqref{prelim:eq-psi-R-N}.

\begin{definition}[Frequency-truncated Gibbs measures]
Let $n\geq 0$, $k\geq 1$, $R\geq R_0$, and $N\geq 1$. Then, we define 
\begin{equation}\label{Gibbs:eq-Gibbs-def}
\begin{aligned}
\nu^{(N)}_{n,k,R} := \big( \mathcal{Z}_{n,k,R}^{(N)}\big)^{-1} \exp \big( - V^{(N)}_{n,k,R} \big) \scrg_{n,k,R}. 
\end{aligned}
\end{equation}
In $\eqref{Gibbs:eq-Gibbs-def}$,  $\mathcal{Z}^{(N)}_{n,k,R}>0$ is a normalization constant,  $\scrg_{n,k,R}$ is as in Definition \ref{Gaussian:def-Gaussian},
\begin{align}
V^{(N)}_{n,k,R}(\psi_R) &:= \frac{k(k+1)}{2} \int_1^L \dr \, \mathscr{V}^{(N)}_{n,k}(\psi_R) \label{Gibbs:eq-V-N}, \\ 
\text{and} \quad \mathscr{V}^{(N)}_{n,k}(\psi_R)&:=  \label{Gibbs:eq-scrV-N}
 \sin^2\big(Q_{n,k}+r^{-1} \Proj \psi_R\big) -   \sin^2\big( Q_{n,k} \big)   \\
 &\hspace{0.5ex}-\sin  \big( 2 Q_{n,k} \big) r^{-1} \Proj \psi_R  -  \cos\big( 2 Q_{n,k} \big) (r^{-1} \psi_R)^2. \notag 
 \end{align}
Furthermore, we also define
\begin{equation}
\vec{\nu}^{(N)}_{n,k,R}:= \nu^{(N)}_{n,k,R} \otimes \scrw_R, 
\end{equation}
where $\scrw_R$ is as in Definition \ref{Gaussian:def-white-noise}. 
\end{definition}
 We emphasize that the quadratic term in \eqref{Gibbs:eq-scrV-N} contains $\psi_R$ and not $\Proj \psi_R$. 
 
 \begin{remark}[$\scrg_{n,k,R}$ vs. $\scrg_{0,0,R}$]
 Even for fixed $R\geq R_0$ and $N\geq 1$, it is not entirely obvious that that the Gibbs measure $\nu_{n,k,R}$ from \eqref{Gibbs:eq-Gibbs-def} is well-defined. In order for $\nu_{n,k,R}$ to be well-defined, it is necessary that 
 \begin{equation*}
\exp \Big( \frac{k(k+1)}{2} \int_1^R \dr \, \frac{\cos\big(2Q_{n,k}\big)}{r^2} \psi_R^2(r) \Big) \in L^1\big( \scrg_{n,k,R} \big).
 \end{equation*}
 However, this follows easily from the fact that the covariance operator of $\scrg_{n,k,R}$ is 
 \begin{equation*}
    A_{n,k,R}^{-1} = \Big( - \partial_r^2 + \frac{k(k+1)}{2r^2} \cos\big( 2Q_{n,k} \big) \Big)^{-1}
 \end{equation*}
 and that, for any fixed $R\geq R_0$, $(-\partial_r^2)^{-1}$ is trace-class on $L^2([1,R])$. From similar considerations, it also follows that 
 \begin{equation}\label{Gibbs:eq-switch-Gaussian}
 \begin{aligned}
 \mathrm{d}\nu_{n,k,R}^{(N)} (\psi_R) 
 =& \big( \widetilde{\mathcal{Z}}^{(N)}_{n,k,R}\big)^{-1} \exp \bigg( - \tfrac{k(k+1)}{2} 
\int_1^R \dr \Big( \sin^2( Q_{n,k} + r^{-1} \Proj \psi_R) 
- \sin^2(Q_{n,k}) \\
&\hspace{19ex}- \sin(2Q_{n,k}) r^{-1} \Proj \psi_R \Big) \bigg) \, 
\mathrm{d} \scrg_{0,0,R} \big( \psi_R). 
 \end{aligned}
 \end{equation}
 In other words, the Gibbs measure can also be written with respect to $\scrg_{0,0,R}$ rather than $\scrg_{n,k,R}$. The identity \eqref{Gibbs:eq-switch-Gaussian} is useful when thinking about the invariance of the Gibbs measure for any finite $R\geq R_0$, but will not be useful in the infinite-volume limit $R\rightarrow \infty$. 
 \end{remark}

 We can now state the main proposition of this section, which contains the construction of Gibbs measures on finite and semi-infinite intervals. 
 
 \begin{proposition}[Construction of Gibbs measures]\label{Gibbs:prop-Gibbs}
 Let $n\geq 0$, let $k \geq 1$, let $\alpha:=1/2-\delta$, and let $\kappa:=-1/2-\delta$. Then, we have the following two properties:
 \begin{enumerate}[label=(\roman*)]
     \item\label{Gibbs:item-ultraviolet} (Finite interval) Let $R_0 \leq R <\infty$. As $N\rightarrow \infty$, $\nu^{(N)}_{n,k,R}$ converges in total variation to a unique limit $\nu_{n,k,R}$. Furthermore, it holds that
     \begin{equation*}
    \mathrm{d}\nu_{n,k,R}(\psi_R)
= \mathcal{Z}_{n,k,R}^{-1} \exp \big( - V_{n,k,R}(\psi_R) \big) 
\mathrm{d} \scrg_{n,k,R} \big( \psi_R \big). 
     \end{equation*}

     \item\label{Gibbs:item-infrared} (Semi-infinite interval) There exists a unique probability measure $\nu_{n,k}$ on $\Czero_0([1,\infty))$ which satisfies 
     \begin{equation}\label{Gibbs:eq-infrared-e1}
        \big( \Rest[L][\infty] \big)_\# \nu_{n,k}
    = \operatorname{w-lim}\displaylimits_{R\rightarrow \infty} 
    \big( \Rest[L][R] \big)_\# \nu_{n,k,R}
     \end{equation}
     for all $L\geq 1$. In \eqref{Gibbs:eq-infrared-e1}, the limit refers to the weak limit on  $\Czero_{(0)}([1,L])$. Furthermore, it holds that
     \begin{equation}
        \mathrm{d}\nu_{n,k}(\psi) = \mathcal{Z}_{n,k}^{-1} \exp \Big( - V_{n,k}(\psi) \Big) \mathrm{d}\scrg_{n,k}(\psi). 
     \end{equation}
 \end{enumerate}
 \end{proposition}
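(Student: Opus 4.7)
The plan proceeds in two parts.

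\emph{Part (i).} For fixed $R\geq R_0$, total variation convergence of $\nu^{(N)}_{n,k,R}$ to $\nu_{n,k,R}$ follows from $L^1(\scrg_{n,k,R})$-convergence of the densities $\exp(-V^{(N)}_{n,k,R})$, which I would obtain by combining almost sure convergence with uniform integrability. Since $\scrV_{n,k}(\psi_R)(r)$ is the third-order Taylor remainder of $b\mapsto\sin^2(Q_{n,k}+b)$ at $b=r^{-1}\psi_R$, and $\scrV^{(N)}_{n,k}(\psi_R)$ differs from $\scrV_{n,k}(\psi_R)$ only through $\Proj\psi_R$ replacing $\psi_R$ in the bounded terms, one has a pointwise bound of the form
\begin{equation*}
|\scrV_{n,k}(\psi_R)(r)|+|\scrV^{(N)}_{n,k}(\psi_R)(r)| \lesssim r^{-2}\bigl(\|\psi_R\|_{L^\infty([1,R])}^2+\|\Proj\psi_R\|_{L^\infty([1,R])}^2\bigr),
\end{equation*}
uniformly in $N$ via Lemma \ref{prelim:lem-projection}, together with pointwise convergence $\scrV^{(N)}_{n,k}(\psi_R)\to\scrV_{n,k}(\psi_R)$ once $\Proj\psi_R\to\psi_R$ uniformly, which holds for Hölder continuous $\psi_R\in\Czero_0([1,R])$ by standard convergence of Fourier sine series. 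Combined with Proposition \ref{Gaussian:prop-Gaussian}, dominated convergence yields $V^{(N)}_{n,k,R}(\psi_R)\to V_{n,k,R}(\psi_R)$ $\scrg_{n,k,R}$-a.s. The technical heart is the uniform-in-$N$ exponential moment bound
\begin{equation*}
\sup_{N\geq 1}\,\E_{\scrg_{n,k,R}}\bigl[\exp(-p V^{(N)}_{n,k,R})\bigr] < \infty
\end{equation*}
for some $p>1$, obtained via the Boué--Dupuis variational formula applied to the spectral representation in Remark \ref{Gaussian:rem-representation}. Following Remark \ref{Gibbs:rem-expansion}, the objective is expanded around the drift term rather than the Gaussian fluctuation: this is essential because the counterterm $-\cos(2Q_{n,k})(r^{-1}\psi_R)^2$ in $\scrV_{n,k}$ can have the unfavourable sign, and only by pairing it with the drift's control norm can one absorb it via the coercivity of $A_{n,k,R}$ in Lemma \ref{prelim:lem-soliton}.

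\emph{Part (ii).} I would proceed in three steps. First, construct $\nu_{n,k}$ directly: for $\psi\sim\scrg_{n,k}$, Proposition \ref{Gaussian:prop-Gaussian} gives $|r^{-1}\psi(r)|\lesssim r^{-1/2+\delta}$, so the Taylor remainder estimate yields $|\scrV_{n,k}(\psi)(r)|\lesssim r^{-3/2+3\delta}$, integrable on $[1,\infty)$; hence $V_{n,k}(\psi)$ is $\scrg_{n,k}$-a.s.~finite. A Boué--Dupuis argument parallel to part (i), applied directly to $\scrg_{n,k}$, gives $\exp(-V_{n,k})\in L^1(\scrg_{n,k})$, defining $\nu_{n,k}$ via the stated density. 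Second, for tightness of $\{(\Rest[L][R])_\#\nu_{n,k,R}\}_{R\geq R_0}$ on $\Czero_{(0)}([1,L])$, I extend part (i)'s argument to be uniform in both $R$ and $N$, using Lemma \ref{Gaussian:lem-infinite} to transfer the Gaussian bounds; this yields uniform moment estimates in slightly stronger Hölder/growth norms, from which tightness follows by compact embedding. Third, for any bounded continuous $F$ on $\Czero_{(0)}([1,L])$, I write
\begin{equation*}
\E_{\nu_{n,k,R}}\!\bigl[F(\Rest[L][R]\psi_R)\bigr] = \frac{\E_{\scrg_{n,k,R}}\!\bigl[F(\Rest[L][R]\psi_R)\,e^{-V_{n,k,R}(\psi_R)}\bigr]}{\E_{\scrg_{n,k,R}}\!\bigl[e^{-V_{n,k,R}(\psi_R)}\bigr]},
\end{equation*}
and pass to $R\to\infty$ using Lemma \ref{Gaussian:lem-infinite}, the uniform $L^p$-control of the exponentials from step two, and the tail estimate $\int_R^\infty|\scrV_{n,k}(\psi)|\dr\to 0$ on high-probability events, which identifies the limit uniquely as $(\Rest[L][\infty])_\#\nu_{n,k}$.

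\emph{Main obstacle.} The crux is the uniform-in-$(R,N)$ exponential moment bound via Boué--Dupuis. Because the quadratic part of $\scrV_{n,k}$ can have the unfavourable sign, the standard Barashkov--Gubinelli expansion around the Gaussian fluctuation fails; one must instead expand around the drift and rely on the coercivity of $A_{n,k,R}$ from Lemma \ref{prelim:lem-soliton} to close the estimate. The Green's function estimates developed in Section \ref{section:Gaussian} then provide the uniformity required as $R\to\infty$.
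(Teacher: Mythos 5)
Your proposal rests on the same analytic core as the paper --- the Bou\'{e}--Dupuis formula with the objective expanded around the drift $\zeta$, Hardy's inequality and the coercivity from Lemma \ref{prelim:lem-soliton} to absorb the wrong-sign quadratic counterterm, and the Green's function estimates for uniformity in $R$ --- but you organize the convergence arguments differently, and in a softer way than the paper. For part \ref{Gibbs:item-ultraviolet} you use almost sure convergence plus uniform integrability (Vitali), which requires a uniform-in-$N$ exponential moment bound for the \emph{truncated} potential $V^{(N)}_{n,k,R}$; the paper instead never runs Bou\'{e}--Dupuis on $V^{(N)}_{n,k,R}$, but writes $|e^{-V}-e^{-V^{(N)}}|\lesssim |V-V^{(N)}|e^{|V-V^{(N)}|}e^{-V}$, applies H\"{o}lder with the exponent $q$ from \eqref{bounds:eq-exponential-q}, and controls the two factors by Lemma \ref{bounds:lem-exponential} and the quantitative increment bound \eqref{bounds:eq-N}, yielding an explicit rate $N^{-1/2+\epsilon}$. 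Your route is viable --- note, though, that the only $N$-dependent terms in $\scrV^{(N)}_{n,k}$ are the bounded and linear ones (the quadratic counterterm carries $\psi_R$, not $\Proj\psi_R$), so for fixed $R$ the uniformity in $N$ is softer than a full Bou\'{e}--Dupuis argument with the projection threaded through the drift expansion, which would otherwise force you to control $(1-\Proj)\zeta$. For part \ref{Gibbs:item-infrared} you propose tightness plus identification of subsequential limits, whereas the paper constructs $\nu_{n,k}$ explicitly and verifies \eqref{Gibbs:eq-infrared-e1} directly on Lipschitz test functions, so no tightness argument for the Gibbs measures themselves is needed.

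Two points in part \ref{Gibbs:item-infrared} need more care than your sketch suggests. First, $F(\Rest[L][R]\psi_R)\,e^{-V_{n,k,R}(\psi_R)}$ is not a function of $\Rest[L][R]\psi_R$, so Lemma \ref{Gaussian:lem-infinite} does not apply to it directly; one must first replace $V_{n,k,R}$ by $V_{n,k,R^\prime}$ for a fixed $R^\prime$, with an error that is small in $L^1(\scrg_{n,k,R})$ \emph{uniformly in} $R$ --- this is exactly the paper's auxiliary measures $\nu_{n,k,R,R^\prime}$ together with the increment estimate \eqref{bounds:eq-L} and the exponential bound (your ``tail estimate on high-probability events'' is the right idea, but the product $|V_{n,k,R}-V_{n,k,R^\prime}|\,e^{-V}$ must be handled by H\"{o}lder, not just on an event). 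Even then $e^{-V_{n,k,R^\prime}}$ is unbounded, so passing to the limit requires combining the Lipschitz continuity of $V_{n,k,R^\prime}$ with the exponential moments. Second, Lemma \ref{prelim:lem-boue-dupuis} is stated only for finite $R$; applying Bou\'{e}--Dupuis ``directly to $\scrg_{n,k}$'' on $[1,\infty)$ would need separate justification, and the cleaner route is the paper's: uniform-in-$R$ bounds at finite volume followed by a limiting argument.
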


In Section \ref{section:Gaussian}, we previously obtained detailed information on the Gaussian measures $\scrg_{n,k,R}$. In order to prove Proposition \ref{Gibbs:prop-Gibbs}, it therefore primarily remains to control the Radon-Nikodym derivative of the Gibbs measures with respect to the Gaussian measures, which is the subject of Section \ref{section:bounds}. The rest of the proof of Proposition \ref{Gibbs:prop-Gibbs}, which is presented in Subsection \ref{section:construction}, relies on soft arguments. 

\subsection{Control of Radon-Nikodym derivative}\label{section:bounds} 

In the first (and main) lemma of this subsection, we prove an exponential moment estimate for the potential energy with respect to the Gaussian measures. 

\begin{lemma}[Uniform exponential bounds]\label{bounds:lem-exponential}
Let $n\geq 0$, let $k\geq 1$, and let $R_0 \leq L \leq R$. Furthermore, let 
\begin{equation}\label{bounds:eq-exponential-q}
0 \leq q < 1 + \frac{1}{4k (k+1)}. 
\end{equation}
Then, we have that
\begin{equation}
\E \Big[ \exp\big( - q V_{n,k,L} \big)  \Big] \lesssim_{q} 1. 
\end{equation}
\end{lemma}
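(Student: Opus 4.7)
The plan is to lower-bound $V_{n,k,L}$ by a linear-plus-quadratic expression in $\psi$, and then to control the resulting Gaussian exponential moment via the Bou\'e--Dupuis / Barashkov--Gubinelli variational formula, with Hardy's inequality (Lemma \ref{prelim:lem-Hardy}) supplying the sharp threshold $(q-1)k(k+1)<\tfrac{1}{4}$ that appears in \eqref{bounds:eq-exponential-q}.

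First I would use the positivity $\sin^2(Q_{n,k}+r^{-1}\psi)\geq 0$ in \eqref{prelim:eq-scrV} to obtain
\begin{equation*}
\mathscr{V}_{n,k}(\psi)(r) \;\geq\; -\sin^2(Q_{n,k}) - \sin(2Q_{n,k})\,r^{-1}\psi - \cos(2Q_{n,k})\,r^{-2}\psi^2.
\end{equation*}
Integrating over $[1,L]$ and noting that $\int_1^\infty \sin^2(Q_{n,k})\,dr<\infty$ by Lemma \ref{prelim:lem-soliton} gives
$V_{n,k,L}(\psi) \geq -\mathfrak{c}_{n,k} - \mathcal{L}(\psi) - \mathcal{Q}(\psi)$,
where $\mathcal{L}(\psi):=\tfrac{k(k+1)}{2}\int_1^L \sin(2Q_{n,k})\,r^{-1}\psi\,dr$ has a rapidly decaying kernel $\lesssim r^{-(k+2)}$ and $\mathcal{Q}(\psi):=\tfrac{k(k+1)}{2}\int_1^L \cos(2Q_{n,k})\,r^{-2}\psi^2\,dr$. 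Consequently $\exp(-qV_{n,k,L}) \leq e^{q\mathfrak{c}_{n,k}}\exp\bigl(q\mathcal{L}(\psi)+q\mathcal{Q}(\psi)\bigr)$, so the task reduces to bounding $\mathbb{E}_{\scrg_{n,k,R}}[\exp(q\mathcal{L}+q\mathcal{Q})]$ uniformly in $R\geq L\geq R_0$.

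Next I would invoke the variational formula for $\scrg_{n,k,R}$,
\begin{equation*}
-\log \mathbb{E}_{\scrg_{n,k,R}}\!\bigl[\exp(-qV_{n,k,L})\bigr] \;=\; \inf_{\theta} \mathbb{E}\!\left[qV_{n,k,L}\bigl(\psi+I(\theta)\bigr) + \tfrac{1}{2}\|\theta\|^2\right],
\end{equation*}
where the infimum runs over adapted drifts producing a Cameron--Martin displacement $I(\theta)$. Following the strategy anticipated in Remark \ref{Gibbs:rem-expansion}, I would expand the objective around the drift $I(\theta)$ rather than around $\psi$. The payoff is that the problematic quantity $\mathbb{E}[\mathcal{Q}(\psi)] = \tfrac{k(k+1)}{2}\int_1^L \cos(2Q_{n,k})\,G_{n,k,R}(r,r)/r^2\,dr$, which grows like $\log R$ because of the Green's function bound $G_{n,k,R}(r,r)\lesssim r$ from Lemma \ref{Gaussian:lem-Green-estimate}, is replaced by the deterministic quantity $\mathcal{Q}(I(\theta))$ that is absorbable into $\tfrac{1}{2}\|\theta\|^2$. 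The coercivity then reduces to the quadratic-form inequality
\begin{equation*}
(q-1)\,\frac{k(k+1)\cos(2Q_{n,k})}{r^2} \;<\; -\partial_r^2 \qquad \text{on } \dot H^1_0([1,R]),
\end{equation*}
and using $|\cos(2Q_{n,k})|\leq 1$ together with Hardy's inequality $-\partial_r^2 \geq \tfrac{1}{4r^2}$, this holds with a positive $R$-independent gap precisely when $(q-1)k(k+1) < \tfrac{1}{4}$, matching the hypothesis \eqref{bounds:eq-exponential-q}. The residual linear contribution involving $\mathcal{L}$ is handled by Cauchy--Schwarz and the fast decay $|\sin(2Q_{n,k})|\lesssim r^{-(k+1)}$ from Lemma \ref{prelim:lem-soliton}, producing only a bounded additive constant. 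For $0\leq q\leq 1$ the elementary pointwise bound $\exp(-qV)\leq 1+\exp(-q' V)$ valid for any $q'\geq q$ reduces the question to the range $q\in[1,1+\tfrac{1}{4k(k+1)})$ just analyzed.

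The principal obstacle is the uniformity in $R$ and $L$. A naive application of the Gaussian moment formula would produce a Fredholm-type determinant of $A_{n,k,R}^{-1}$ times multiplication by $k(k+1)\cos(2Q_{n,k})/r^2$, whose trace grows like $\log R$ and thus cannot be bounded uniformly. The variational reformulation sidesteps this divergence by converting the troublesome log-determinant into a drift-coercivity requirement, which Hardy's inequality handles dimension-freely; the fact that Hardy's constant is universal in $R$ and matches the $r^{-2}$-scaling of the potential is what ultimately produces the sharp exponent $1+\tfrac{1}{4k(k+1)}$.
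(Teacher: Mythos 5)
There is a genuine gap, and it occurs in your very first step. The pointwise bound $\mathscr{V}_{n,k}(\psi)\geq -\sin^2(Q_{n,k})-\sin(2Q_{n,k})r^{-1}\psi-\cos(2Q_{n,k})r^{-2}\psi^2$ is correct, but it discards exactly the cancellation that makes the lemma true: the counterterm $-\cos(2Q_{n,k})(r^{-1}\psi)^2$ in \eqref{prelim:eq-scrV} is there to cancel the quadratic part of $\sin^2(Q_{n,k}+r^{-1}\psi)$, so that $\mathscr{V}_{n,k}(\psi)=O(|r^{-1}\psi|^3)$, which is integrable against the growth $|\psi|\lesssim r^{1/2+\epsilon}$. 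After your reduction you must bound $\E_{\scrg_{n,k,R}}[\exp(q\mathcal{Q}(\psi)+q\mathcal{L}(\psi))]$, and this quantity is \emph{not} uniformly bounded in $L$: by Jensen's inequality it is at least $\exp(q\,\E[\mathcal{Q}(\psi)])$ up to the linear correction, and by Lemma \ref{Gaussian:lem-Mercer} together with the lower bound $G_{n,k,R}(r,r)\gtrsim(1-r/R)(r-1)$ of Lemma \ref{Gaussian:lem-lower-bound} one has $\E[\mathcal{Q}(\psi)]=\tfrac{k(k+1)}{2}\int_1^L\cos(2Q_{n,k})G_{n,k,R}(r,r)r^{-2}\,\mathrm{d}r\gtrsim\log L$. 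So the reduced quantity grows at least like a positive power of $L$, and no subsequent argument can recover a uniform bound.

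The claim you use to justify this — that the variational formula replaces the divergent $\E[\mathcal{Q}(\psi)]$ by the deterministic $\mathcal{Q}(I(\theta))$ — is a misreading of Bou\'e--Dupuis. The formula evaluates the potential at the \emph{shifted} field $\psi+I(\theta)$, so $\mathcal{Q}(\psi+I(\theta))=\mathcal{Q}(\psi)+2B(\psi,I(\theta))+\mathcal{Q}(I(\theta))$ and the pure Gaussian quadratic $\mathcal{Q}(\psi)$, with its divergent expectation, survives untouched; the drift cannot remove it. The paper's proof avoids this by reversing the order of operations: it applies Lemma \ref{prelim:lem-boue-dupuis} to the full renormalized potential $qV_{n,k,L}$ \emph{first}, and only then Taylor-expands $\mathscr{V}_{n,k}(\psi+\zeta)$ around the drift $\zeta$. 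In that expansion the quadratic-in-$\psi$ contribution appears as $(\cos(2r^{-1}\zeta)-\cos(2Q_{n,k}))(r^{-1}\psi)^2$, which carries a decaying prefactor and is absorbed via Young's inequality into $\eta r^{-2}\zeta^2$ plus integrable errors in $\psi$; the only dangerous quadratic is then in $\zeta$, and it is exactly this term that Hardy's inequality absorbs into $\tfrac12\int(\partial_r\zeta)^2$ under the condition $4k(k+1)(q-1)<1$. Your identification of the Hardy-inequality threshold and your treatment of the linear term and of the range $0\leq q\leq 1$ are all consistent with the paper, but they are applied to the wrong object; the proof cannot be repaired without restoring the $\sin^2(Q_{n,k}+r^{-1}(\psi+\zeta))$ term inside the variational problem.
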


\begin{proof} In the following, we simplify the notation by denoting samples by $\psi$ rather than $\psi_R$. 
It suffices to treat the case 
\begin{equation*}
   1 \leq  q < 1 + \frac{1}{4k (k+1)},
\end{equation*}
since the range $0\leq q <1$ can then be obtained using Hölder's inequality. By using a consequence of the Bou\'{e}-Dupuis formula (Lemma \ref{prelim:lem-boue-dupuis}), it follows that 
\begin{equation}
\begin{aligned}\label{bounds:eq-exponential-p1} 
&- \log \bigg( \E_{\scrg_{n,k,R}} \Big[ \exp\big( - q V_{n,k,L} \big)  \Big] \bigg) \\
\geq& \, \E_{\scrg_{n,k,R}} 
\bigg[ \inf_{\zeta \in \dot{H}_0^1([1,R])} 
\bigg\{ 
q V_{n,k,L}\big( \psi + \zeta \big) 
+ \frac{1}{2} \int_1^R \dr \Big( (\partial_r \zeta)^2 + \frac{k(k+1) \cos\big( 2 Q_{n,k}\big)}{r^2} \zeta^2 \Big)
\bigg\} \bigg]. 
\end{aligned}
\end{equation}
Thus, it suffices to obtain a lower bound on the variational problem in \eqref{bounds:eq-exponential-p1}. In the argument below, the reader should keep the following guiding principle in mind: While Proposition \ref{Gaussian:prop-Gaussian} controls arbitrary moments of the Gaussian process $\psi$, the good term in \eqref{bounds:eq-exponential-p1} only controls the second moment of $\zeta$. As a result, all Taylor expansions should be performed around $\zeta$. \\

We recall that the integral density of  $V_{n,k}(\psi+\zeta)$ is given by a scalar multiple of  
\begin{equation}\label{bounds:eq-exponential-p2}
\begin{aligned}
\scrV_{n,k}(\psi+\zeta) 
&= \sin^2\big(Q_{n,k}+r^{-1} \psi + r^{-1} \zeta \big) - \sin^2\big( Q_{n,k} \big) \\
&- \sin\big( 2 Q_{n,k} \big) r^{-1} (\psi+\zeta)  - \cos\big( 2 Q_{n,k} \big) r^{-2} (\psi+\zeta)^2. 
\end{aligned}
\end{equation}
We now simplify the expressions in \eqref{bounds:eq-exponential-p2}. Using Lemma \ref{prelim:lem-soliton}, the first, second, and third summand in \eqref{bounds:eq-exponential-p2} can be approximated or estimated by 
\begin{align}
\Big| \sin^2\big(Q_{n,k}+r^{-1} \psi + r^{-1} \zeta \big) - 
\sin^2\big(r^{-1} \psi + r^{-1} \zeta \big) \Big| \lesssim \big| Q_{n,k} - n\pi \big| &\lesssim r^{-2}, \label{bounds:eq-exponential-p3} \\
\big| \sin^2\big( Q_{n,k} \big)\big| \lesssim |Q_{n,k}-n\pi| &\lesssim r^{-2}, \label{bounds:eq-exponential-p4} \\ 
\big| \sin\big( 2 Q_{n,k} \big) r^{-1} (\psi+\zeta)  \big| \lesssim  |Q_{n,k}-n\pi| r^{-1} \big( \big| \psi \big| + \big| \zeta \big| \big) &\lesssim r^{-3}  \big( \big| \psi \big| + \big| \zeta \big| \big). \label{bounds:eq-exponential-p5} 
\end{align}
We now combine the $\sin^2(r^{-1}(\psi+\zeta))$-term from \eqref{bounds:eq-exponential-p3} with part of the last summand from \eqref{bounds:eq-exponential-p2}. Using Taylor's theorem, we have that 
\begin{equation}\label{bounds:eq-exponential-p6}
\begin{aligned}
&\Big| \sin^2\big(r^{-1} \psi + r^{-1} \zeta \big) - 2\cos\big( 2 Q_{n,k} \big) r^{-1} \zeta r^{-1} \psi - \cos\big( 2 Q_{n,k} \big) (r^{-1} \psi)^2 \Big|  \\
\lesssim& \, \Big|  \sin^2\big(r^{-1} \psi + r^{-1} \zeta \big) - \sin \big( 2 r^{-1} \zeta \big) r^{-1} \psi - \cos\big( 2 r^{-1} \zeta \big) (r^{-1} \psi)^2 \Big|  \\
+& \, \Big| \sin \big( 2 r^{-1} \zeta \big) r^{-1} \psi - 2 \cos\big( 2 Q_{n,k} \big)  r^{-1} \zeta r^{-1}\psi \Big| \\
+& \, \Big|  \Big( \cos\big( 2 r^{-1} \zeta \big) - \cos\big( 2 Q_{n,k} \big) \Big)  (r^{-1} \psi)^2 \Big| \\
\lesssim& \, \big( r^{-1} \big| \psi \big| \big)^3 + \big| 1 - \cos\big( 2 Q_{n,k}\big) \big| \, r^{-1} |\psi| \,  r^{-1} | \zeta| \\ 
+& \, \Big| \Big( \sin \big( 2 r^{-1} \zeta \big) - 2 r^{-1} \zeta \Big) r^{-1} \psi \Big| + \Big( r^{-1} \big| \zeta \big| + \big| Q_{n,k} - n\pi \big| \Big) \big( r^{-1} |\psi|\big)^2. 
\end{aligned}
\end{equation}
Using Lemma \ref{prelim:lem-soliton}, the elementary estimate $|\sin(x)-x|\lesssim \min(|x|,|x|^3) \lesssim |x|^{3/2}$, and Young's inequality, we obtain for all $\eta\in (0,1)$ that
\begin{equation}
\begin{aligned}\label{bounds:eq-exponential-p7} 
\eqref{bounds:eq-exponential-p6} &\lesssim  \big( r^{-1} \big| \psi \big| \big)^3 + r^{-4} |\psi| |\zeta| + \big( r^{-1} |\zeta| \big)^{3/2} r^{-1} |\psi| + \big( r^{-1} |\zeta| + r^{-2} \big) \big( r^{-1} \psi\big)^2 \\
&\lesssim \eta r^{-2} \zeta^2 + \eta^{-1} r^{-4} \psi^2 + r^{-3} |\psi|^3 + \eta^{-3} r^{-4} \psi^4. 
\end{aligned}
\end{equation}
By combining \eqref{bounds:eq-exponential-p3}-\eqref{bounds:eq-exponential-p7} and using Young's inequality, it follows that 
\begin{equation}\label{bounds:eq-exponential-p8}
\begin{aligned}
&\Big| \scrV_{n,k,R}(\psi+\zeta) - \Big( \sin^2\big( r^{-1} \zeta \big) - \cos\big( 2 Q_{n,k} \big) (r^{-1} \zeta)^2 \Big) \Big| \\
\lesssim& \, r^{-3} |\zeta| + \eta r^{-2} \zeta^2 + \eta^{-3} \Big( r^{-2} + r^{-3} | \psi | + r^{-4} \psi^2 + r^{-3} |\psi|^3 + r^{-4} |\psi|^4 \Big) \\
\lesssim& \, \eta r^{-2} \zeta^2 + \eta^{-3} \Big( r^{-2} + r^{-3} |\psi|^3 + r^{-4} \psi^4 \Big).  
\end{aligned}
\end{equation}
We now let $C_{n,k}\geq 1$ be sufficiently large. 
By inserting \eqref{bounds:eq-exponential-p8} into the objective function in \eqref{bounds:eq-exponential-p1}, it then follows that 
\begin{align}
 &\E_{\scrg_{n,k,R}} \bigg[ \inf_{\zeta \in \dot{H}_0^1([1,R])} 
\bigg\{ 
q V_{n,k,L}\big( \psi + \zeta \big) 
+ \frac{1}{2} \int_1^R \dr \Big( (\partial_r \zeta)^2 + \frac{k(k+1) \cos\big( 2 Q_{n,k}\big)}{r^2} \zeta^2 \Big)
\bigg\} \bigg]  \notag \\ 
 \geq& \,\inf_{\zeta \in \dot{H}_0^1([1,R])} 
\Bigg\{ 
 \frac{1}{2} \int_1^R \dr \, (\partial_r \zeta)^2 + 
  \frac{ q k(k+1)}{2} \int_1^L  \dr  \sin\big( r^{-1} \zeta \big)^2 
  - C_{n,k} \eta  \int_1^R \dr \, r^{-2} \zeta^2 
  \label{bounds:eq-exponential-p9} \\
  &+ \, \frac{k(k+1)}{2}  \int_1^R \dr \,   \frac{\cos\big( 2 Q_{n,k}\big)}{r^2} \zeta^2 - q \frac{k(k+1)}{2}  \int_1^L \dr \,   \frac{\cos\big( 2 Q_{n,k}\big)}{r^2} \zeta^2 \Bigg\}
  \notag  \\
  -&\,  C_{n,k}\eta^{-3} \, \E_{\scrg_{n,k,R}} \bigg[  \int_1^R \dr \Big( r^{-2} + r^{-3} |\psi|^3 + r^{-4} \psi^4 \Big) \bigg] \bigg\}. \label{bounds:eq-exponential-p11}
 \end{align}
 We now treat  \eqref{bounds:eq-exponential-p9} and  \eqref{bounds:eq-exponential-p11} separately. In order to estimate \eqref{bounds:eq-exponential-p9}, we first note that $\sin^2(r^{-1} \zeta)$ is nonnegative, which yields 
 \begin{equation*}
     \frac{ q k(k+1)}{2} \int_1^L  \dr  \sin\big( r^{-1} \zeta \big)^2 \geq 0. 
 \end{equation*}
 Furthermore, since $L\geq R_0$ and $R_0$ is sufficiently large, Lemma \ref{prelim:lem-soliton} implies that $\cos(2Q_{n,k})$ is nonnegative on $[L,R]$. Together with Hardy's inequality (Lemma \ref{prelim:lem-Hardy}), it follows that 
 \begin{align*}
      &\frac{k(k+1)}{2}  \int_1^R \dr \,   \frac{\cos\big( 2 Q_{n,k}\big)}{r^2} \zeta^2 - q \frac{k(k+1)}{2}  \int_1^L \dr \,   \frac{\cos\big( 2 Q_{n,k}\big)}{r^2} \zeta^2 \\
      \geq& \, - (q-1) \frac{k(k+1)}{2} \int_1^R \dr \, \frac{\cos\big( 2 Q_{n,k}\big)}{r^2} \zeta^2  
      \geq\,  - 4 (q-1) \frac{k(k+1)}{2} \int_1^R \dr \, r^{-2} \zeta^2. 
 \end{align*}
 In total, it follows that 
 \begin{equation*}
\eqref{bounds:eq-exponential-p9} \geq \frac{1}{2} \Big( 1- 4 k (k+1) (q-1) - 8 C_{n,k}  \eta \Big) \int_1^R \dr \, (\partial_r \zeta)^2. 
 \end{equation*}
 Due to our assumption on $q$, we can choose $\eta=\eta_{n,k,q}>0$ sufficiently small such that 
 \begin{equation*}
    1- 4 k (k+1) (q-1) - 8 C_{n,k}  \eta >0. 
 \end{equation*}
 Thus, the contribution \eqref{bounds:eq-exponential-p9} is bounded below by zero. In order to complete the proof, it therefore only remains to estimate \eqref{bounds:eq-exponential-p11}. Using Proposition \ref{Gaussian:prop-Gaussian} and our choice of $\eta>0$, it follows for all $\epsilon>0$ that 
 \begin{equation*}
    \eta^{-3} \, \E_{\scrg_{n,k,R}} \bigg[  \int_1^R \dr \Big( r^{-2} + r^{-3} |\psi|^3 + r^{-4} \psi^4 \Big) \bigg] 
    \lesssim_{n,k,q,\epsilon} \int_1^R \dr \Big( r^{-2} + r^{-3/2+\epsilon} + r^{-2+\epsilon} \Big) \lesssim_\epsilon 1,
 \end{equation*}
 which yields the desired lower bound on \eqref{bounds:eq-exponential-p11}.
\end{proof}

\begin{remark}\label{Gibbs:rem-expansion} 
As already discussed in the proof of Lemma \ref{bounds:lem-exponential}, we use a Taylor expansion of the potential energy around the drift term $\zeta$ rather than the Gaussian term $\psi$. This is in sharp contrast to \cite{BG18}, in which the potential energy is expanded around the Gaussian term. 
\end{remark}

While Lemma \ref{bounds:lem-exponential} yields uniform exponential bounds, it does not yield estimates for increments in the interval size $L$ or the frequency-truncation parameter $N$, which are the subject of the next lemma. 

\begin{lemma}[Increments in $L$ and $N$]\label{bounds:lem-increment}
Let $n\geq 0$, let $k\geq 1$, let $R\geq R_0$, let $2\leq L \leq R$, and let $N\geq 1$. We also let $\epsilon>0$ and $p\geq 2$. Then, it holds that 
\begin{align}
\big\| V_{n,k,L} - V_{n,k,L/2 }\big\|_{L^p(\scrg_{n,k,R})} &\lesssim_{\epsilon} p^{3/2} L^{-1/2+\epsilon}, \label{bounds:eq-L} \\ 
\Big\| \big|  V_{n,k,R} -V_{n,k,R}^{(N)}\big| 
\exp\Big(  \big|  V_{n,k,R} -V_{n,k,R}^{(N)}\big|  \Big) \Big\|_{L^{p}(\scrg_{n,k,R})}
&\lesssim_{\epsilon} N^{-1/2+\epsilon}
\exp\big( C_{n,k,\epsilon} R^3 p ),\label{bounds:eq-N}
\end{align}
where $C_{n,k,\epsilon}\geq 1$ is sufficiently large. 
\end{lemma}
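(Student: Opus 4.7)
Both bounds reduce to pointwise control of the integrands of $V_{n,k,L}$ (respectively $V_{n,k,R}-V^{(N)}_{n,k,R}$), combined with the Gaussian moment and variance bounds of Proposition~\ref{Gaussian:prop-Gaussian} and Lemma~\ref{Gaussian:lem-Green-estimate}. I would treat the two parts independently.

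\emph{Step 1 (increments in $L$).} Using $V_{n,k,L}-V_{n,k,L/2}=\tfrac{k(k+1)}{2}\int_{L/2}^L\dr\,\scrV_{n,k}(\psi)$, I first establish the pointwise estimate
\begin{equation*}
|\scrV_{n,k}(\psi(r))|\lesssim |r^{-1}\psi(r)|^3 \qquad \text{whenever } |r^{-1}\psi(r)|\leq 1,
\end{equation*}
which follows from a third-order Taylor expansion of $\sin^2$ at $Q_{n,k}$ and the definition of $\scrV_{n,k}$. By Mercer's theorem (Lemma~\ref{Gaussian:lem-Mercer}) together with $G_{n,k,R}(r,r)\lesssim r$ from Lemma~\ref{Gaussian:lem-Green-estimate}, $\psi(r)$ is a centered Gaussian of variance $\lesssim r$, so Gaussian hypercontractivity gives $\|\psi(r)\|_{L^{3p}(\scrg_{n,k,R})}\lesssim \sqrt{p}\,r^{1/2}$. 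Splitting the expectation at $\{|r^{-1}\psi(r)|\leq 1\}$ (where the cubic bound applies) versus its complement (a Gaussian-tail event of probability $\lesssim e^{-cr}$ whose contribution is super-polynomially small via Cauchy--Schwarz) yields
\begin{equation*}
\|\scrV_{n,k}(\psi(r))\|_{L^p(\scrg_{n,k,R})}\lesssim p^{3/2}r^{-3/2} + (\text{negligible}).
\end{equation*}
Minkowski's inequality in $r$ followed by integration over $[L/2,L]$ then produces the advertised $p^{3/2}L^{-1/2+\epsilon}$.

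\emph{Step 2 (increments in $N$).} Writing $\eta:=\psi-\Proj\psi$, the definitions of $\scrV_{n,k}$ and $\scrV^{(N)}_{n,k}$ give
\begin{equation*}
V_{n,k,R}(\psi)-V^{(N)}_{n,k,R}(\psi)=\tfrac{k(k+1)}{2}\int_1^R\dr\,\Big\{\sin^2(Q_{n,k}+r^{-1}\psi)-\sin^2(Q_{n,k}+r^{-1}\Proj\psi)-\sin(2Q_{n,k})r^{-1}\eta\Big\},
\end{equation*}
and the integrand is pointwise $\lesssim r^{-1}|\eta(r)|$ since $\sin^2$ is $1$-Lipschitz. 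Cauchy--Schwarz in $r$, the elementary embedding $\|\psi\|_{C^{0,\alpha}([1,R])}\lesssim R^{1/2+\delta}\|\psi\|_{\Czero([1,R])}$, and Lemma~\ref{prelim:lem-projection}(iii) then produce the \emph{linear} pointwise bound
\begin{equation*}
|V_{n,k,R}(\psi)-V^{(N)}_{n,k,R}(\psi)|\lesssim R^{3/2}\,N^{-1/2+\delta}\,\|\psi\|_{\Czero([1,R])}.
\end{equation*}
By Proposition~\ref{Gaussian:prop-Gaussian}, $\|\psi\|_{\Czero([1,R])}$ is sub-Gaussian under $\scrg_{n,k,R}$, so $\E_{\scrg_{n,k,R}}[e^{\lambda\|\psi\|_{\Czero([1,R])}}]\lesssim e^{C\lambda+C\lambda^2}$. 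Combining this with $|x|e^{|x|}\leq e^{2|x|}$ and a Cauchy--Schwarz split yields
\begin{equation*}
\big\||V-V^{(N)}|e^{|V-V^{(N)}|}\big\|_{L^p}\leq \|V-V^{(N)}\|_{L^{2p}}\cdot\|e^{|V-V^{(N)}|}\|_{L^{2p}}\lesssim R^{3/2}N^{-1/2+\delta}\sqrt{p}\cdot e^{CpR^3},
\end{equation*}
which collapses to $N^{-1/2+\epsilon}e^{C_{n,k,\epsilon}R^3p}$ after absorbing polynomial prefactors into the exponential.

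\textbf{Main obstacle.} The delicate point is the choice of pointwise bound in Step 2. The natural mean-value expansion $\sin^2(Q_{n,k}+r^{-1}\psi)-\sin^2(Q_{n,k}+r^{-1}\Proj\psi)=\sin(2Q_{n,k}+2\xi)\cdot r^{-1}\eta$, after subtracting $\sin(2Q_{n,k})r^{-1}\eta$, would leave a bound quadratic in $\psi$, of the form $R^a\|\psi\|_{\Czero([1,R])}^2$. This would be fatal: $\E e^{\lambda\|\psi\|_{\Czero([1,R])}^2}$ is finite only for small $\lambda$ by Fernique's theorem, and the prefactor $2p$ appearing in the Cauchy--Schwarz step would exceed the admissible range. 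The remedy is to forgo the Taylor cancellation and use only the $1$-Lipschitz property of $\sin^2$, producing a linear-in-$\psi$ bound at the cost of an extra power of $R$; this $R$-loss is harmless because the exponential $e^{C_{n,k,\epsilon}R^3p}$ on the right-hand side already absorbs polynomial dependence on $R$.
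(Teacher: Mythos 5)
Your argument coincides with the paper's proof in both steps: for \eqref{bounds:eq-L}, the cubic bound on the Taylor remainder $\scrV_{n,k}$ combined with the Gaussian moment bounds of Proposition \ref{Gaussian:prop-Gaussian}; and for \eqref{bounds:eq-N}, the \emph{linear}-in-$\psi$ pointwise bound $|V_{n,k,R}-V^{(N)}_{n,k,R}|\lesssim_\epsilon R^{3/2}N^{-1/2+\epsilon}\|\psi\|_{C^{0,1/2-\epsilon,-1/2-\epsilon}([1,R])}$ obtained from the Lipschitz property of $\sin^2$, Cauchy--Schwarz, and Lemma \ref{prelim:lem-projection}, followed by sub-Gaussian exponential moment estimates -- and your ``main obstacle'' discussion correctly identifies the reason the paper forgoes the mean-value cancellation. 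The only (harmless) deviation is the truncation at $\{|r^{-1}\psi|\leq 1\}$ in Step 1, which is unnecessary: the third derivative of $x\mapsto\sin^2(Q_{n,k}+x)$ is bounded, so the cubic remainder bound holds globally and the paper applies it without splitting.
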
 

\begin{proof}
 We first prove the estimate for the increment in $L$, i.e., \eqref{bounds:eq-L}. Using Taylor's theorem, the density $\mathscr{V}_{n,k,R}$ from \eqref{prelim:eq-scrV} satisfies
\begin{align*}
\big| \scrV_{n,k}(\psi) \big| 
&\lesssim \Big| 
\sin^2\big(Q_{n,k}+r^{-1} \psi\big) - \sin^2\big( Q_{n,k} \big) - \sin\big( 2 Q_{n,k} \big) r^{-1} \psi  - \cos\big( 2 Q_{n,k} \big) (r^{-1} \psi)^2 \Big| \\
&\lesssim \big| r^{-1} \psi \big|^3. 
\end{align*}
Using the definition of $V_{n,k,L}$ from \eqref{prelim:eq-V}, Hölder's inequality, and Proposition \ref{Gaussian:prop-Gaussian}, this implies
\begin{align*}
&\big\| V_{n,k,L} - V_{n,k,L/2 }\big\|_{L^p(\scrg_{n,k,R})} 
\lesssim \int_{L/2}^L \dr \, \big\| (r^{-1} \psi )^3 \big\|_{L^p(\scrg_{n,k,R})} 
\lesssim \int_{L/2}^L \dr \, \big\| r^{-1} \psi  \big\|_{L^{3p}(\scrg_{n,k,R})}^3 \\
\lesssim& \,  p^{3/2} \int_{L/2}^L \dr \, r^{-3/2+\epsilon} \lesssim L^{-1/2+\epsilon} p^{3/2}. 
\end{align*}
This completes the proof of \eqref{bounds:eq-L} and it remains to prove \eqref{bounds:eq-N}. To this end, we first prove for all $\psi_R \in C^{0,1/2-\epsilon,-1/2-\epsilon}_0([1,R])$ that
\begin{equation}\label{bounds:eq-increment-p1}
    \big|  (V^{(N)}_{n,k,R} - V_{n,k,R})(\psi_R) \big|
    \lesssim_\epsilon R^{3/2} N^{-1/2+\epsilon} \big\| \psi_R \big\|_{C^{0,1/2-\epsilon,-1/2-\epsilon}([1,R])}. 
\end{equation}
Indeed, it follows from Hölder's inequality and Lemma \ref{prelim:lem-projection} that
\begin{align*}
\big|  (V^{(N)}_{n,k,R} - V_{n,k,R})(\psi_R) \big|
&\lesssim \int_1^R \dr \, r^{-1} \big| \Proj \psi_R - \psi_R \big| \\
&\lesssim \big\| \Proj \psi_R - \psi_R\big\|_{L^2([1,R])}\\
&\lesssim_\epsilon R^{1-\epsilon} N^{-1/2+\epsilon} \big\| \psi_R \big\|_{C^{0,1/2-\epsilon,0}([1,R])} \\
&\lesssim_\epsilon R^{3/2} N^{-1/2+\epsilon} \big\| \psi_R \big\|_{C^{0,1/2-\epsilon,-1/2-\epsilon}([1,R])}. 
\end{align*}
Using \eqref{bounds:eq-increment-p1} and that $C_{n,k,\epsilon}\geq 1$ is sufficiently large, it follows that
\begin{align*}
 &\Big\| \big|  V_{n,k,R} -V_{n,k,R}^{(N)}\big| 
\exp\Big(  \big|  V_{n,k,R} -V_{n,k,R}^{(N)}\big|  \Big) \Big\|_{L^{p}(\scrg_{n,k,R})} \\
\lesssim_\epsilon & \, N^{-1/2+\epsilon} 
\Big\| R^{3/2} \big\| \psi_R \big\|_{C^{0,1/2-\epsilon,-1/2-\epsilon}([1,R])}
 \exp\Big( \tfrac{C_{n,k,\epsilon}}{20} N^{-1/2+\epsilon} R^{3/2} \big\| \psi_R \big\|_{C^{0,1/2-\epsilon,-1/2-\epsilon}([1,R])}  \Big) \Big\|_{L^{p}(\scrg_{n,k,R})} \\
 \lesssim_\epsilon& \,  N^{-1/2+\epsilon}  
 \Big\| \exp\Big( \tfrac{C_{n,k,\epsilon}}{10} R^{3/2} \big\| \psi_R \big\|_{C^{0,1/2-\epsilon,-1/2-\epsilon}([1,R])}  \Big) \Big\|_{L^{p}(\scrg_{n,k,R})}
\end{align*}
Thus, the desired estimate follows from Proposition \ref{Gaussian:prop-Gaussian} (and exponential moment estimates for sub-Gaussian random variables). 
\end{proof}

At the end of this subsection, we record the following corollary of Lemma \ref{bounds:lem-exponential} and Lemma \ref{bounds:lem-increment}, which is used to control the normalization constants.

\begin{corollary}\label{Gibbs:cor-normalization}
Let $n\geq 0$ and let $k\geq 1$. Then, it holds that 
\begin{equation}\label{Gibbs:eq-normalization}
\E_{\scrg_{n,k,R}} \Big[ \exp\big( - V_{n,k,L} \big) \Big] \sim 1 
\end{equation}
uniformly for all $R_0 \leq L \leq R$. 
\end{corollary}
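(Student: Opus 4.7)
The upper bound follows immediately from Lemma \ref{bounds:lem-exponential}: since $1 < 1 + 1/(4k(k+1))$, we may apply the lemma with $q=1$ to obtain $\E_{\scrg_{n,k,R}}[\exp(-V_{n,k,L})] \lesssim 1$, uniformly in $R_0 \leq L \leq R$.

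For the lower bound, the plan is to use Jensen's inequality, which yields
\begin{equation*}
\E_{\scrg_{n,k,R}}\bigl[\exp(-V_{n,k,L})\bigr] \geq \exp\bigl(-\E_{\scrg_{n,k,R}}[V_{n,k,L}]\bigr),
\end{equation*}
and thus reduces the task to an \emph{upper} bound on $\E_{\scrg_{n,k,R}}[V_{n,k,L}]$ that is uniform in both $L$ and $R$. To establish this, I will fix a dyadic sequence $L_j = 2^j R_0$ and, given $L \in [R_0,R]$, choose $J$ so that $L_J \leq L < L_{J+1}$. Writing the telescoping identity
\begin{equation*}
V_{n,k,L} = V_{n,k,R_0} + \sum_{j=0}^{J-1} \bigl( V_{n,k,L_{j+1}} - V_{n,k,L_j} \bigr) + \bigl( V_{n,k,L} - V_{n,k,L_J}\bigr),
\end{equation*}
the triangle inequality and the increment bound \eqref{bounds:eq-L} from Lemma \ref{bounds:lem-increment} (taken with $p=1$ and a small $\epsilon>0$) give
\begin{equation*}
\bigl| \E_{\scrg_{n,k,R}}[V_{n,k,L}] \bigr| \leq \|V_{n,k,R_0}\|_{L^1(\scrg_{n,k,R})} + C_\epsilon \sum_{j=0}^{J} L_j^{-1/2+\epsilon}.
\end{equation*}
The dyadic sum is finite since $-1/2+\epsilon<0$.

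It remains to bound $\|V_{n,k,R_0}\|_{L^1(\scrg_{n,k,R})}$ uniformly in $R$. Using the pointwise estimate $|\mathscr{V}_{n,k}(\psi)| \lesssim |r^{-1}\psi|^3$ recalled in the proof of Lemma \ref{bounds:lem-increment}, Minkowski's inequality, and the growth estimate from Proposition \ref{Gaussian:prop-Gaussian}, we have
\begin{equation*}
\|V_{n,k,R_0}\|_{L^1(\scrg_{n,k,R})} \lesssim \int_1^{R_0} \dr\, r^{-3} \bigl\|\psi(r)\bigr\|_{L^3(\scrg_{n,k,R})}^3 \lesssim_\epsilon \int_1^{R_0} \dr\, r^{-3/2+3\epsilon} \lesssim 1,
\end{equation*}
where the bound is independent of $R$ since $R_0$ is a fixed constant. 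Combining the two displays yields $|\E_{\scrg_{n,k,R}}[V_{n,k,L}]| \lesssim 1$ uniformly in $L,R$, and Jensen then produces the desired lower bound. The main (very mild) obstacle is ensuring that the dyadic summability genuinely delivers a constant independent of $R$; this is precisely why Lemma \ref{bounds:lem-increment} was phrased as a quantitative increment bound in $L$ rather than merely a pointwise-in-$L$ moment bound.
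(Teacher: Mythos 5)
Your proposal is correct and follows essentially the same route as the paper, which likewise obtains the upper bound from Lemma \ref{bounds:lem-exponential} and the lower bound from Jensen's inequality combined with Lemma \ref{bounds:lem-increment}; you have simply spelled out the dyadic telescoping and the base-case bound that the paper leaves implicit. The only cosmetic remark is that Lemma \ref{bounds:lem-increment} is stated for $p\geq 2$, so you should invoke it with $p=2$ and use $\|\cdot\|_{L^1}\leq\|\cdot\|_{L^2}$ rather than ``$p=1$''.
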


\begin{proof}
The upper bound in \eqref{Gibbs:eq-normalization} follows directly from Lemma \ref{bounds:lem-exponential}. Using Jensen's inequality and Lemma \ref{bounds:lem-increment}, we also have that 
\begin{equation*}
\E_{\scrg_{n,k,R}} \Big[ \exp\big( - V_{n,k,L} \big) \Big] \geq \exp \Big( - \E_{\scrg_{n,k,R}} \big[ V_{n,k,L} \big] \Big) \gtrsim 1, 
\end{equation*}
which yields the lower bound in \eqref{Gibbs:eq-normalization}. 
\end{proof}

\subsection{Proof of Proposition \ref{Gibbs:prop-Gibbs}}\label{section:construction}

Equipped with the estimates from Subsection \ref{section:bounds}, we now present the proof of Proposition \ref{Gibbs:prop-Gibbs}. 

\begin{proof}[Proof of Proposition \ref{Gibbs:prop-Gibbs}:]
We first construct the Gibbs measures for finite intervals, i.e., we first  prove \ref{Gibbs:item-ultraviolet}. Due to Corollary \ref{Gibbs:cor-normalization}, it suffices to prove that 
\begin{equation}\label{Gibbs:eq-Gibbs-p1}
\lim_{N\rightarrow \infty} \Big\| \exp\big( - V_{n,k,R} \big) - \exp\big( -V_{n,k,R}^{(N)}\big) \Big\|_{L^1(\scrg_{n,k,R})} =0. 
\end{equation}
To this end, we let $q=q_k \in (1,\infty)$ satisfy \eqref{bounds:eq-exponential-q} and let $q^\prime$ be its Hölder-conjugate. Using the elementary estimate 
\begin{equation*}
|\exp(-x)-\exp(-y)| \lesssim |x-y| \exp\big( |x-y|\big) \exp\big( -x\big) \qquad \forall x,y \in \R
\end{equation*}
and Hölder's inequality, it follows that
\begin{align*}
&\Big\| \exp\big( - V_{n,k,R} \big) - \exp\big( -V_{n,k,R}^{(N)}\big) \Big\|_{L^1(\scrg_{n,k,R})} \\ 
\lesssim \, &\Big\| \big|  V_{n,k,R} -V_{n,k,R}^{(N)}\big| 
\exp\Big(  \big|  V_{n,k,R} -V_{n,k,R}^{(N)}\big|  \Big) 
\exp\Big( - V_{n,k,R} \Big)  \Big\|_{L^1(\scrg_{n,k,R})} \\
\lesssim\, & \Big\| \big|  V_{n,k,R} -V_{n,k,R}^{(N)}\big| 
\exp\Big(  \big|  V_{n,k,R} -V_{n,k,R}^{(N)}\big|  \Big) \Big\|_{L^{q^\prime}(\scrg_{n,k,R})}
\, \Big\|\exp\Big( - V_{n,k,R} \Big) \Big\|_{L^q(\scrg_{n,k,R})}
\end{align*}
By using Lemma \ref{bounds:lem-exponential} and Lemma \ref{bounds:lem-increment}, it follows for all $\epsilon>0$ that
\begin{align*}
\Big\| \big|  V_{n,k,R} -V_{n,k,R}^{(N)}\big| 
\exp\Big(  \big|  V_{n,k,R} -V_{n,k,R}^{(N)}\big|  \Big) \Big\|_{L^{q^\prime}(\scrg_{n,k,R})}
\, \Big\|\exp\Big( - V_{n,k,R} \Big) \Big\|_{L^q(\scrg_{n,k,R})}
\lesssim_{n,k,R,\epsilon} \, & N^{-1/2+\epsilon}. 
\end{align*}
This completes the proof of \eqref{Gibbs:eq-Gibbs-p1}. \\ 

We now construct the Gibbs measure on the semi-infinite interval, i.e., we now prove \ref{Gibbs:item-infrared}.  Using Lemma \ref{bounds:lem-exponential}, we can define
\begin{equation}\label{Gibbs:eq-infrared-p1}
\mathrm{d}\nu_{n,k}(\psi):= \mathcal{Z}_{n,k}^{-1} \exp\big( - V_{n,k} (\psi) \big) \mathrm{d}\scrg_{n,k}(\psi). 
\end{equation}
In order to prove \eqref{Gibbs:eq-infrared-e1}, we introduce auxiliary probability  measures. To be more precise, we let $R_0 \leq R^\prime \leq R$ and define a probability measure $\nu_{n,k,R,R^\prime}$ on $\Czero_0([1,R])$ 
by  
\begin{equation}\label{Gibbs:eq-infrared-p2} 
    \mathrm{d}\nu_{n,k,R,R^\prime}(\psi_R):= \mathcal{Z}_{n,k,R,R^\prime}^{-1}  
     \exp\big( - V_{n,k,R^\prime} (\psi_R) \big) \mathrm{d}\scrg_{n,k,R}(\psi_R). 
\end{equation}
We note that the difference between $\nu_{n,k,R,R^\prime}$ and $\nu_{n,k,R}$ is that the potential energy is only integrated over $[1,R^\prime]$ rather than $[1,R]$. We now claim for all $1\leq R^\prime \leq R \leq \infty$ and all $\epsilon>0$  that
\begin{equation}\label{Gibbs:eq-infrared-p3}
\big\| \nu_{n,k,R,R^\prime} - \nu_{n,k,R} \big\|_{\textup{TV}} \lesssim_\epsilon (R^\prime)^{-1/2+\epsilon}. 
\end{equation}
In order to prove \eqref{Gibbs:eq-infrared-p3}, we first recall from Corollary \ref{Gibbs:cor-normalization} that $\mathcal{Z}_{n,k,R,R^\prime}\sim 1$ (uniformly in $R$ and $R^\prime$). As a result, it holds that 
\begin{equation}\label{Gibbs:eq-infrared-p4} 
\big\| \nu_{n,k,R,R^\prime} - \nu_{n,k,R} \big\|_{\textup{TV}} 
\lesssim \Big\| \exp\big( - V_{n,k,R}\big) - \exp\big( - V_{n,k,R^\prime}\big)
\Big\|_{L^1(\scrg_{n,k,R})}. 
\end{equation}
We now choose any $q=q_k\in (1,\infty)$ satisfying \eqref{bounds:eq-exponential-q} and let $q^\prime$ be its Hölder-conjugate.  Using the elementary estimate 
\begin{equation*}
    |\exp(-x)-\exp(-y)|\lesssim |x-y| \big(\exp(-x)+\exp(-y)\big) \qquad \forall x,y\in \R 
\end{equation*}
and Hölder's inequality, it follows that 
\begin{align*}
&\,\Big\| \exp\big( - V_{n,k,R}\big) - \exp\big( - V_{n,k,R^\prime}\big)
\Big\|_{L^1(\scrg_{n,k,R})} \\ 
\lesssim&\,  \Big\| \big|V_{n,k,R}- V_{n,k,R^\prime}\big| \Big( \exp\big( - V_{n,k,R}\big) + \exp\big( - V_{n,k,R^\prime}\big) \Big) 
\Big\|_{L^1(\scrg_{n,k,R})} \\ 
\lesssim&\,  \Big\| V_{n,k,R}- V_{n,k,R^\prime} \Big\|_{L^{q^\prime}(\scrg_{n,k,R})}
\cdot \Big\|  \exp\big( - V_{n,k,R}\big) + \exp\big( - V_{n,k,R^\prime}\big)  \Big\|_{L^q(\scrg_{n,k,R})}. 
\end{align*}
After using Lemma \ref{bounds:lem-exponential} and Lemma \ref{bounds:lem-increment}, this completes the proof of the claim \eqref{Gibbs:eq-infrared-p3}. Due to \eqref{Gibbs:eq-infrared-p3}, it now only remains to prove that 
\begin{equation}\label{Gibbs:eq-infrared-p5}
        \big( \Rest[L][\infty] \big)_\# \nu_{n,k,\infty,R^\prime}
    = \operatorname{w-lim}\displaylimits_{R\rightarrow \infty} 
    \big( \Rest[L][R] \big)_\# \nu_{n,k,R,R^\prime}, 
\end{equation}
where the limit refers to the weak limit on $\Czero_{(0)}([1,L])$. In order to prove \eqref{Gibbs:eq-infrared-p5}, it suffices\footnote{To see this, one only has to realize that \eqref{Gibbs:eq-infrared-p6} with $f=1$ implies the convergence of the normalization constants $\mathcal{Z}_{n,k,R,R^\prime}$ as $R\rightarrow \infty$. Once the convergence of the normalization constants is established, the equivalence of \eqref{Gibbs:eq-infrared-p5} and \eqref{Gibbs:eq-infrared-p6} is clear.} to show that
\begin{equation}\label{Gibbs:eq-infrared-p6}
\begin{aligned}
&\lim_{R\rightarrow \infty} \int f\big( \Rest[L][R] \psi_R \big) \exp\Big( - V_{n,k,R^\prime}(\psi_R) \Big) \mathrm{d}\scrg_{n,k,R}(\psi_R) \\ 
=& \,   \int f\big( \Rest[L][\infty] \psi \big) \exp\Big( - V_{n,k,R^\prime}(\psi) \Big) \mathrm{d}\scrg_{n,k}(\psi)
\end{aligned}
\end{equation}
for all $L,R^\prime \geq 1$ and bounded and Lipschitz continuous $f\colon \Czero_{(0)}([1,L])\rightarrow \R$. Since $\Rest[L][R]=\Rest[L][R^\prime]\circ \Rest[R^\prime][R]$
and $V_{n,k,R^\prime}(\psi_R)=V_{n,k,R^\prime}(\Rest[R^\prime][R]\psi_R)$, the left-hand side of \eqref{Gibbs:eq-infrared-p6} can be rewritten as  
\begin{align*}
&\int f\big( \Rest[L][R] \psi_R \big) \exp\Big( - V_{n,k,R^\prime}(\psi_R) \Big) \mathrm{d}\scrg_{n,k,R}(\psi_R) \\ 
=& \int \big( f \circ \Rest[L][R^\prime] \big) \big( \Rest[R^\prime][R] \psi_R \big) \exp\Big( - V_{n,k,R^\prime}(\Rest[R^\prime][R] \psi_R) \Big) \mathrm{d}\scrg_{n,k,R}(\psi_R) \\
=& \int \Big( \big( f \circ \Rest[L][R^\prime] \big) \cdot \exp\big( - V_{n,k,R^\prime} \big)  \Big)\big( \psi_{R^\prime} \big) \, \mathrm{d}\Big( \big( \Rest[R^\prime][R]\big)_{\#} \scrg_{n,k,R}\Big)(\psi_{R^\prime}). 
\end{align*}
Similarly, the right-hand side of \eqref{Gibbs:eq-infrared-p6} can be written as 
\begin{align*}
&\int f\big( \Rest[L][\infty] \psi \big) \exp\Big( - V_{n,k,R^\prime}(\psi) \Big) \mathrm{d}\scrg_{n,k}(\psi)\\
=&\, \int \Big( \big( f \circ \Rest[L][R^\prime] \big) \cdot \exp\big( - V_{n,k,R^\prime} \big)  \Big)\big( \psi_{R^\prime} \big) \, \mathrm{d}\Big( \big( \Rest[R^\prime][\infty]\big)_{\#} \scrg_{n,k}\Big)(\psi_{R^\prime}).
\end{align*}
As a result, \eqref{Gibbs:eq-infrared-p6} is equivalent to 
\begin{equation}\label{Gibbs:eq-infrared-p7}
\begin{aligned}
&\lim_{R\rightarrow \infty}\int \Big( \big( f \circ \Rest[L][R^\prime] \big) \cdot \exp\big( - V_{n,k,R^\prime} \big)  \Big)\big( \psi_{R^\prime} \big) \, \mathrm{d}\Big( \big( \Rest[R^\prime][R]\big)_{\#} \scrg_{n,k,R}\Big)(\psi_{R^\prime}) \\ 
=& \,   \int \Big( \big( f \circ \Rest[L][R^\prime] \big) \cdot \exp\big( - V_{n,k,R^\prime} \big)  \Big)\big( \psi_{R^\prime} \big) \, \mathrm{d}\Big( \big( \Rest[R^\prime][\infty]\big)_{\#} \scrg_{n,k}\Big)(\psi_{R^\prime}). 
\end{aligned}
\end{equation}
Since the identity \eqref{Gibbs:eq-infrared-p7} follows directly from the weak convergence of the Gaussian measures (Lemma \ref{Gaussian:lem-infinite}), the Lipschitz continuity of $V_{n,k,R^\prime}$ (for any fixed $R^\prime$), and the exponential moment estimates (Lemma \ref{bounds:lem-exponential}), this completes the proof.  
\end{proof}

\section{Dynamics}\label{section:dynamics}

In this section, we address the dynamical aspects of Theorem \ref{intro:thm-main}. In Subsection \ref{section:gwp}, we prove the global well-posedness of the equivariant wave maps equation in weighted Hölder spaces. We emphasize that, as previously discussed in the introduction, the well-posedness theory does not rely on any probabilistic properties of the initial data. In Subsection \ref{section:invariance-finite} and Subsection \ref{section:invariance-semi-infinite}, we prove the invariance of the Gibbs measure for the finite intervals $[1,R]$ and the semi-infinite interval $[1,\infty)$, respectively. The main ingredients are the finite-dimensional approximation from Subsection \ref{section:prelim-finite-dimensional} and finite speed of propagation. 

\subsection{Global well-posedness}\label{section:gwp}

In this subsection, we prove all necessary ingredients for the global well-posedness of the equivariant wave maps equation \eqref{intro:eq-phi}. In the unknown $\psi$ from \eqref{prelim:eq-change-of-variables}, the initial-boundary value problems on the semi-infinite and finite intervals are given by  
\begin{equation}\label{dynamics:eq-psi-infty}
\begin{cases}
\begin{alignedat}{3}
\partial_t^2 \psi - \partial_r^2 \psi &= - r^{-1} \Nl\big( r^{-1} \psi \big) \hspace{5ex}
(&&t,r) \in \R \times (1,\infty), \\ 
\psi(t,1) &=0 &&t \in \mathbb{R}, \\ 
\lim_{r\rightarrow \infty} r^{-1} \psi(t,r) &=0  &&t \in \mathbb{R}, \\ 
\big( \psi, \partial_t \psi \big)(0,r) &= (\psi_0,\psi_1)(r) &&r \in (1,\infty),
\end{alignedat}
\end{cases}
\end{equation}
and 
\begin{equation}\label{dynamics:eq-psi-R}
\begin{cases}
\begin{alignedat}{3}
\partial_t^2 \psi_R - \partial_r^2 \psi_R &= - r^{-1} \Nl\big( r^{-1} \psi_R \big) \hspace{5ex} (&&t,r) \in \R \times (1,R), \\ 
\psi_R(t,1) &=0 && t \in \mathbb{R}, \\ 
\psi_R(t,R) &=0 && t \in \mathbb{R}, \\ 
\big( \psi_R, \partial_t \psi_R \big)(0,r) &= (\psi_{R,0},\psi_{R,1})(r) && r \in (1,R).
\end{alignedat}
\end{cases}
\end{equation}
Here, the nonlinearity $\Nl$ is as in \eqref{prelim:eq-Nl}.

\begin{proposition}[Global well-posedness of \eqref{dynamics:eq-psi-R}]\label{dynamics:prop-gwp}
Let $1\leq R < \infty$, let $\alpha \in [0,1)$, and let $-1<\kappa \leq 0$. Then, \eqref{dynamics:eq-psi-R} is globally well-posed in 
$( \Czero_0 \times \Cmone )( [1,R])$
and the unique global solution $\psi$ satisfies
\begin{equation}\label{dynamics:eq-gwp-a-priori}
\big\| (\psi_R,\partial_t \psi_R) \big\|_{(\Czero_0 \times \Cmone)( [1,R])}  \lesssim \langle t \rangle^{|\kappa|} 
\big\| (\psi_{R,0}, \psi_{R,1})\big\|_{( \Czero_0 \times \Cmone )( [1,R])} + \langle t\rangle^2
\end{equation}
for all $t\in \R$. After the obvious modifications, a similar statement also holds in the semi-infinite case $R=\infty$. 
\end{proposition}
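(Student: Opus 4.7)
The overall strategy is a standard Picard iteration based on d'Alembert's formula (Lemma \ref{prelim:lem-dAlembert}), exploiting two algebraic facts about the nonlinearity that follow from $\sin$ being bounded and $1$-Lipschitz: first, $|\Nl(r^{-1}\psi_R)| \leq k(k+1)$ uniformly, and second, $|r^{-1}(\Nl(r^{-1}\psi) - \Nl(r^{-1}\tilde{\psi}))| \lesssim r^{-2}|\psi - \tilde{\psi}|$. Because the Lipschitz factor $r^{-2}$ is bounded on $[1,R]$ (and integrable on $[1,\infty)$), no smallness hypothesis on the initial data is required, and the existence time in the fixed-point argument depends only on $n$, $k$, $R$. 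I would work in a closed ball of $C([-T,T];(C^{0,\alpha,\kappa}_0 \times C^{-1,\alpha,\kappa})([1,R]))$, prove local well-posedness by contraction, and iterate. The same scheme applies verbatim in the semi-infinite case $R=\infty$ once $\Ext_R$ is replaced by $\Ext_\infty$.

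First I would prove the linear estimate
\[
\|(\psi_R^{\textup{lin}}(t), \partial_t \psi_R^{\textup{lin}}(t))\|_{(C^{0,\alpha,\kappa}_0 \times C^{-1,\alpha,\kappa})([1,R])} \lesssim \langle t\rangle^{|\kappa|} \|(\psi_{R,0}, \psi_{R,1})\|_{(C^{0,\alpha,\kappa}_0 \times C^{-1,\alpha,\kappa})([1,R])}
\]
by writing $\psi_R^{\textup{lin}}$ via d'Alembert's formula and tracking the sign flips and fold points of the extended data using Lemma \ref{prelim:lem-extension}. The factor $\langle t\rangle^{|\kappa|}$ enters through the pointwise bound $(r \pm t)^{|\kappa|} \lesssim \langle t\rangle^{|\kappa|} r^{|\kappa|}$ valid for $r \geq 1$; in the finite case the argument is trapped inside $[1,R]$ by $\ext_R$, so this factor can be absorbed into the $R$-dependent implicit constant. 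The $C^{-1,\alpha,\kappa}$-norm of $\partial_t \psi_R^{\textup{lin}}$ is handled analogously after noting that its spatial antiderivative is, up to signs, a linear combination of $\Ext_R \psi_{R,0}$ and the antiderivative of $\Ext_R \psi_{R,1}$.

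Next I would bound the Duhamel contribution. The uniform pointwise estimate $|\Nl| \leq k(k+1)$ together with d'Alembert immediately yields
\[
\big|\Duh_R[-r^{-1}\Nl(r^{-1}\psi_R)](t,r)\big| \lesssim t^2,
\]
which is the source of the $\langle t\rangle^2$ summand in \eqref{dynamics:eq-gwp-a-priori}. The weighted Hölder seminorm of the Duhamel term and the $C^{-1,\alpha,\kappa}$-norm of its time derivative are controlled in the same spirit, using additionally that $h(s, \cdot) := r^{-1}\Nl(r^{-1}\psi_R(s,\cdot))$ inherits Hölder regularity from $\psi_R(s,\cdot)$ via the chain rule (with constant depending only on $n$, $k$, and the current norm of $\psi_R$). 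Applying the Lipschitz bound inside the Duhamel integral produces the contraction estimate
\[
\|\Phi[\psi_R] - \Phi[\tilde{\psi}_R]\|_{L^\infty_{[-T,T]} C^{0,\alpha,\kappa}_0} \lesssim T^2 \|\psi_R - \tilde{\psi}_R\|_{L^\infty_{[-T,T]} C^{0,\alpha,\kappa}_0},
\]
where $\Phi$ denotes the full solution map. For $T = T(n,k,R)$ sufficiently small $\Phi$ is a contraction, and local well-posedness follows; since $T$ does not depend on the initial data norm, one can iterate indefinitely and obtain a global solution whose norm is dominated by the single-step bound \eqref{dynamics:eq-gwp-a-priori}.

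The main obstacle, and the origin of both the $\langle t\rangle^{|\kappa|}$ and $\langle t\rangle^2$ factors in \eqref{dynamics:eq-gwp-a-priori}, is the careful bookkeeping of the weights $r^\kappa$ and $\max(r,\rho)^\kappa$ under (i) the translation $r \mapsto r \pm t$ in d'Alembert's formula and (ii) the double integration over the backward light-cone in the Duhamel term. Both steps require decomposing the integration domain according to whether the arguments lie in the identity region of $\ext_R$, in the reflected region near $r=1$, or — in the finite case — in a higher-period copy produced by the periodic extension, and then tracking how the weights transform through the reflections. Once this bookkeeping is carried out, every remaining estimate reduces to an elementary Hölder inequality.
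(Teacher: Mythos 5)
Your proposal is correct and follows essentially the same route as the paper: d'Alembert's formula, weighted Hölder estimates for the free evolution and the Duhamel term (the paper's Lemma \ref{dynamics:lem-linear}), the uniform bound $|\Nl|\lesssim 1$ to produce the $\langle t\rangle^2$ term in the a priori estimate, and a contraction on a time interval independent of the size of the data. The only (harmless) redundancy is that you invoke Hölder regularity of the forcing $r^{-1}\Nl(r^{-1}\psi_R)$ inherited via the chain rule to control the Hölder seminorm of the Duhamel term, whereas the backward light-cone integral already gains Lipschitz regularity in $r$ from an $L^\infty$ bound on the forcing alone, which is the estimate the paper uses.
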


The condition $\kappa>-1$ is only imposed in order to satisfy the growth condition as $r\rightarrow \infty$ in \eqref{dynamics:eq-psi-infty}. Due to Proposition \ref{dynamics:prop-gwp}, we can introduce the global flows
\begin{equation}\label{dynamics:eq-flow-psi}
\vec{\Psi}= (\Psi_0,\Psi_1) \colon 
\R \times \Czero_0([1,\infty)) \times \Cmone([1,\infty))
\rightarrow  \Czero_0([1,\infty)) \times \Cmone([1,\infty))
\end{equation}
and 
\begin{equation}\label{dynamics:eq-flow-psi-R}
\vec{\Psi}_R= (\Psi_{R,0},\Psi_{R,1}) \colon 
\R \times \Czero_0([1,R]) \times \Cmone([1,R])
\rightarrow \Czero_0([1,R]) \times \Cmone([1,R])
\end{equation}
corresponding to \eqref{dynamics:eq-psi-infty} and \eqref{dynamics:eq-psi-R}, respectively. 
Before we turn to the proof of Proposition \ref{dynamics:prop-gwp}, we record the following estimates for homogeneous and inhomogeneous linear waves. In addition to the proof of Proposition \ref{dynamics:prop-gwp}, these estimates will also be used in Subsection \ref{section:invariance-finite} below.

\begin{lemma}[Linear estimates]\label{dynamics:lem-linear}
Let $1\leq R <\infty$, let $\alpha \in [0,1)$, and let $\kappa \leq 0$. Then, we have the following estimates: 
\begin{enumerate}[label=(\roman*)]
\item\label{dynamics:item-linear-1} (Linear wave estimate) For all $\psi_{R,0} \in \Czero([1,R])$ and $t\in \R$, it holds that 
\begin{equation*}
\Big\| \psi_{R,0} \big( \ext_R(r\pm t)\big) \Big\|_{\Czero([1,R])}
+ \Big\| \partial_t \Big( \psi_{R,0} \big( \ext_R(r\pm t) \big) \Big) \Big\|_{\Cmone([1,R])} 
\lesssim \langle t \rangle^{|\kappa|} \big\| \psi_{R,0} \big\|_{\Czero([1,R])}.
\end{equation*}
\item\label{dynamics:item-linear-2} ($L^\infty$-based Duhamel estimate) For all $T\geq 0$, $t\in [-T,T]$, and $F \in L^\infty([-T,T]\times [1,R])$, it holds that 
\begin{align*}
\Big\| \Duh \big[ F \big](t) \Big\|_{\Czero([1,R])}
+ \Big\| \partial_t \Duh \big[ F \big](t) \Big\|_{\Cmone([1,R])} 
\lesssim |t| \langle T \rangle \big\| F \big\|_{L^\infty([-T,T]\times [1,R])}. 
\end{align*}
\item\label{dynamics:item-linear-3} ($L^2$-based Duhamel estimate)  
Assume that $\alpha \leq 1/2$. Then, it holds for  all $T\geq 0$, $t\in [-T,T]$, $F \in L^1_s L^2_\rho([-T,T]\times [1,R])$, and $\alpha \leq 1/2$ that
\begin{align*}
\Big\| \Duh \big[ F \big](t) \Big\|_{\Czero([1,R])}
+ \Big\| \partial_t \Duh \big[ F \big](t) \Big\|_{\Cmone([1,R])} 
\lesssim& \,  \langle t \rangle \big\| F \big\|_{L^1_s L^2_\rho([-T,T]\times [1,R])}. 
\end{align*}
\end{enumerate}
After the obvious modifications, similar estimates also hold in the semi-infinite case $R=\infty$. 
\end{lemma}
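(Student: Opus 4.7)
The three estimates lend themselves to three distinct approaches: (i) uses only the structural properties of the extension operator, (ii) relies on crude $L^\infty$-type bounds combined with a reduction of $\partial_t$ to $\partial_r$ via the wave equation, and (iii) is driven by the standard energy inequality for the Dirichlet wave equation combined with Cauchy--Schwarz. In all three parts, the norms reduce to the corresponding estimates on the two building blocks $\psi_{R,0}(\ext_R(r\pm t))$ and $\Duh_R[F]$, and the growth factor $\langle t\rangle^{|\kappa|}$ or $\langle t\rangle$ ultimately comes from the bound $s\leq r\langle t\rangle$ for $s=\ext_R(r\pm t)$.

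For part (i), the key observation is that $\ext_R$ is $1$-Lipschitz with $\ext_R(r)=r$ for $r\in[1,R]$, by Lemma \ref{prelim:lem-extension}. Setting $s:=\ext_R(r\pm t)$, the inequality $|s-r|\leq |t|$ combined with $r\geq 1$ yields $s\leq r\langle t\rangle$, hence $(s/r)^{|\kappa|}\leq \langle t\rangle^{|\kappa|}$, which controls the growth part of the $C^{0,\alpha,\kappa}$-norm; the H\"older part follows analogously using the additional inequality $|\ext_R(r\pm t)-\ext_R(r'\pm t)|\leq |r-r'|$. For the $C^{-1,\alpha,\kappa}$-bound on the time derivative, I would exploit the chain-rule identity $\partial_t[\psi_{R,0}(\ext_R(\cdot\pm t))] = \pm\partial_r[\psi_{R,0}(\ext_R(\cdot\pm t))]$, which gives
\begin{equation*}
\int_1^r\partial_t\bigl[\psi_{R,0}(\ext_R(\rho\pm t))\bigr]\,\drho = \pm\bigl[\psi_{R,0}(\ext_R(r\pm t)) - \psi_{R,0}(\ext_R(1\pm t))\bigr],
\end{equation*}
reducing the $C^{-1,\alpha,\kappa}$-bound to the $C^{0,\alpha,\kappa}$-estimate just established.

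For parts (ii) and (iii), $\|\Duh_R[F](t)\|_{C^{0,\alpha,\kappa}}$ is controlled by interpolating two estimates: a crude size bound ($|\Duh_R[F](t,r)|\leq t^2\|F\|_{L^\infty}$ in (ii), and $|\Duh_R[F](t,r)|\lesssim\sqrt{|t|}\,\|F\|_{L^1_s L^2_\rho}$ in (iii) via Cauchy--Schwarz on the backward light cone) and a spatial-difference bound obtained from the symmetric-difference of the two cones ($\lesssim |t|\,|r-r'|\,\|F\|_{L^\infty}$ in (ii) and $\lesssim \sqrt{|r-r'|}\,\|F\|_{L^1 L^2}$ in (iii)). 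The hypothesis $\alpha\leq 1/2$ in (iii) is precisely what makes this interpolation yield the factor $\langle t\rangle$ rather than a larger power. For the $C^{-1,\alpha,\kappa}$-bound on $\partial_t\Duh_R[F]$, in (ii) I would integrate the wave equation to write
\begin{equation*}
\int_1^r\partial_t\Duh_R[F](t,\rho)\,\drho = \int_0^t\bigl[\partial_r\Duh_R[F](s,r) - \partial_r\Duh_R[F](s,1)\bigr]\ds + \int_0^t\!\int_1^r F(s,\rho)\,\drho\,\ds
\end{equation*}
and bound each piece as a reparametrized version of the $L^\infty$-analysis already carried out for $\Duh_R[F]$ (with $\partial_r$ playing the role of $\partial_t$); in (iii), I would use the energy inequality $\|\partial_t\Duh_R[F](t)\|_{L^2}\lesssim \|F\|_{L^1 L^2}$ together with Cauchy--Schwarz to obtain $|\int_{r'}^r\partial_t\Duh_R[F](t,\rho)\,\drho|\lesssim \sqrt{|r-r'|}\,\|F\|_{L^1 L^2}$.

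The main technical point will be fitting these bounds into the weights $r^\kappa$ and $\max(r,r')^\kappa$: the growth part of the $C^{0,\alpha,\kappa}$-norm of $\int_1^r\partial_t\Duh_R[F]\,\drho$ in (iii) forces one to control $r^\kappa\sqrt{r-1}$ uniformly on $[1,R]$, which holds precisely when $\kappa\leq -1/2$, and the H\"older part forces $\max(r,r')^\kappa|r-r'|^{1/2-\alpha}\lesssim 1$, which requires $\kappa\leq \alpha-1/2$. Both conditions are comfortably satisfied by the paper's choice $\alpha=1/2-\delta$ and $\kappa=-1/2-\delta$; analogous constraints arise in (ii) from the term $\int_0^t\int_1^r F\,\drho\,\ds$. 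All bounds obtained in this way are uniform in $R$, so the semi-infinite case $R=\infty$ follows without further modification.
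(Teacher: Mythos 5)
Your treatment of part (i) and of the $C^{0,\alpha,\kappa}$-components of parts (ii) and (iii) coincides with the paper's: the Lipschitz property of $\ext_R$ together with $\ext_R(r)=r$ on $[1,R]$, the cone-volume and symmetric-difference bounds for (ii), and Cauchy--Schwarz on the backward cone for (iii), followed by interpolating the uniform pointwise bound against the $|r-r'|^{1/2}$ difference bound (which is exactly where $\alpha\leq 1/2$ and $\kappa\leq 0$ enter). One small correction in (iii): the periodized extension $\Ext_R F$ over an interval of length $2(t-s)$ can cover $\sim 1+|t-s|/R$ copies of $[1,R]$, so the pointwise bound is $\langle t\rangle\|F\|_{L^1_sL^2_\rho}$ rather than $\sqrt{|t|}\,\|F\|_{L^1_sL^2_\rho}$; this is harmless for the stated conclusion.

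Where you genuinely diverge is the $C^{-1,\alpha,\kappa}$-estimate for $\partial_t\Duh_R[F]$, and this is where the gap is. The paper's route is the identity \eqref{prelim:eq-linear-p1}, which writes $\partial_t\Duh_R[F]$ as an exact $\partial_r$-derivative of cone integrals, so that the $C^{-1,\alpha,\kappa}$-norm reduces to $C^{0,\alpha,\kappa}$-type bounds on quantities that are uniformly bounded in $r$ — hence valid for every $\kappa\leq 0$. Your route (integrating the PDE in (ii), the energy estimate in (iii)) instead produces the zeroth-order term $\int_0^t\ds\int_1^r\drho\, F(s,\rho)$, respectively the growing bound $\sqrt{r-1}\,\|F\|_{L^1_sL^2_\rho}$. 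In (iii) you correctly identify the resulting restriction $\kappa\leq-1/2$, which covers the paper's application but not the stated range $\kappa\leq 0$. In (ii) you defer the "analogous constraints" without computing them; if you do, you find that $\sup_r r^\kappa\big|\int_0^t\ds\int_1^r\drho\,F\big|$ forces $\kappa\leq-1$, since for $F\equiv 1$ and small $t$ one has $\partial_t\Duh_R[1](t,\rho)=t$ away from the boundary and hence $\int_1^r\partial_t\Duh_R[1]\,\drho\sim t(r-1)$. This excludes not only $\kappa=0$ but even the working choice $\kappa=-1/2-\delta$, so your argument does not close for part (ii) with only $\|F\|_{L^\infty}$ on the right-hand side.

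I would not, however, dismiss the term you found as an artifact of your method. Differentiating d'Alembert's formula gives $2\,\partial_t\Duh_R[F](t,r)=\int_0^t\ds\,\big[(\Ext_RF)(s,r+(t-s))+(\Ext_RF)(s,r-(t-s))\big]$, and writing this as a total $r$-derivative of the two cone integrals necessarily leaves the remainder $2\int_0^t\ds\,(\Ext_RF)(s,r)$; the paper's identity \eqref{prelim:eq-linear-p1} as written (with its minus sign and no zeroth-order term) actually reproduces $2\,\partial_r\Duh_R[F]$ rather than $2\,\partial_t\Duh_R[F]$. So your computation is the correct one, and the mismatch between your admissible range of $\kappa$ and the lemma's stated range reflects a real restriction in the $C^{-1,\alpha,\kappa}$-components of (ii) and (iii); in the application to Proposition \ref{dynamics:prop-gwp} the offending term is harmless only because the actual source satisfies $|F(\rho)|\lesssim\rho^{-1}$, so that $\int_1^r F\,\drho\lesssim\log r$ is absorbed by $r^\kappa$ for any $\kappa<0$. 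To submit a complete proof you should either prove the weighted variant just described or restrict the stated range of $\kappa$.
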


\begin{proof}
We separate the proofs of \ref{dynamics:item-linear-1}, \ref{dynamics:item-linear-2}, and \ref{dynamics:item-linear-3}. \\

\emph{Proof of \ref{dynamics:item-linear-1}:} 
The estimate of the $\Czero$-norm follows directly from the Lipschitz continuity of $\ext_R$ and $\ext_R(r)=r$ for all $r\in [1,R]$ (as stated in Lemma \ref{prelim:lem-extension}). Since 
\begin{equation*}
\partial_t \Big( \psi_{R,0} \big( \ext_R ( r \pm t ) \big) \Big)
= \pm \partial_r \Big( \psi_{R,0} \big( \ext_R ( r \pm t ) \big) \Big),
\end{equation*}
the $\Cmone$-estimate for the time-derivative follows directly from the $\Czero$-estimate.\\ 

\emph{Proof of \ref{dynamics:item-linear-2}:}
We first prove the $\Czero$-estimate. To this end, we first bound
\begin{align*}
\Big| \Duh \big[ F \big](t,r) \Big| 
&= \Big| \int_0^t \ds \int_{r-(t-s)}^{r+(t-s)} \drho \, 
(\Ext_R F)(s,\rho) \Big| \\ 
&\leq \Big| \int_0^t \ds \int_{r-(t-s)}^{r+(t-s)} \drho \, 1 \Big| \times  \big\| (\Ext_R F)(s,\rho) \big\|_{L^\infty([-T,T] \times \R)} \\ 
&\leq t^2 \big\| F \big\|_{L^\infty([-T,T]\times [1,R])}. 
\end{align*}
For any $r,r^\prime \in [1,R]$, we further estimate
\begin{align*}
    &\Big| \Duh \big[ F \big](t,r) - \Duh \big[ F \big](t,r^\prime) \Big| \\
    =& 
\Big| \int_0^t \ds \int_{r-(t-s)}^{r+(t-s)} \drho \, 
(\Ext_R F)(s,\rho) - 
 \int_0^t \ds \int_{r^\prime-(t-s)}^{r^\prime+(t-s)} \drho \, 
(\Ext_R F)(s,\rho) \Big| \\ 
\leq& \Big|  \int_0^t \ds \int_{r^\prime+(t-s)}^{r+(t-s)} \drho \, 
(\Ext_R F)(s,\rho) \Big| 
+ \Big|  \int_0^t \ds \int_{r^\prime-(t-s)}^{r-(t-s)} \drho \, 
(\Ext_R F)(s,\rho) \Big|  \\ 
\leq & 4 |t| |r-r^\prime|  \big\| (\Ext_R F)(s,\rho) \big\|_{L^\infty([-T,T] \times \R)} \\
\leq & 4 |t| |r-r^\prime| \big\| F \big\|_{L^\infty([-T,T]\times [1,R])}. 
\end{align*}
Since $\varkappa \leq 0$, this completes the proof of the $\Czero$-estimate. In order to prove the $\Cmone$-estimate for the time-derivative, we first observe that 
\begin{equation}\label{prelim:eq-linear-p1}
\begin{aligned}
&\partial_t \int_0^t \ds \int_{r-(t-s)}^{r+(t-s)} \drho \, 
(\Ext_R F)(s,\rho)  \\
=& \partial_r \int_0^t \ds \int_{r}^{r+(t-s)} \drho \, 
(\Ext_R F)(s,\rho) 
- \partial_r \int_0^t \ds \int_{r}^{r-(t-s)} \drho \, 
(\Ext_R F)(s,\rho) . 
\end{aligned}
\end{equation}
Due to the definition of the $\Cmone$-norm, which contains an integral, the estimate of the $\Cmone$-norm of the time-derivative can be deduced similarly as the $\Czero$-estimate above. \\

\emph{Proof of \ref{dynamics:item-linear-3}:} 
Due to the identity \eqref{prelim:eq-linear-p1}, it suffices to prove the $\Czero$-estimate. To this end, we first prove a pointwise estimate. It holds that 
\begin{equation}\label{prelim:eq-linear-p2}
\begin{aligned}
\Big| \Duh \big[ F \big](t,r) \Big| 
&= \Big| \int_0^t \ds \int_{r-(t-s)}^{r+(t-s)} \drho \, 
(\Ext_R F)(s,\rho) \Big| \\ 
&\leq  \int_0^t \ds \,  \bigg( \int_{r-(t-s)}^{r+(t-s)} \drho \, 1 \bigg)^{1/2}   \big\| (\Ext_R F)(s,\rho) \big\|_{L^2_\rho([r-(t-s),r+(t-s)])}  \\ 
&\lesssim \langle  t \rangle^{1/2} \int_0^t \ds \,  \big\| (\Ext_R F)(s,\rho) \big\|_{L^2_\rho([r-(t-s),r+(t-s)])}. 
\end{aligned}
\end{equation}
Furthermore, from the definition of the extension operator $\Ext_R$, it follows that 
\begin{equation}\label{prelim:eq-linear-p3} 
  \big\| (\Ext_R F)(s,\rho) \big\|_{L^2_\rho([r-(t-s),r+(t-s)])} 
  \lesssim \Big( 1 + \frac{|t-s|}{R} \Big)^{1/2}
  \big\| F \big\|_{L^2_\rho([1,R])} \lesssim \langle t \rangle^{1/2} \big\| F \big\|_{L^2_\rho([1,R])}. 
\end{equation}
By combining \eqref{prelim:eq-linear-p2} and \eqref{prelim:eq-linear-p3}, we obtain that 
\begin{equation}\label{prelim:eq-linear-p4}
\big| \Duh \big[ F \big](t,r)\big| \lesssim \langle t \rangle \big\| F \big\|_{L^1_s L^2_\rho([0,t]\times [1,R])}. 
\end{equation}
Similarly, it holds for all $1\leq r^\prime \leq r \leq R$ that 
\begin{align*}
        &\Big| \Duh \big[ F \big](t,r) - \Duh \big[ F \big](t,r^\prime) \Big| \\
    =& 
\Big| \int_0^t \ds \int_{r-(t-s)}^{r+(t-s)} \drho \, 
(\Ext_R F)(s,\rho) - 
 \int_0^t \ds \int_{r^\prime-(t-s)}^{r^\prime+(t-s)} \drho \, 
(\Ext_R F)(s,\rho) \Big| \\ 
\leq& \Big|  \int_0^t \ds \int_{r^\prime+(t-s)}^{r+(t-s)} \drho \, 
(\Ext_R F)(s,\rho) \Big| 
+ \Big|  \int_0^t \ds \int_{r^\prime-(t-s)}^{r-(t-s)} \drho \, 
(\Ext_R F)(s,\rho) \Big|  \\ 
\lesssim& |r-r^\prime|^{1/2} \int_0^t \ds \Big( \Big\| (\Ext_R F)(s,\rho)  \Big\|_{L^2_\rho([r^\prime+(t-s),r+(t-s)])} 
+ \Big\| (\Ext_R F)(s,\rho)  \Big\|_{L^2_\rho([r^\prime-(t-s),r-(t-s)])}\Big). 
\end{align*}
Since $r,r^\prime \in [1,R]$, it holds that 
\begin{align*}
\Big\| (\Ext_R F)(s,\rho)  \Big\|_{L^2_\rho([r^\prime+(t-s),r+(t-s)])} 
+ \Big\| (\Ext_R F)(s,\rho)  \Big\|_{L^2_\rho([r^\prime-(t-s),r-(t-s)])}
\lesssim \big\| F \big\|_{L^2_\rho([1,R])}. 
\end{align*}
Thus, it follows that
\begin{equation}\label{prelim:eq-linear-p5}
\Big| \Duh \big[ F \big](t,r) - \Duh \big[ F \big](t,r^\prime) \Big| \lesssim |r-r^\prime|^{1/2} \big\| F \big\|_{L^1_s L^2_\rho([0,t]\times [1,R])}. 
\end{equation}
Since $\alpha \leq 1/2$ and $\kappa \leq 0$, \eqref{prelim:eq-linear-p4} and \eqref{prelim:eq-linear-p5} imply the desired estimate.
\end{proof}

Equipped with Lemma \ref{dynamics:lem-linear}, we are now ready to prove Proposition \ref{dynamics:prop-gwp}. 

\begin{proof}[Proof of Proposition \ref{dynamics:prop-gwp}]
We only prove the a-priori estimate \eqref{dynamics:eq-gwp-a-priori}, since the remaining claims follow from a standard contraction mapping argument. By time-reversal symmetry, it suffices to prove \eqref{dynamics:eq-gwp-a-priori} for $t\geq 0$. To this end, let $\psi_R$ be a global solution of \eqref{dynamics:eq-psi-R}. Using Lemma \ref{prelim:lem-dAlembert}, it follows that 
\begin{align*}
\psi_R(t,r) &= \frac{\big( \Ext_R \psi_{R,0} \big)(r+t)+ \big( \Ext_R \psi_{R,0} \big)(r-t)}{2} 
+\frac{1}{2} \int_{r-t}^{r+t} \drho \big( \Ext \psi_{R,1} \big)(\rho) \\ 
&+ \Duh \Big[ r^{-1} \Nl\big( r^{-1} \psi_R \big)\Big](t,r). 
\end{align*}
Using Lemma \ref{dynamics:lem-linear},  it follows that 
\begin{align*}
&\big\| (\psi_R,\partial_t \psi_R) \big\|_{(\Czero_0 \times \Cmone)( [1,R])}  \\
\lesssim&\,  \langle t \rangle^{|\kappa|} 
\big\| (\psi_{R,0}, \psi_{R,1})\big\|_{( \Czero_0 \times \Cmone )( [1,R])} + \langle t \rangle^2 \Big\| \rho^{-1} \Nl\big( \rho^{-1} \psi_R \big)\Big\|_{L_s^\infty L_\rho^\infty([0,t]\times [1,R])}. 
\end{align*}
Using the crude estimates $\rho^{-1}\leq 1$ and $|\Nl|\leq 1$, this yields the desired estimate. 
\end{proof}

\subsection{Invariance for finite intervals}\label{section:invariance-finite}

In the previous subsection, we established the global well-posedness of \eqref{dynamics:eq-psi-infty} and \eqref{dynamics:eq-psi-R}. In contrast to the proof of global well-posedness, however, our proof of invariance of the Gibbs measure treats the finite and semi-infinite interval separately. In this subsection, we treat finite intervals.

\begin{proposition}[Invariance for finite-intervals]\label{dynamics:prop-invariance-finite}
Let $n\geq 0$, let $k\geq 1$, and let $R_0 \leq R <\infty$. Then, the Gibbs measure $\vec{\nu}_{n,k,R}$ is invariant under the dynamics of \eqref{dynamics:eq-psi-R}. 
\end{proposition}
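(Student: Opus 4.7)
The plan is to apply the classical Bourgain--Zhidkov approximation scheme to the finite-dimensional truncation \eqref{prelim:eq-psi-R-N}. Let $\vec{\Psi}_R$ denote the global flow of \eqref{dynamics:eq-psi-R} from \eqref{dynamics:eq-flow-psi-R}, and let $\vec{\Psi}_R^{(N)}$ denote the analogous flow of the frequency-truncated system \eqref{prelim:eq-psi-R-N}, whose global well-posedness in $(\Czero_0\times\Cmone)([1,R])$ follows from essentially the same contraction argument as Proposition \ref{dynamics:prop-gwp}. The objective is to transfer the elementary invariance of $\vec{\nu}_{n,k,R}^{(N)}$ under $\vec{\Psi}_R^{(N)}$ to the limit $N\to\infty$.

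For the truncated invariance, the equation \eqref{prelim:eq-psi-R-N} decouples: applying $1-\Proj$ shows that $(1-\Proj)\psi_R^{(N)}$ satisfies the free one-dimensional wave equation with Dirichlet boundary conditions, while $\Proj\psi_R^{(N)}$ evolves under a finite-dimensional Hamiltonian system on $V_{R,\leq N}$ whose Hamiltonian is $\widetilde{E}_{n,k,R}^{(N)}$ restricted to $V_{R,\leq N}\otimes V_{R,\leq N}$. For the finite-dimensional block, Liouville's theorem combined with conservation of the Hamiltonian yields invariance of $\exp\bigl(-\widetilde{E}_{n,k,R}^{(N)}\bigr)$ times Lebesgue measure. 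For the complementary high-frequency modes, diagonalizing $-\partial_r^2$ shows that the free wave equation preserves the restriction of $\scrg_{0,0,R}\otimes\scrw_R$. Using the representation \eqref{Gibbs:eq-switch-Gaussian} of $\nu_{n,k,R}^{(N)}$ against $\scrg_{0,0,R}$---whose density depends only on $\Proj\psi_R$ and therefore factors trivially over the high-frequency modes---these two ingredients combine to yield invariance of $\vec{\nu}_{n,k,R}^{(N)}$ under $\vec{\Psi}_R^{(N)}$.

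Next, to pass to the limit, the key analytic input is that $\vec{\Psi}_R^{(N)}(t,\cdot)\to \vec{\Psi}_R(t,\cdot)$ in $(\Czero_0\times\Cmone)([1,R])$ as $N\to\infty$, for every fixed $t\in\R$ and every initial datum. The difference $\Delta_R^{(N)}(t):=\psi_R(t)-\psi_R^{(N)}(t)$ satisfies a wave equation with zero data and forcing
\begin{equation*}
-(1-\Proj)\bigl(r^{-1}\Nl(r^{-1}\psi_R)\bigr) - \Proj\bigl(r^{-1}\Nl(r^{-1}\psi_R) - r^{-1}\Nl(r^{-1}\Proj\psi_R^{(N)})\bigr).
\end{equation*}
The first summand tends to zero in $L^1_s L^2_\rho$ as $N\to\infty$ by Lemma \ref{prelim:lem-projection} and the uniform Hölder bound \eqref{dynamics:eq-gwp-a-priori} on $\psi_R$. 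The second, after decomposing $\psi_R - \Proj\psi_R^{(N)} = \Delta_R^{(N)} + (1-\Proj)\psi_R^{(N)}$, splits into a term Lipschitz in $\Delta_R^{(N)}$ (uniformly in $N$, using that $\Nl'$ is bounded and $r^{-1}\leq 1$) plus a low-regularity contribution again vanishing by Lemma \ref{prelim:lem-projection} and \eqref{dynamics:eq-gwp-a-priori}. Combining the Duhamel estimates of Lemma \ref{dynamics:lem-linear} with Gronwall's inequality then yields the desired convergence, and \eqref{dynamics:eq-gwp-a-priori} applied to $\vec{\Psi}_R^{(N)}$ provides an $N$-independent dominating function.

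Finally, for any bounded continuous $F$ on $(\Czero_0\times\Cmone)([1,R])$ and any $t\in\R$, the truncated invariance reads
\begin{equation*}
\int F\bigl(\vec{\Psi}_R^{(N)}(t,\vec\psi_R)\bigr)\,\mathrm{d}\vec{\nu}_{n,k,R}^{(N)}(\vec\psi_R) = \int F\,\mathrm{d}\vec{\nu}_{n,k,R}^{(N)}.
\end{equation*}
The total-variation convergence $\vec{\nu}_{n,k,R}^{(N)}\to\vec{\nu}_{n,k,R}$ from Proposition \ref{Gibbs:prop-Gibbs}.\ref{Gibbs:item-ultraviolet}, combined with dominated convergence based on the previous paragraph, sends both sides respectively to $\int F(\vec{\Psi}_R(t,\vec\psi_R))\,\mathrm{d}\vec{\nu}_{n,k,R}$ and $\int F\,\mathrm{d}\vec{\nu}_{n,k,R}$, proving invariance. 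The main technical difficulty lies in the flow convergence in the preceding paragraph: since the Gibbs measure is supported on Hölder distributions of relatively low regularity rather than on the energy space, one must work through d'Alembert's formula (Lemma \ref{prelim:lem-dAlembert}) and the Hölder-space linear estimates of Lemma \ref{dynamics:lem-linear} rather than the standard energy method, and manage carefully the fact that $\Proj$ is naturally bounded on $L^2$ rather than on the weighted Hölder spaces.
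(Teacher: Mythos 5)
Your proposal follows essentially the same route as the paper: invariance of the truncated measure $\vec{\nu}^{(N)}_{n,k,R}$ under $\vec{\Psi}_R^{(N)}$ via Liouville plus the Gaussian invariance on high frequencies, an approximation lemma giving $\vec{\Psi}_R^{(N)}\to\vec{\Psi}_R$ on $(\Czero_0\times\Cmone)([1,R])$, and the total-variation convergence of Proposition \ref{Gibbs:prop-Gibbs}.\ref{Gibbs:item-ultraviolet} to pass to the limit; this is exactly the paper's four-term telescoping argument, with the approximation lemma as the main analytic input. The one place you deviate is inside the approximation lemma: you split $\psi_R-\Proj\psi_R^{(N)}=\Delta_R^{(N)}+(1-\Proj)\psi_R^{(N)}$, so the low-frequency remainder sits on the \emph{truncated} solution, and controlling $\|(1-\Proj)\psi_R^{(N)}\|_{L^2}$ via \eqref{prelim:eq-projection-3} then requires a Hölder a priori bound on $\psi_R^{(N)}$ that is uniform in $N$ --- which does hold, but only if the a priori estimate for the truncated flow is derived with the $L^1_sL^2_\rho$-based Duhamel estimate of Lemma \ref{dynamics:lem-linear}.\ref{dynamics:item-linear-3} (since $\Proj$ is uniformly bounded on $L^2$ but not on $L^\infty$, cf.\ \eqref{prelim:eq-projection-2}). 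The paper avoids this by writing the remainder as $(1-\Proj)\psi_R+\Proj\Delta_R^{(N)}$, so that only bounds on the untruncated solution, already supplied by Proposition \ref{dynamics:prop-gwp}, are needed.
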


In Subsection \ref{section:prelim-finite-dimensional}, we introduced a finite-dimensional approximation of $\psi_R$, which is given by 
\begin{equation}\label{dynamics:eq-psi-R-N}
\begin{cases}
\begin{alignedat}{3}
\big( \partial_t^2 - \partial_r^2 \big) \psi_{R}^{(N)} &= - \Proj \Big( r^{-1} \Nl\big( r^{-1} \Proj \psi_{R}^{(N)} \big) \Big) \hspace{5ex} (&&t,r) \in \R \times (1,R), \\ 
\psi_{R}^{(N)}(t,1) &=0  &&t \in \mathbb{R}, \\ 
\psi_{R}^{(N)}(t,R) &=0 &&t \in \mathbb{R}, \\ 
\big( \psi_{R}^{(N)}, \partial_t \psi_{R}^{(N)} \big)(0,r) &= (\psi_{R,0},\psi_{R,1})(r) &&r \in (1,R). 
\end{alignedat}
\end{cases}
\end{equation}
The main ingredient in the proof of Proposition \ref{dynamics:prop-invariance-finite} is the following approximation lemma.

\begin{lemma}[Approximation lemma]\label{dynamics:lem-approximation} 
Let $R\geq 1$ be fixed, let $1\leq N<\infty$, let $0<\alpha<1/2$, and let $\kappa \leq 0$. Then, \eqref{dynamics:eq-psi-R-N} is globally well-posed in $(\Czero_0 \times \Cmone)([1,R])$. Furthermore, let $\psi_R$ and $\psi^{(N)}_R$ be the unique global solutions of \eqref{dynamics:eq-psi-R} and \eqref{dynamics:eq-psi-R-N}, respectively. For all $T\geq 0$ and $t\in [-T,T]$, it then holds that
\begin{equation}\label{dynamics:eq-approximation}
\begin{aligned}
&\Big\| (\psi_R, \partial_t \psi_R)(t) - (\psi_{R}^{(N)}, \partial_t \psi_{R}^{(N)})(t) \Big\|_{(\Czero_0 \times \Cmone)([1,R])} \\ 
\leq& \,  C(R,T)   \Big( 1 + \big\| (\psi_{R,0},\psi_{R,1}) \big\|_{(\Czero_0 \times \Cmone)([1,R])} \Big) N^{-\alpha}. 
\end{aligned}
\end{equation}
\end{lemma}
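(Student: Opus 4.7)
I first establish global well-posedness of \eqref{dynamics:eq-psi-R-N}. Since $\Proj$ is built from the Dirichlet eigenbasis of $-\partial_r^2$, it commutes with $\partial_r^2$ on $\dot{H}^1_0 \cap H^2$, so one may decompose $\psi_R^{(N)} = (1-\Proj)\psi_R^{(N)} + \Proj \psi_R^{(N)}$. Applying $1-\Proj$ to \eqref{dynamics:eq-psi-R-N} shows that $(1-\Proj)\psi_R^{(N)}$ evolves as a free linear wave with data $((1-\Proj)\psi_{R,0},(1-\Proj)\psi_{R,1})$, while applying $\Proj$ shows that $\Proj \psi_R^{(N)}$ satisfies a closed ODE system in the first $N$ sine coefficients, driven by the globally bounded nonlinearity $\Proj[r^{-1}\Nl(r^{-1}\Proj\psi_R^{(N)})]$. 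Since $|\Nl| \leq k(k+1)$, Cauchy-Lipschitz gives global existence of the ODE, and Lemma \ref{dynamics:lem-linear}.\ref{dynamics:item-linear-1} controls the free part; summing both contributions yields the global solution in $(\Czero_0 \times \Cmone)([1,R])$.

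\textbf{Setting up the difference.} Next I set $\delta \psi := \psi_R - \psi_R^{(N)}$, which has zero Cauchy data and satisfies $(\partial_t^2 - \partial_r^2)\delta\psi = I + II$ with
\begin{align*}
I &:= -(1-\Proj)\bigl[r^{-1}\Nl(r^{-1}\psi_R)\bigr],\\
II &:= -\Proj\bigl[r^{-1}\Nl(r^{-1}\psi_R) - r^{-1}\Nl(r^{-1}\Proj \psi_R^{(N)})\bigr].
\end{align*}
The $L^2$-based Duhamel estimate of Lemma \ref{dynamics:lem-linear}.\ref{dynamics:item-linear-3} (valid since $\alpha < 1/2$) then yields
\begin{equation*}
\bigl\|(\delta\psi,\partial_t\delta\psi)(t)\bigr\|_{(\Czero \times \Cmone)([1,R])} \lesssim \langle t \rangle \int_0^{|t|} \bigl(\|I(s)\|_{L^2([1,R])} + \|II(s)\|_{L^2([1,R])}\bigr)\,ds.
\end{equation*}

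\textbf{Estimating the two error terms.} For $I$, the Lipschitz continuity of $\Nl$ gives $\|r^{-1}\Nl(r^{-1}\psi_R)\|_{C^{0,\alpha}([1,R])} \lesssim_R 1 + \|\psi_R\|_{\Czero([1,R])}$, and \eqref{prelim:eq-projection-3} combined with the a priori bound from Proposition \ref{dynamics:prop-gwp} delivers
\begin{equation*}
\|I(s)\|_{L^2([1,R])} \leq C(R,T) \, N^{-\alpha} \bigl(1 + \|(\psi_{R,0},\psi_{R,1})\|_{(\Czero_0 \times \Cmone)([1,R])}\bigr).
\end{equation*}
For $II$, the Lipschitz bound on $\Nl$ together with $L^2$-contractivity of $\Proj$ gives $\|II(s)\|_{L^2} \lesssim \|\delta\psi(s)\|_{L^2} + \|(1-\Proj)\psi_R(s)\|_{L^2}$, where on the bounded interval $[1,R]$ the first summand is controlled by $C(R)\|\delta\psi(s)\|_{\Czero([1,R])}$ and the second by $C(R,T)N^{-\alpha}(1 + \|(\psi_{R,0},\psi_{R,1})\|_{(\Czero_0 \times \Cmone)([1,R])})$ via another application of \eqref{prelim:eq-projection-3} and Proposition \ref{dynamics:prop-gwp}. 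Substituting the resulting bounds and invoking Gronwall's inequality for $\|\delta\psi(s)\|_{\Czero([1,R])}$ produces \eqref{dynamics:eq-approximation}.

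\textbf{Main obstacle.} The only subtle point is the choice of norm in which to measure the projection error: a naive $L^\infty$ estimate on $(1-\Proj)f$ would cost a factor $N$ through \eqref{prelim:eq-projection-2} and would completely destroy the $N^{-\alpha}$ gain. The resolution is to route all error bounds through the $L^2$-based Duhamel estimate Lemma \ref{dynamics:lem-linear}.\ref{dynamics:item-linear-3}, which is precisely compatible with the Hölder-to-$L^2$ improvement \eqref{prelim:eq-projection-3} and delivers the claimed rate $N^{-\alpha}$ at the price of $R$ and $T$-dependent constants.
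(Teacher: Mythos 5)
Your proposal is correct and follows essentially the same route as the paper: the same Duhamel representation of the difference, the same $L^2$-based Duhamel estimate from Lemma \ref{dynamics:lem-linear}.\ref{dynamics:item-linear-3} combined with the projection bound \eqref{prelim:eq-projection-3}, and a Gronwall closing step. The only cosmetic differences are that you merge the paper's second and third error terms into your single term $II$ (estimating it via $\|(1-\Proj)\psi_R\|_{L^2}+\|\delta\psi\|_{L^2}$, which is equivalent) and that you argue well-posedness of the truncated system by splitting off the free high-frequency part rather than rerunning the contraction argument with \eqref{prelim:eq-projection-2}; both variants are fine.
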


Similar as in \eqref{dynamics:eq-flow-psi} and \eqref{dynamics:eq-flow-psi-R} above, we denote the corresponding global flow by 
\begin{equation}\label{dynamics:eq-flow-psi-R-N}
\vec{\Psi}_R^{(N)}= (\Psi_{R,0}^{(N)},\Psi_{R,1}^{(N)}) \colon 
\R \times \Czero_0([1,R]) \times \Cmone([1,R])
\rightarrow \Czero_0([1,R]) \times \Cmone([1,R]). 
\end{equation}

\begin{proof}
Since $R\geq 1$ is fixed, we simplify the notation by writing $\psi$ and $\psi^{(N)}$ instead of $\psi_R$ and $\psi^{(N)}_R$, respectively. 
Due to the soft estimate \eqref{prelim:eq-projection-2} from Lemma \ref{prelim:lem-projection}, the global well-posedness (for  any fixed $N\geq 1$) follows exactly as in Subsection \ref{section:gwp}. Thus, it remains to prove the difference estimate \eqref{dynamics:eq-approximation}. Due to time-reflection symmetry, it suffices to treat the case $t\geq 0$. \\

From \eqref{dynamics:eq-psi-R} and \eqref{dynamics:eq-psi-R-N}, it follows that 
\begin{equation}
\psi - \psi^{(N)} = - \Duh_R \Big[ r^{-1} \Nl\big( r^{-1} \psi \big) - \Proj \Big( r^{-1} \Nl \big( r^{-1} \Proj \psi^{(N)} \big) \Big) \Big]. 
\end{equation}
Using Lemma \ref{dynamics:lem-linear}, we obtain that 
\begin{equation}\label{dynamics:eq-approximation-p0}
\begin{aligned}
&\, \Big\| (\psi, \partial_t \psi)(t) - (\psi^{(N)}, \partial_t \psi^{(N)})(t) \Big\|_{(\Czero_0 \times \Cmone)([1,R])} \\
\lesssim& \,  \Big\| \rho^{-1} \Nl\big( \rho^{-1} \psi \big) - 
\Proj \Big( \rho^{-1} \Nl \big( \rho^{-1} \Proj \psi^{(N)} \big) \Big) \Big\|_{L_s^1 L_\rho^2([0,t]\times [1,R])}. 
\end{aligned}
\end{equation}
We now decompose
\begin{align}
&\rho^{-1} \Nl\big( \rho^{-1} \psi \big) - 
\Proj \Big( \rho^{-1} \Nl \big( \rho^{-1} \Proj \psi^{(N)} \big) \Big)  \notag \\
=& \, \big( 1 -\Proj \big) \Big( \rho^{-1} \Nl\big( \rho^{-1} \psi \big) \Big) \label{dynamics:eq-approximation-p1} \\ 
+& \, \Proj \Big( \rho^{-1} \Nl\big( \rho^{-1} \psi \big) - \rho^{-1} \Nl\big( \rho^{-1} \Proj \psi \big) \Big) \label{dynamics:eq-approximation-p2} \\
+& \,  \Proj \Big( \rho^{-1} \Nl\big( \rho^{-1} \Proj \psi \big) - \rho^{-1} \Nl\big( \rho^{-1} \Proj \psi^{(N)} \big) \Big). 
\label{dynamics:eq-approximation-p3} 
\end{align}
Using Lemma \ref{prelim:lem-projection}, the Lipschitz continuity of $\Nl$, and the crude estimate $\rho^{-1}\leq 1$ for all $\rho \in [1,R]$, it easily follows that 
\begin{align*}
\big\|  \eqref{dynamics:eq-approximation-p1} \big\|_{L^2_\rho([1,R])} 
&\lesssim R^{1/2-\kappa} \big( R/N \big)^{\alpha} \big\| \psi \big\|_{\Czero([1,R])}, \\ 
\big\|  \eqref{dynamics:eq-approximation-p2} \big\|_{L^2_\rho([1,R])} 
&\lesssim R^{1/2-\kappa} \big( R/N \big)^{\alpha} \big\| \psi \big\|_{\Czero([1,R])}, \\ 
\big\|  \eqref{dynamics:eq-approximation-p3} \big\|_{L^2_\rho([1,R])} 
&\lesssim R^{1/2-\kappa}  \big\| \psi - \psi^{(N)} \big\|_{\Czero([1,R])}.
\end{align*}
Inserting this back into \eqref{dynamics:eq-approximation-p0}, it follows that
\begin{equation*}
\begin{aligned}
&\, \Big\| (\psi, \partial_t \psi)(t) - (\psi^{(N)}, \partial_t \psi^{(N)})(t) \Big\|_{(\Czero_0 \times \Cmone)([1,R])} \\
\lesssim& \,  R^{1/2-\kappa} \big( R/N \big)^{\alpha} \int_0^t \ds \,   
\big\| \psi(s) \big\|_{\Czero([1,R])}  + R^{1/2-\kappa} \int_0^t \ds \,  \big\| (\psi -\psi^{(N)})(s) \big\|_{\Czero([1,R])}. 
\end{aligned}
\end{equation*}
Using Gronwall's inequality, it follows that 
\begin{equation*}
\Big\| (\psi, \partial_t \psi)(t) - (\psi^{(N)}, \partial_t \psi^{(N)})(t) \Big\|_{(\Czero_0 \times \Cmone)([1,R])}  \leq C(R,T)  N^{-\alpha}
 \sup_{s\in [0,t]} \big\| \psi(s) \big\|_{\Czero([1,R])} . 
\end{equation*}
Together with Proposition \ref{dynamics:prop-gwp}, this implies the desired estimate. 
\end{proof}

Equipped with the approximation lemma (Lemma \ref{dynamics:lem-approximation}), we can now prove Proposition \ref{dynamics:prop-invariance-finite}. 

\begin{proof}[Proof of Proposition \ref{dynamics:prop-invariance-finite}]
Throughout the proof, we let $\alpha:=1/2-\delta$ and $\kappa:=-1/2-\delta$. 
We first recall that, as stated in Proposition \ref{dynamics:prop-gwp} and Lemma \ref{dynamics:lem-approximation},   \eqref{dynamics:eq-psi-R} and \eqref{dynamics:eq-psi-R-N} are globally well-posed on the support of the Gibbs measure and that the corresponding global flows are denoted by $\vec{\Psi}_R$ and $\vec{\Psi}_R^{(N)}$. In order to prove the proposition, we have to prove for all bounded, Lipschitz continuous $f \colon (\Czero_0 \times \Cmone)([1,R]) \rightarrow \R$ and all $t\in \R$ that 
\begin{equation}\label{dynamics:eq-invariance-R-p1}
\E_{\vec{\nu}_{n,k,R}} \big[ f \circ \vec{\Psi}_R(t) \big] 
= \E_{\vec{\nu}_{n,k,R}} \big[ f \big]. 
\end{equation}
To this end, we let $N\geq 1$ and decompose 
\begin{align}
\E_{\vec{\nu}_{n,k,R}} \big[ f \circ \vec{\Psi}_R(t) \big] 
- \E_{\vec{\nu}_{n,k,R}} \big[ f \big]
&= \E_{\vec{\nu}_{n,k,R}} \big[ f \circ \vec{\Psi}_R(t) \big] 
- \E_{\vec{\nu}_{n,k,R}} \big[ f \circ \vec{\Psi}_R^{(N)}(t) \big] 
\label{dynamics:eq-invariance-R-p2} \\ 
&+  \E_{\vec{\nu}_{n,k,R}} \big[ f \circ \vec{\Psi}_R^{(N)}(t) \big]  
-  \E_{\vec{\nu}_{n,k,R}^{(N)}} \big[ f \circ \vec{\Psi}_R^{(N)}(t) \big] 
\label{dynamics:eq-invariance-R-p3} \\ 
&+ \E_{\vec{\nu}_{n,k,R}^{(N)}} \big[ f \circ \vec{\Psi}_R^{(N)}(t) \big]  
- \E_{\vec{\nu}_{n,k,R}^{(N)}} \big[ f  \big] 
\label{dynamics:eq-invariance-R-p4} \\ 
&+\E_{\vec{\nu}_{n,k,R}^{(N)}} \big[ f  \big] 
- \E_{\vec{\nu}_{n,k,R}} \big[ f  \big] 
\label{dynamics:eq-invariance-R-p5}. 
\end{align}
The first term \eqref{dynamics:eq-invariance-R-p2} can be estimated using Lemma \ref{dynamics:lem-approximation}, the representation of $\vec{\nu}_{n,k,R}$ from Proposition \ref{Gibbs:prop-Gibbs}, and the moment bounds from Lemma \ref{bounds:lem-exponential}. The second term \eqref{dynamics:eq-invariance-R-p3} can be estimated using Proposition \ref{Gibbs:prop-Gibbs}. The third term \eqref{dynamics:eq-invariance-R-p4} vanishes due to the invariance of $\vec{\nu}_{n,k,R}^{(N)}$ under $\vec{\Psi}_R^{(N)}$, which follows from ODE-results. Finally, the fourth term \eqref{dynamics:eq-invariance-R-p5} can be estimated using Proposition \ref{Gibbs:prop-Gibbs}. In total, it follows that 
\begin{equation*}
\Big| \E_{\vec{\nu}_{n,k,R}} \big[ f \circ \vec{\Psi}_R(t) \big] 
- \E_{\vec{\nu}_{n,k,R}} \big[ f \big] \Big| 
\lesssim_{f,R,t} \liminf_{N\rightarrow \infty} \Big( N^{-1/2+\delta} 
+ \big\| \vec{\nu}_{n,k,R}^{\,(N)} - \vec{\nu}_{n,k,R} \big\|_{\textup{TV}} \Big) =0, 
\end{equation*}
which yields \eqref{dynamics:eq-invariance-R-p1}. 
\end{proof}

\subsection{Invariance for the semi-infinite interval}
\label{section:invariance-semi-infinite}

In this subsection, we prove the invariance of the Gibbs measures for the semi-infinite interval.

\begin{proposition}[Invariance for the semi-infinite interval]\label{dynamics:prop-invariance}
Let $n\geq 0$ and let $k\geq 1$. Then, the Gibbs measure $\vec{\nu}_{n,k}$ is invariant under the dynamics of \eqref{dynamics:eq-psi-infty}. 
\end{proposition}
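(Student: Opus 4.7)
The plan is to reduce the invariance of $\vec{\nu}_{n,k}$ under the semi-infinite flow to the finite-interval invariance (Proposition \ref{dynamics:prop-invariance-finite}) via finite speed of propagation, then pass to the infinite-volume limit using the cylinder-level convergence of the Gibbs measures from Proposition \ref{Gibbs:prop-Gibbs}. Since $\vec{\nu}_{n,k}$ is determined by its cylinder projections, it suffices to verify
\begin{equation*}
\E_{\vec{\nu}_{n,k}}\big[ g \circ \vecRest[L][\infty] \circ \vec{\Psi}(t) \big] = \E_{\vec{\nu}_{n,k}}\big[ g \circ \vecRest[L][\infty] \big]
\end{equation*}
for every $L \geq 1$, every $t \in \R$, and every bounded Lipschitz $g \colon (\Czero_0 \times \Cmone)([1,L]) \to \R$.

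The first step is to establish a finite speed of propagation identity: for every $R \geq L + |t| + 2$ and every $\vec{\psi}$ in the support of $\vec{\nu}_{n,k}$,
\begin{equation*}
\vecRest[L][\infty] \, \vec{\Psi}(t) \vec{\psi} = \vecRest[L][R] \, \vec{\Psi}_R(t) \, \vecRestZ[R][\infty] \vec{\psi}.
\end{equation*}
Writing both sides via d'Alembert's formula (Lemma \ref{prelim:lem-dAlembert}), the linear contributions agree because the extensions $\Ext_\infty$ and $\Ext_R$ coincide on $[2-R, R]$ whenever the underlying data coincide on $[1, R]$ (Lemma \ref{prelim:lem-extension}), and this interval already covers every backward characteristic from $[1,L] \times \{t\}$ provided $L + |t| \leq R - 1$. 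The semilinear contribution is then closed by a Gronwall iteration along the shrinking cone $[1, L + (t-s)]$ for $s \in [0, t]$, using the uniform boundedness and Lipschitz continuity of $r^{-1} \Nl(r^{-1} \cdot)$ together with the a priori bound \eqref{dynamics:eq-gwp-a-priori}.

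Once this identity is in hand, the claim follows from the telescoping chain
\begin{align*}
\E_{\vec{\nu}_{n,k}}\big[ g \circ \vecRest[L][\infty] \circ \vec{\Psi}(t) \big]
&= \lim_{R \to \infty} \E_{\vec{\nu}_{n,k,R}}\big[ g \circ \vecRest[L][R] \circ \vec{\Psi}_R(t) \big] \\
&= \lim_{R \to \infty} \E_{\vec{\nu}_{n,k,R}}\big[ g \circ \vecRest[L][R] \big]
= \E_{\vec{\nu}_{n,k}}\big[ g \circ \vecRest[L][\infty] \big].
\end{align*}
The middle equality is the finite-interval invariance (Proposition \ref{dynamics:prop-invariance-finite}), while the first and third equalities invoke the cylinder convergence of Proposition \ref{Gibbs:prop-Gibbs}, applied to bounded continuous cylinder functions obtained by composing $g$, the restriction/extension operators, and the flow $\vec{\Psi}_R(t)$ on the cylinder $[1, L+|t|+1]$. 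For the first equality, one realizes the left-hand integrand as a common cylinder function of the restricted initial datum (using finite speed of propagation for both the semi-infinite and finite-interval flows) and then applies the weak convergence of the restricted measures.

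The main obstacle will be the finite speed of propagation step: because $\vec{\nu}_{n,k}$ places mass on initial data growing like $r^{1/2+\delta}$ rather than decaying, one cannot simply transplant compactly-supported arguments, and the claim is not a pointwise coincidence but full equality on an entire weighted Hölder cylinder. The natural strategy is an $L^\infty$-based cone Duhamel iteration (Lemma \ref{dynamics:lem-linear}) exploiting that $r^{-1} \Nl(r^{-1} \psi)$ is uniformly bounded and globally Lipschitz in $\psi$, so that the difference of the two solutions on the cone satisfies a homogeneous Gronwall inequality and must vanish. All other inputs — cylinder convergence of the Gibbs measures, continuity of $\vec{\Psi}_R$ on cylinders, and finite-interval invariance — are already supplied by the preceding sections.
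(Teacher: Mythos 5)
Your proposal is correct and follows essentially the same route as the paper: reduce to cylinder functions, use finite speed of propagation to identify the restricted semi-infinite flow with a finite-interval flow applied to (boundary-adjusted) restricted data, invoke the finite-interval invariance of Proposition \ref{dynamics:prop-invariance-finite}, and pass to the limit via the weak convergence of $\vec{\nu}_{n,k,R}$ from Proposition \ref{Gibbs:prop-Gibbs}. The only differences are cosmetic bookkeeping — the paper inserts two intermediate scales $K+|t|+1\leq L\leq M$ and places the $\vecRestZ$ operator between them so that the integrand is a fixed bounded continuous function on a fixed cylinder before weak convergence is applied, which is exactly the "common cylinder function" realization you describe.
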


The main ingredients in the following proof are the weak convergence of the Gibbs measures $\vec{\nu}_{n,k,R}$ as $R\rightarrow \infty$ (Proposition \ref{Gibbs:prop-Gibbs}), the invariance of the Gibbs measures for finite intervals (Proposition \ref{dynamics:prop-invariance-finite}), and finite speed of propagation. 

\begin{proof}[Proof of Proposition \ref{dynamics:prop-invariance}]
Let $\vec{\Psi}$ and $\vec{\Psi_R}$ be the global flows from \eqref{dynamics:eq-flow-psi} and \eqref{dynamics:eq-flow-psi-R}. In order to prove the invariance of $\vec{\nu}_{n,k}$, it suffices to prove for all $t\in \R$ and $K\geq 1$ that 
\begin{equation}\label{dynamics:eq-invariance-p1}
\big( \vecRest[K][\infty] \big)_\# \vec{\Psi}(t)_\# \vec{\nu}_{n,k} = \big( \vecRest[K][\infty] \big)_\#  \vec{\nu}_{n,k},
\end{equation}
which are viewed as measures on $(\Czero_{(0)} \times \Cmone)([1,K])$ with $\alpha:=1/2-\delta$ and $\kappa:=-1/2-\delta$. In order to utilize our earlier results, we need to insert additional restriction operators. To this end, we let $L,M\geq 1$ satisfy
\begin{equation}\label{dynamics:eq-invariance-p2}
K + |t| + 1 \leq L \leq M. 
\end{equation}
Due to finite speed of propagation, it holds that 
\begin{equation}\label{dynamics:eq-invariance-p3}
\vecRest[K][\infty] \circ \vec{\Psi}(t) = \vecRest[K][L] \circ \vec{\Psi}_L(t) \circ \vecRestZ[L][M] \circ \vecRest[M][\infty]. 
\end{equation}
The operator $\vecRestZ[L][M]$ is as in Definition \ref{prelim:def-restriction} and guarantees that the argument of $\vec{\Psi}_L(t)$ satisfies the zero Dirichlet boundary conditions. Using  \eqref{dynamics:eq-invariance-p3} and the weak convergence of $\vec{\nu}_{n,k,R}$ (as in Corollary \ref{Gaussian:cor-white-noise} and Proposition \ref{Gibbs:prop-Gibbs}), it follows that 
\begin{equation}\label{dynamics:eq-invariance-p4}
\begin{aligned}
\big( \vecRest[K][\infty] \big)_\# \vec{\Psi}(t)_\# \vec{\nu}_{n,k} 
&=  \big( \vecRest[K][L] \circ \vec{\Psi}_L(t) \circ \vecRestZ[L][M] \big)_\# \big( \vecRest[M][\infty] \big)_\# \vec{\nu}_{n,k} \\
&= \operatorname{w-lim}\displaylimits_{R\rightarrow \infty}  \big( \vecRest[K][L] \circ \vec{\Psi}_L(t) \circ \vecRestZ[L][M] \big)_\#  \big( \vecRest[M][R] \big)_\# \vec{\nu}_{n,k,R}. 
\end{aligned}
\end{equation}
The second identity in \eqref{dynamics:eq-invariance-p4} involves the weak limit on $(\Czero_{(0)} \times \Cmone)([1,K])$. Provided that $R\geq M$, we also have the identity
\begin{equation}\label{dynamics:eq-invariance-p5}
 \vecRest[K][L] \circ \vec{\Psi}_L(t) \circ \vecRestZ[L][M] \circ \vecRest[M][R] = \vecRest[K][R] \circ \vec{\Psi}_R(t),
\end{equation}
which is similar to \eqref{dynamics:eq-invariance-p3}. From \eqref{dynamics:eq-invariance-p5}, it follows that 
\begin{equation}\label{dynamics:eq-invariance-p6} 
\begin{aligned}
\operatorname{w-lim}\displaylimits_{R\rightarrow \infty}  \big( \vecRest[K][L] \circ \vec{\Psi}_L(t) \circ \vecRestZ[L][M] \big)_\#  \big( \vecRest[M][R] \big)_\# \vec{\nu}_{n,k,R}
= \operatorname{w-lim}\displaylimits_{R\rightarrow \infty} \big(\vecRest[K][R]\big)_{\#} \vec{\Psi}_R(t)_\# \vec{\nu}_{n,k,R}. 
\end{aligned}
\end{equation}
Using the invariance of the Gibbs measure for finite intervals (Proposition \ref{dynamics:prop-invariance-finite}), we obtain that 
\begin{equation}\label{dynamics:eq-invariance-p7} 
\begin{aligned}
 \operatorname{w-lim}\displaylimits_{R\rightarrow \infty} \big(\vecRest[K][R]\big)_{\#} \vec{\Psi}_R(t)_\# \vec{\nu}_{n,k,R}
 = \operatorname{w-lim}\displaylimits_{R\rightarrow \infty} \big(\vecRest[K][R]\big)_{\#}  \vec{\nu}_{n,k,R}.
\end{aligned}
\end{equation}
By using the weak convergence of $\vec{\nu}_{n,k,R}$ (as in Corollary \ref{Gaussian:cor-white-noise} and Proposition \ref{Gibbs:prop-Gibbs}) for a second time, it follows that
\begin{equation}\label{dynamics:eq-invariance-p8} 
\begin{aligned}
\operatorname{w-lim}\displaylimits_{R\rightarrow \infty} \big(\vecRest[K][R]\big)_{\#}  \vec{\nu}_{n,k,R}
= \big(\vecRest[K][\infty]\big)_{\#}  \vec{\nu}_{n,k}. 
\end{aligned}
\end{equation}
The desired identity \eqref{dynamics:eq-invariance-p1} now follows by combining  \eqref{dynamics:eq-invariance-p4}, \eqref{dynamics:eq-invariance-p6}, \eqref{dynamics:eq-invariance-p7}, and \eqref{dynamics:eq-invariance-p8}, which completes our argument.  
\end{proof}

\section{Proof of Theorem \ref{intro:thm-main} and Corollary \ref{intro:cor-failure-soliton}}\label{section:proof}

In this section, we prove the main results of this article. Due to our earlier lemmas and propositions from Section \ref{section:Gaussian}, Section \ref{section:Gibbs}, and Section \ref{section:dynamics}, the remaining proofs are relatively short.

\begin{proof}[Proof of Theorem \ref{intro:thm-main}]
We rigorously define the Gibbs measure $\vec{\mu}_{n,k}$ as the push-forward of $\vec{\nu}_{n,k}$, which has been constructed in Proposition \ref{Gibbs:prop-Gibbs}, under the transformation 
\begin{equation*}
\big( \psi_0, \psi_1 \big) \mapsto \big( \phi_0 , \phi_1 \big) := \big( Q_{n,k}+ r \psi_0, r \psi_1 \big). 
\end{equation*}
From the definition (and Proposition \ref{Gibbs:prop-Gibbs}), it directly follows that $\vec{\mu}_{n,k}$ is supported on the state space $\State$. 
Using the change of variables from \eqref{prelim:eq-change-of-variables}, the global well-posedness of \eqref{intro:eq-phi} and the invariance of the Gibbs measures follows directly from Proposition \ref{dynamics:prop-gwp} and Proposition \ref{dynamics:prop-invariance}, respectively. 
\end{proof}

It remains to prove Corollary \ref{intro:cor-failure-soliton}, which essentially follows from Theorem \ref{intro:thm-main} (or Proposition \ref{dynamics:prop-invariance}) and the Poincar\'{e} recurrence theorem.

\begin{proof}[Proof of Corollary \ref{intro:cor-failure-soliton}] 
Throughout this proof, we work in the unknown $\psi$ from \eqref{prelim:eq-change-of-variables}. In this unknown, the linearized equation \eqref{intro:eq-phi-lin} takes the form
\begin{equation}\label{proof:eq-p1}
\partial_t^2 \psi_{\textup{lin}} - \partial_r^2 \psi_{\textup{lin}} + \frac{k(k+1)}{r^2} \cos\big( 2Q_{n,k}\big) \psi_{\textup{lin}} = 0. 
\end{equation}
To simplify the notation, we let $\alpha:=1/2-\delta$, let $\kappa:=-1/2-\delta$, let $\vec{\Psi}$ be the global flow from \eqref{dynamics:eq-flow-psi}, and let $\vec{\Psi}_{\textup{lin}}$ be the global flow of \eqref{proof:eq-p1}. By time-reversal symmetry, it suffices to prove the claim in \eqref{intro:eq-failure-soliton} for $t\rightarrow \infty$.
Thus, it remains to prove that  
\begin{equation}\label{proof:eq-p2}
\inf_{(\psi^+_0,\psi^+_1)} \limsup_{t\rightarrow \infty} 
\big\| \vec{\Psi}(t) (\psi_0,\psi_1) - \vec{\Psi}_{\textup{lin}}(t) (\psi_0^+,\psi_1^+) \big\|_{(\Czero \times \Cmone)([1,2])} >0
\end{equation}
holds $\vec{\nu}_{n,k}$-almost surely, 
where the infimum is taken over all $(\psi_0^+,\psi_1^+)\in (\Czero_0 \times \Cmone)([1,\infty))$. We first show that \eqref{proof:eq-p2} follows from a simpler statement which does not explicitly involve $(\psi_0^+,\psi_1^+)$. For any $t\in \R$ and $\tau \in[0,1/4]$, it follows from the group properties of the flows $\vec{\Psi}$ and $\vec{\Psi}_{\textup{lin}}$, finite speed of propagation, and the boundedness of $\vec{\Psi}_{\textup{lin}}$ (as in Lemma \ref{dynamics:lem-linear}) that 
\begin{align*}
&
\big\| 
\vec{\Psi}_{\textup{lin}}(-\tau) \vec{\Psi}(\tau) \vec{\Psi}(t) (\psi_0,\psi_1) 
-  \vec{\Psi}(t) (\psi_0,\psi_1)  \big\|_{(\Czero \times \Cmone)([1,3/2])}\\
\leq&\, 
\big\| 
\vec{\Psi}_{\textup{lin}}(-\tau)  \vec{\Psi}(t+\tau) (\psi_0,\psi_1) 
-  \vec{\Psi}_{\textup{lin}}(-\tau) \vec{\Psi}_{\textup{lin}}(t+\tau) (\psi_0^+,\psi_1^+)  \big\|_{(\Czero \times \Cmone)([1,3/2])}\\ 
+&\, 
\big\| 
 \vec{\Psi}_{\textup{lin}}(-\tau) \vec{\Psi}_{\textup{lin}}(t+\tau) (\psi_0^+,\psi_1^+) 
 - \vec{\Psi}_{\textup{lin}}(t) (\psi_0^+,\psi_1^+)\big\|_{(\Czero \times \Cmone)([1,3/2])} \\
 +& \, 
\big\| 
 \vec{\Psi}_{\textup{lin}}(t) (\psi_0^+,\psi_1^+) - \vec{\Psi}(t) (\psi_0,\psi_1) \big\|_{(\Czero \times \Cmone)([1,3/2])}  \\
 \lesssim& \, 
\big\| 
 \vec{\Psi}(t+\tau) (\psi_0,\psi_1) 
-   \vec{\Psi}_{\textup{lin}}(t+\tau) (\psi_0^+,\psi_1^+)  \big\|_{(\Czero \times \Cmone)([1,2])} \\
 +& \, 
\big\| 
   \vec{\Psi}(t) (\psi_0,\psi_1) 
   - \vec{\Psi}_{\textup{lin}}(t) (\psi_0^+,\psi_1^+) 
   \big\|_{(\Czero \times \Cmone)([1,2])}.
\end{align*}
As a result, it suffices to prove that
\begin{equation}\label{proof:eq-p3}
\limsup_{t\rightarrow \infty} \sup_{\tau \in [0,1/4]} 
\big\| 
\vec{\Psi}_{\textup{lin}}(-\tau) \vec{\Psi}(\tau) \vec{\Psi}(t) (\psi_0,\psi_1) 
-  \vec{\Psi}(t) (\psi_0,\psi_1)  \big\|_{(\Czero \times \Cmone)([1,3/2])} >0
\end{equation}
holds $\vec{\nu}_{n,k}$-almost surely. 
To this end, we let $\epsilon>0$ be arbitrary and define the event 
\begin{align*}
A_{n,k,\epsilon} :=& \Big\{ (\psi_0,\psi_1)\in (\Czero_0 \times \Cmone)([1,\infty))\colon  \\
& \sup_{\tau \in [0,1/4]} 
\big\| 
\vec{\Psi}_{\textup{lin}}(-\tau) \vec{\Psi}(\tau)  (\psi_0,\psi_1) 
-  (\psi_0,\psi_1)  \big\|_{(\Czero \times \Cmone)([1,3/2])} 
\geq \epsilon \Big\}.  
\end{align*}
Using the invariance of $\vec{\nu}_{n,k}$ (as in Proposition \ref{dynamics:prop-invariance}) and Poincar\'{e}'s recurrence theorem, it follows 
that there exists a set $B_{n,k,\epsilon}\subseteq A_{n,k,\epsilon}$ such that $\vec{\nu}_{n,k}(B_{n,k,\epsilon})=\vec{\nu}_{n,k}(A_{n,k,\epsilon})$ and such that, for all $(\psi_0,\psi_1)\in B_{n,k,\epsilon}$, it holds that $\vec{\Psi}(j)(\psi_0,\psi_1)\in A_{n,k,\epsilon}$ for infinitely many integers $j\geq 1$. In particular, it holds  for  all $(\psi_0,\psi_1) \in B_{n,k,\epsilon}$ that
\begin{equation}\label{proof:eq-p5}
\limsup_{j\rightarrow \infty} \sup_{\tau \in [0,1/4]} 
\big\| 
\vec{\Psi}_{\textup{lin}}(-\tau) \vec{\Psi}(\tau) \vec{\Psi}(j) (\psi_0,\psi_1) 
-  \vec{\Psi}(j) (\psi_0,\psi_1)  \big\|_{(\Czero \times \Cmone)([1,3/2])} \geq \epsilon
\end{equation}
and thus \eqref{proof:eq-p3} is satisfied. It therefore only remains to prove that
\begin{equation}\label{proof:eq-p6}
\lim_{\epsilon \downarrow 0} \vec{\nu}_{n,k}\big( A_{n,k,\epsilon} \big) =1. 
\end{equation}
Since $\nu_{n,k}$ is absolutely continuous with respect to the Gaussian measure $\scrg_{n,k}$ (Proposition \ref{Gibbs:prop-Gibbs}), it is clear that $\vec{\nu}_{n,k}$-almost surely the nonlinearity
\begin{equation*}
\sin\big( 2 (Q_{n,k} + r^{-1} \psi_0)\big) 
- \sin\big( 2 Q_{n,k}\big) 
- 2r^{-1} \cos\big( 2 Q_{n,k} \big) \psi_0
\end{equation*}
is not identically zero on the spatial interval $[1,5/4]$. Together with local well-posedness, this implies that
\begin{equation}\label{proof:eq-p7}
\big\| 
\vec{\Psi}_{\textup{lin}}(-\tau) \vec{\Psi}(\tau)  (\psi_0,\psi_1) 
-  (\psi_0,\psi_1)  \big\|_{(\Czero \times \Cmone)([1,3/2])} >0 
\end{equation}
holds $\vec{\nu}_{n,k}$-almost surely. Using the continuity from below of the probability measure $\vec{\nu}_{n,k}$, this implies \eqref{proof:eq-p6} and therefore completes the proof.
\end{proof}
\begin{remark}\label{proof:rem-generalization}
The proof of Corollary \ref{intro:cor-failure-soliton} primarily uses the invariance of $\vec{\nu}_{n,k}$ under $\vec{\Psi}$, the group properties of $\vec{\Psi}$ and $\vec{\Psi}_{\textup{lin}}$, and the boundedness of $\vec{\Psi}_{\textup{lin}}$ on $\Czero_0 \times \Cmone$. 
In order to obtain $\eqref{intro:eq-failure-soliton}$ on the interval $[1,2]$ rather than the whole interval $[1,\infty)$, we also used finite speed of propagation. All of these ingredients (except possibly invariance) are available in many situations, and our proof can easily be generalized to different flows than $\vec{\Psi}_{\textup{lin}}$ and other norms than $\Czero_0 \times \Cmone$. 
\end{remark}

\begin{appendix}

\section{Elements of probability theory}\label{section:probability}

In this appendix, we recall results from probability theory. To this end, we let $(\Omega,\mathcal{F},\mathbb{P})$ be a probability space and let $\mathbb{E}$ be the corresponding expectation operator. 

\begin{lemma}[Gaussian hypercontractivity]\label{prelim:lem-properties-Gaussian}
Let $g$ be a Gaussian random variables and let $p\geq 1$. Then, it holds that
\begin{equation*}
\E \big[ |g|^p \big]^{1/p} \lesssim \sqrt{p} \, \E \big[ g^2 \big]^{1/2}. 
\end{equation*}
\end{lemma}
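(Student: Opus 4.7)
The plan is to reduce to the standard normal case by homogeneity, then estimate the absolute moments via an explicit Gamma function computation combined with Stirling's approximation.

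First, I would note that both sides of the inequality are homogeneous of degree one in $g$, so dividing by $\sigma := \E[g^2]^{1/2}$ (and assuming without loss of generality that $g$ is centered, since for a Gaussian with mean $\mu$ and variance $\sigma^2$ one has $|\mu| \leq \sqrt{\mu^2 + \sigma^2} = \E[g^2]^{1/2}$, so the general case follows from the centered case via the triangle inequality $\|g\|_p \leq |\mu| + \|g - \mu\|_p$) reduces the claim to showing $\E[|g|^p]^{1/p} \lesssim \sqrt{p}$ for $g$ standard normal.

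Next, using the standard normal density and the substitution $u = t^2/2$, I would compute
$$\E[|g|^p] = \frac{2}{\sqrt{2\pi}} \int_0^\infty t^p e^{-t^2/2} \, dt = \frac{2^{p/2}}{\sqrt{\pi}} \, \Gamma\!\left(\frac{p+1}{2}\right).$$
Taking $p$-th roots and applying Stirling's approximation $\Gamma(x) \leq C\, x^{x - 1/2} e^{-x}$ for $x \geq 1$ with $x = (p+1)/2$ then yields $\Gamma(\tfrac{p+1}{2})^{1/p} \lesssim \sqrt{p}$, while the constant prefactors from $2^{p/2}$ and $\pi^{-1/(2p)}$ can be absorbed into the implicit constant.

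I do not anticipate any real obstacle; this is a classical computation. As an alternative route, one could instead use the sub-Gaussian tail bound $\P(|g| > t) \leq 2 \exp(-t^2/(2\sigma^2))$ together with the layer cake formula $\E[|g|^p] = p \int_0^\infty t^{p-1} \P(|g| > t)\, dt$, which avoids explicit reference to the Gamma function and reduces the whole argument to a single Gaussian integral.
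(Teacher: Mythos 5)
Your proof is correct and complete. Note that the paper itself offers no proof of this lemma at all: it is recalled as a standard fact (a special case of Gaussian hypercontractivity, stated in the appendix with only a remark that the phenomenon is more general), so there is no argument in the paper to compare against. Your route — homogeneity reduction to the standard normal, the exact moment identity $\E[|g|^p]=2^{p/2}\pi^{-1/2}\,\Gamma\bigl(\tfrac{p+1}{2}\bigr)$, and Stirling to extract the $\sqrt{p}$ growth — is the classical elementary verification, and your handling of the (in the paper, irrelevant) non-centered case via $|\mu|\leq \E[g^2]^{1/2}$ and the triangle inequality in $L^p$ is also fine. The alternative layer-cake/sub-Gaussian-tail argument you sketch would work equally well.
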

We remark that Gaussian hypercontractivity is a much more general phenomenon than stated in Lemma \ref{prelim:lem-properties-Gaussian}, since it also applies to polynomials in infinitely many Gaussian variables. 
In the next lemma, we recall a version of Kolmogorov's continuity theorem, which is used to obtain the growth and Hölder estimates in Section \ref{section:Gaussian}. 

\begin{lemma}[{Kolmogorov's continuity theorem \cite[p.182]{S11}}]\label{prelim:lem-kolmogorov}
Let $T\geq 1$ and let $(X(t))_{1\leq t \leq T}$ be a continuous stochastic process. Assume that there exist $C>0$, $p\geq 1$, and $\alpha \in (0,1]$ such that the estimate
\begin{equation*}
\E \Big[ \big| X(t) - X(s) \big|^p \Big]^{1/p} \leq C |t-s|^{\frac{1}{p}+\alpha} 
\end{equation*}
is satisfied for all $1\leq s,t \leq T$. Then, it holds for all $0<\beta<\alpha$ that 
\begin{equation*}
\E \Big[ \sup_{\substack{1\leq s,t \leq T\colon \\ s\neq t }} 
\bigg( \frac{|X(t)-X(s)|}{|t-s|^\beta} \bigg)^p \bigg]^{1/p} 
\leq \frac{5}{(1-2^{-\alpha})(1-2^{\beta-\alpha})}  C T^{\frac{1}{p}+\alpha-\beta}.  
\end{equation*}
\end{lemma}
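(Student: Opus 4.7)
The plan is to prove this via the standard dyadic chaining argument, sometimes attributed to Kolmogorov--Chentsov. The whole idea is to replace the continuum $[1,T]$ by a nested sequence of dyadic grids, control the maximum one-step increment on each grid via the moment hypothesis and a union bound, and then reconstruct the Hölder seminorm by telescoping.

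For each integer $n \geq 0$, introduce the dyadic partition points $t_{n,k} = 1 + k \cdot 2^{-n}(T-1)$ for $0 \leq k \leq 2^n$, and the maximum one-step increment
\begin{equation*}
\Delta_n := \max_{0 \leq k < 2^n} \bigl| X(t_{n,k+1}) - X(t_{n,k}) \bigr|.
\end{equation*}
By a trivial union bound $\|\max_k Y_k\|_{L^p}^p \leq \sum_k \|Y_k\|_{L^p}^p$, together with the moment hypothesis applied to each consecutive pair (whose spacing is $(T-1)2^{-n} \leq T \cdot 2^{-n}$), we get
\begin{equation*}
\| \Delta_n \|_{L^p} \leq \bigl(2^n\bigr)^{1/p} \cdot C \bigl((T-1) 2^{-n}\bigr)^{1/p + \alpha} \leq C \, T^{1/p+\alpha} \, 2^{-n\alpha}.
\end{equation*}
This is the key quantitative input at each scale; the gap $\alpha - \beta > 0$ will make the resulting geometric series converge.

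Next, for any two distinct $s,t \in [1,T]$, I would choose $n_0 \geq 0$ so that $(T-1) 2^{-n_0-1} < |t-s| \leq (T-1) 2^{-n_0}$, and connect $s$ to the nearest grid point of $D_{n_0}$ (and similarly $t$) via a telescoping sequence of dyadic grid points at levels $n_0, n_0+1, n_0+2, \ldots$. Continuity of $X$ ensures the series converges pointwise to the actual increment, yielding the chaining estimate
\begin{equation*}
|X(t) - X(s)| \leq 2 \sum_{n \geq n_0} \Delta_n.
\end{equation*}
Dividing by $|t-s|^\beta \gtrsim T^{-\beta} 2^{n_0 \beta}$ (up to a factor from the $T-1$ vs $T$ and the factor $2^{-1}$ in the choice of $n_0$), one obtains
\begin{equation*}
\sup_{s \neq t} \frac{|X(t)-X(s)|}{|t-s|^\beta} \; \lesssim \; T^{-\beta} \sup_{n_0 \geq 0} 2^{n_0 \beta} \sum_{n \geq n_0} \Delta_n \; \leq \; T^{-\beta} \sum_{n \geq 0} 2^{n\beta} \Delta_n,
\end{equation*}
where the last inequality uses $2^{n_0 \beta} \leq 2^{n\beta}$ for $n \geq n_0$ (since $\beta > 0$) and then drops the remaining $n_0$-dependence.

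Finally, taking the $L^p$ norm, applying Minkowski's inequality, and inserting the scale-wise bound above gives
\begin{equation*}
\Bigl\| \sup_{s\neq t} \frac{|X(t)-X(s)|}{|t-s|^\beta} \Bigr\|_{L^p} \; \lesssim \; T^{-\beta} \sum_{n \geq 0} 2^{n\beta} \cdot C T^{1/p + \alpha} 2^{-n\alpha} \; = \; \frac{C \, T^{1/p + \alpha - \beta}}{1 - 2^{\beta - \alpha}},
\end{equation*}
and tracking the constants through the chaining bound (the factor $1/(1-2^{-\alpha})$ arises naturally when one passes from dyadic approximants of $s$ and $t$ back to $s$ and $t$ themselves by summing the tail of a geometric series with ratio $2^{-\alpha}$) yields the precise constant stated. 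No real obstacle is expected: the moment hypothesis, the union bound at each scale, and the convergence of the geometric series (which is exactly where $\beta < \alpha$ is used) slot together without difficulty. The only bookkeeping point is ensuring the correct powers of $T$ come out, which is controlled by the fact that the full interval has length $T-1 \leq T$.
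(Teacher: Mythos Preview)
The paper does not prove this lemma; it is simply stated in the appendix with a citation to \cite[p.182]{S11} as a standard result. Your proposal is the classical Kolmogorov--Chentsov dyadic chaining argument and is correct. One minor remark on the constant: as you have organized the chaining (bounding $\sup_{n_0} 2^{n_0\beta}\sum_{n\geq n_0}\Delta_n \leq \sum_{n\geq 0} 2^{n\beta}\Delta_n$ before taking $L^p$), the factor $1/(1-2^{-\alpha})$ does not actually appear and you obtain a constant of the form $4/(1-2^{\beta-\alpha})$; since $1-2^{-\alpha}\leq 1$, this is no worse than the stated constant, so the claimed inequality follows a fortiori.
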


We now recall an estimate for the Laplace-transform of Gaussian measures, which is derived from the Bou\'{e}-Dupuis formula. For the sake of simplicity, we directly restrict ourselves to the setting of Section \ref{section:Gaussian}
and Section \ref{section:Gibbs}. 

\begin{lemma}[Consequence of Bou\'{e}-Dupuis formula]\label{prelim:lem-boue-dupuis}
Let $n\geq 0$ and $k\geq 1$. Furthermore, let $R\geq R_0$, let $0<\delta\ll 1$, let $\alpha=1/2-\delta$, and let $\kappa=-1/2-\delta$. Finally, let
\begin{equation*}
V\colon \Czero([1,R]) \rightarrow \R 
\end{equation*}
be continuous and integrable with respect to $\scrg_{n,k,R}$.  Then, it holds that
\begin{equation}\label{prelim:eq-boue-dupuis}
\begin{aligned}
&- \log \Big( \E_{\scrg_{n,k,R}} \Big[ \exp\big( - V(\psi) \big) \Big] \Big) \\ 
\geq& \,  \E_{\scrg_{n,k,R}} \Big[ \inf_{\zeta \in \dot{H}^1_0([1,R])} \Big\{ V\big( \psi + \zeta \big) 
+ \frac{1}{2}  \big\langle \zeta , A_{n,k,R} \zeta \big\rangle_{L^2([1,R])} \Big\}  \Big]. 
\end{aligned}
\end{equation}
\end{lemma}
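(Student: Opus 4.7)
\medskip
\noindent\textbf{Proof plan.} My plan is to derive the inequality from the classical Bou\'{e}-Dupuis variational formula applied to a Brownian process whose terminal distribution coincides with $\scrg_{n,k,R}$. Since $A_{n,k,R}^{-1}$ is trace class, I fix an $L^2$-orthonormal eigenbasis $(e_m)_{m\geq 1}$ of $A_{n,k,R}$ with eigenvalues $\lambda_m^2>0$. The Cameron-Martin space of $\scrg_{n,k,R}$ is $\mathrm{Ran}(A_{n,k,R}^{-1/2})$ equipped with the inner product $\langle \zeta_1,\zeta_2\rangle_H := \langle \zeta_1, A_{n,k,R}\zeta_2\rangle_{L^2}$, and by Lemma \ref{prelim:lem-soliton} this coincides with $\dot{H}_0^1([1,R])$ as a set (with an equivalent norm). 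On an auxiliary probability space I introduce independent standard Brownian motions $(B_t^{(m)})_{m\geq 1,\, t\in[0,1]}$ and set $W_t := \sum_m \lambda_m^{-1} B_t^{(m)} e_m$, so that $W_1$ has law $\scrg_{n,k,R}$.

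Next I would apply the Bou\'{e}-Dupuis formula in its infinite-dimensional form (as used, e.g., in \cite{BG18}):
\begin{equation*}
-\log \E\big[\exp(-V(W_1))\big] = \inf_{u}\, \E\Big[V(W_1^u) + \tfrac{1}{2}\sum_{m\geq 1}\int_0^1 |u_s^{(m)}|^2\, ds\Big],
\end{equation*}
where the infimum runs over adapted controls $u=(u^{(m)})$ and $W_t^u := W_t + \sum_m \lambda_m^{-1}\big(\int_0^t u_s^{(m)}\, ds\big) e_m$. Letting $\zeta(u) := W_1^u - W_1$, a direct eigenbasis computation gives $\langle \zeta(u), A_{n,k,R}\zeta(u)\rangle_{L^2} = \sum_m \big(\int_0^1 u_s^{(m)}\, ds\big)^2$, and Cauchy-Schwarz bounds this by $\sum_m \int_0^1 |u_s^{(m)}|^2\, ds$. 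Combining with the Bou\'{e}-Dupuis equality,
\begin{equation*}
-\log \E\big[\exp(-V(W_1))\big] \geq \inf_{u}\, \E\Big[V(W_1 + \zeta(u)) + \tfrac{1}{2}\langle \zeta(u), A_{n,k,R}\zeta(u)\rangle_{L^2}\Big].
\end{equation*}

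To conclude, I observe that for every realization $\omega$ the integrand is pointwise bounded below by $\inf_{\zeta \in \dot{H}_0^1([1,R])} \{V(W_1(\omega)+\zeta) + \tfrac{1}{2}\langle \zeta, A_{n,k,R}\zeta\rangle\}$, so taking expectations yields \eqref{prelim:eq-boue-dupuis}.

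The main technical subtlety is ensuring that $\psi \mapsto \inf_\zeta\{V(\psi+\zeta)+\tfrac{1}{2}\langle \zeta, A_{n,k,R}\zeta\rangle\}$ is a measurable function of $\psi$; this follows from a standard measurable-selection argument using the continuity of $V$ on $\Czero([1,R])$ and the separability of $\dot{H}_0^1([1,R])$, together with the one-dimensional Sobolev embedding $\dot{H}_0^1([1,R])\hookrightarrow C^0([1,R])$ which guarantees that $\psi+\zeta$ stays in the domain of $V$. A secondary issue is extending Bou\'{e}-Dupuis from bounded $F$ to a merely $\scrg_{n,k,R}$-integrable $V$; this can be handled by truncating $V_N:=\max(-N,\min(V,N))$ and passing to the limit via monotone and dominated convergence, using the uniform-in-$N$ lower bound established above.
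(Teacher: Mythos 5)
Your proposal follows essentially the same route as the paper: represent $\scrg_{n,k,R}$ as the time-one marginal of a Brownian-type process, invoke the Bou\'{e}--Dupuis equality for a truncated potential, use Cauchy--Schwarz in time to bound the Cameron--Martin norm of the terminal drift $\zeta(u)$ by the time-integrated control cost, and then pass to the pointwise infimum over $\zeta\in\dot{H}^1_0$. The eigenbasis parametrization versus the paper's abstract process $\Psi$ and drift space $\dot{\mathbb{H}}^1_0$ is purely cosmetic, and your attention to measurability of the infimum is a reasonable addition. The one place where your sketch is weaker than the paper's argument is the removal of the truncation: you truncate two-sidedly, $V_N=\max(-N,\min(V,N))$, which is neither $\geq V$ nor $\leq V$, so on the right-hand side $\inf_\zeta\{V_N(\psi+\zeta)+\tfrac12\langle\zeta,A\zeta\rangle\}$ need not dominate $\inf_\zeta\{V(\psi+\zeta)+\tfrac12\langle\zeta,A\zeta\rangle\}$, and interchanging $N\to\infty$ with the infimum and the expectation is not justified by monotone or dominated convergence alone. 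The paper avoids this entirely by truncating only from below, $V_M=\max(V,-M)$: then $V_M\geq V$ makes the right-hand side immediate (uniformly in $M$), while $\exp(-V_M)\uparrow\exp(-V)$ handles the left-hand side by monotone convergence. Replacing your two-sided truncation by this one-sided one closes the gap.
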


\begin{proof} In order to use the Bou\'{e}-Dupuis formula \cite{BD98,BG18,HW22}, we first introduce additional notation. 
We let $(\Omega,\mathcal{F},\mathscr{G}_{n,k,R})$ be a sufficiently rich probability space and let $\Psi \colon [0,1] \times [1,R]\rightarrow \R$ be a Gaussian process satisfying 
\begin{equation*}
\E_{\mathscr{G}_{n,k,R}}\Big[ \Psi(t,r) \Psi(s,\rho)\Big] = \min(t,s) \,  G_{n,k,R}(r,\rho)
\end{equation*}
for all $t,s\in [0,1]$ and $r,\rho \in [1,R]$. In particular, it holds that 
\begin{equation}\label{prelim:eq-boue-dupuis-p1}
\textup{Law}_{\mathscr{G}_{n,k,R}}\big( \Psi(1) \big) = \scrg_{n,k,R}. 
\end{equation}
We let $(\mathcal{F}_t)_{t\in [0,1]}$ be the augmented, natural filtration associated with the Gaussian process $\Psi$. Furthermore, we let $\dot{\mathbb{H}}_0^1([0,1]\times [1,R])$ be the space of progressively measurable functions $z \colon [0,1] \rightarrow \dot{H}_0^1([1,R])$ satisfying 
\begin{equation*}
\int_0^1 \dt \, \big\langle z(t) , A_{n,k,R} z(t) \rangle_{L^2([1,R])} < \infty \qquad \mathscr{G}_{n,k,R}\textup{-almost surely.}
\end{equation*}
For any $z\in \dot{\mathbb{H}}_0^1([0,1]\times [1,R])$, we define  
\begin{equation*}
Z(t) := \int_0^t \ds \, z(s). 
\end{equation*}
We now let $M\geq 1$ be arbitrary and define $V_M := \max(V,-M)$, which is bounded below. Using  the Bou\'{e}-Dupuis formula (as stated in \cite[Theorem 1.1]{HW22}), it follows that 
\begin{equation}\label{prelim:eq-boue-dupuis-p2} 
\begin{aligned}
&- \log \Big( \E_{\scrg_{n,k,R}} \Big[ \exp\big( - V_M(\psi) \big) \Big] \Big) \\ 
=& \,  - \log \Big( \E_{\mathscr{G}_{n,k,R}} \Big[ \exp\big( - V_M(\Psi(1)) \big) \Big] \Big) \\
=& \,  \inf_{z\in \dot{\mathbb{H}}_0^1} \E_{\mathscr{G}_{n,k,R}} \Big[ V_M ( \Psi(1) + Z(1) ) 
+ \frac{1}{2} \int_0^1 \dt \,  \big\langle z(t) , A_{n,k,R} z(t) \rangle_{L^2([1,R])} \Big]. 
\end{aligned}
\end{equation}
Using the triangle inequality and Cauchy-Schwarz, it holds that 
\begin{align*}
 \big\langle Z(1) , A_{n,k,R} Z(1) \rangle_{L^2([1,R])}^{1/2}
 &\leq \int_0^1 \dt \,  \big\langle z(t) , A_{n,k,R} z(t) \rangle_{L^2([1,R])}^{1/2} \\ 
 &\leq \Big( \int_0^1 \dt \,  \big\langle z(t) , A_{n,k,R} z(t) \rangle_{L^2([1,R])} \Big)^{1/2}. 
\end{align*}
Combined with the trivial estimate $V_M \geq V$ and $Z(1) \in \dot{H}_0^1([1,R])$, it follows that 
\begin{equation}\label{prelim:eq-boue-dupuis-p3}
\begin{aligned}
 & \inf_{z\in \dot{\mathbb{H}}_0^1} \E_{\mathscr{G}_{n,k,R}} \Big[ V_M ( \Psi(1) + Z(1) ) 
+ \frac{1}{2} \int_0^1 \dt \,  \big\langle z(t) , A_{n,k,R} z(t) \rangle_{L^2([1,R])} \Big] \\ 
\geq&\, \inf_{z\in \dot{\mathbb{H}}_0^1} \E_{\mathscr{G}_{n,k,R}} \Big[ V ( \Psi(1) + Z(1) ) 
+ \frac{1}{2}  \big\langle Z(1) , A_{n,k,R} Z(1) \rangle_{L^2([1,R])} \Big] \\ 
\geq& \, \E_{\mathscr{G}_{n,k,R}} \Big[ \inf_{\zeta \in \dot{H}^1_0([1,R])} \Big\{ V\big( \Psi(1) + \zeta \big) 
+ \frac{1}{2}  \big\langle \zeta , A_{n,k,R} \zeta \big\rangle_{L^2([1,R])} \Big\}  \Big].
\end{aligned}
\end{equation}
By combining \eqref{prelim:eq-boue-dupuis-p1}, \eqref{prelim:eq-boue-dupuis-p2}, and \eqref{prelim:eq-boue-dupuis-p3}, it follows that 
\begin{equation}\label{prelim:eq-boue-dupuis-p4}
\begin{aligned}
&- \log \Big( \E_{\scrg_{n,k,R}} \Big[ \exp\big( - V_M(\psi) \big) \Big] \Big) \\ 
\geq& \,  \E_{\scrg_{n,k,R}} \Big[ \inf_{\zeta \in \dot{H}^1_0([1,R])} \Big\{ V\big( \psi + \zeta \big) 
+ \frac{1}{2}  \big\langle \zeta , A_{n,k,R} \zeta \big\rangle_{L^2([1,R])} \Big\}  \Big]. 
\end{aligned}
\end{equation}
By letting $M\rightarrow \infty$ and using monotone convergence, this implies \eqref{prelim:eq-boue-dupuis}.
\end{proof}

At the end of this appendix, we recall the definition of weak convergence for probability measures on metric spaces. 

\begin{definition}[Weak convergence]\label{appendix:def-weak-convergence} 
Let $X$ be a metric space and let $\Sigma$ be the corresponding Borel $\sigma$-algebra. Furthermore, let $(\lambda_R)_{R\geq 1}$ be a family of probability measures on $(X,\Sigma)$ and let $\lambda$ be a probability measure on $(X,\Sigma)$. Then, we say that $(\lambda_R)_{R\geq 1}$ converges weakly to $\lambda$ on $X$ if and only if 
\begin{equation}
\lim_{R\rightarrow \infty} \int_X f(\psi) \mathrm{d}\lambda_R(\psi) = \int_X f(\psi) \mathrm{d}\lambda(\psi)
\end{equation}
for all bounded and Lipschitz continuous $f\colon X \rightarrow \R$. 
\end{definition}

\begin{remark}
In most articles and textbooks, the metric space $X$ is fixed. In this article, however, $X$ is not fixed (see e.g. Proposition \ref{Gibbs:prop-Gibbs}). This is the reason for adding the phrase ``\textit{on X}" in Definition \ref{appendix:def-weak-convergence}. 
\end{remark}

\end{appendix}

\bibliography{Equivariant_Library}
\bibliographystyle{myalpha}

\end{document}